\def\normo#1{\left\|#1\right\|}
\def\normb#1{\big\|#1\big\|}
\def\norm#1{\|#1\|}
\def\jb#1{\langle#1\rangle}
\def\wh#1{\widehat{#1}}
\newcommand{\R}{{\mathbb R}}
\newcommand{\Z}{{\mathbb Z}}
\newcommand{\ft}{{\mathcal{F}}}
\newcommand{\les}{{\lesssim}}
\newcommand{\ges}{{\gtrsim}}
\DeclareMathOperator{\supp}{supp}
\DeclareMathOperator{\diam}{diam}
\def\jb#1{\langle#1\rangle}
\def\norm#1{\|#1\|}
\def\normo#1{\left\|#1\right\|}
\def\normb#1{\big\|#1\big\|}
\def\wh#1{\widehat{#1}}
\newcommand{\e}{\varepsilon}
\newcommand{\I}{\infty}
\newcommand{\EQ}[1]{\begin{equation}\begin{split} #1 \end{split}\end{equation}}
\newcommand{\Del}[1]{}
\newcommand{\CAS}[1]{\begin{cases} #1 \end{cases}}
\numberwithin{equation}{section}
\newtheorem{theorem}{Theorem}[section]
  \newtheorem{proposition}[theorem]{Proposition}
  \newtheorem{lemma}[theorem]{Lemma}
  \newtheorem{corollary}[theorem]{Corollary}
\theoremstyle{remark}
\newtheorem{remark}[theorem]{Remark}
\theoremstyle{definition}
  \newtheorem{definition}[theorem]{Definition}
\begin{document}
\title[Scattering for mass-critical NLKG]{Scattering for the mass-critical nonlinear Klein-Gordon equations in three and higher dimensions}
\author[Cheng, Guo and Masaki]{Xing Cheng$^{*}$, Zihua Guo$^{**}$, and Satoshi Masaki$^{***}$}

\thanks{$^*$ College of Science, Hohai University, Nanjing 210098, Jiangsu, China. \texttt{chengx@hhu.edu.cn}}
\thanks{$^{**}$ School of Mathematical Sciences, Monash University, VIC 3800,  Australia. \texttt{Zihua.Guo@monash.edu}  }
\thanks{$^{***}$ Department of Systems Innovation, Graduate School of Engineering Science, Osaka University, Toyonaka Osaka, 560-8531, Japan. \texttt{masaki@sigmath.es.osaka-u.ac.jp}}
\thanks{$^{*}$    Xing Cheng has been partially supported by the NSF grant of China (No. 11526072).}	
\thanks{$^{**}$  Zihua Guo was supported by the ARC project (No. DP170101060).}
\thanks{$^{***}$ Satoshi Masaki was supported by JSPS KAKENHI Grant Numbers JP17K14219, JP17H02854, JP17H02851, and JP18KK0386.}

\begin{abstract}
In this paper we consider the mass-critical nonlinear Klein-Gordon equations in three and higher dimensions.
We prove the dichotomy between scattering and blow-up below the ground state energy in the focusing case,
and the energy scattering in the defocusing case.
We use the concentration-compactness/rigidity method developed by C. E. Kenig and F. Merle. The main novelty from the work of R. Killip, B. Stovall, and M. Visan [Trans. Amer. Math. Soc. {\bf364} (2012)]
 is to approximate the large scale (low-frequency) profile by the solution of the mass-critical nonlinear Schr\"odinger equation when the nonlinearity is not algebraic.
\bigskip

\noindent \textbf{Keywords}: Klein-Gordon equations, well-posedness, scattering, profile decomposition, large scale profile.
\bigskip

\noindent \textbf{Mathematics Subject Classification (2020)} 35L71, 35Q40, 35P25, 35B40

\end{abstract}
\maketitle

%\tableofcontents

\section{Introduction}\label{se1}
In this paper, we consider the scattering problem for the mass-critical nonlinear Klein-Gordon equations (NLKG) on $\mathbb{R}^d$:
\begin{equation}\label{eq1.1}
\begin{cases}
- \partial_t^2 u + \Delta u - u = \mu |u|^\frac4d u ,\\
u(0,x) =u_0(x),\\
\partial_t u(0,x) =u_1(x) ,
\end{cases}
\end{equation}
where $u: \mathbb{R}\times \mathbb{R}^d \to \mathbb{R}$, $d\ge 3$, in both the defocusing case ($\mu=1$) and focusing case ($\mu=-1$). The NLKG equation
 is a fundamental model in mathematical physics and has been extensively studied in a large amount of literatures, for example, see \cite{NS2,Str,T2} and references therein. A major effort was recently devoted to the scattering problem.

An important class of nonlinearity is the power type nonlinearity $\mu |u|^{p-1} u$, where $p > 1$. 
Although the NLKG equation with the power type nonlinearity do not have a scaling structure, we can find that in the massless case, that is for
the corresponding wave equation, it has the scaling structure $u(t,x) \mapsto \lambda^{- \frac2{p-1}}u (\lambda t, \lambda x)$.
The scaling leaves the $\dot{H}^{s_c}_x$-norm invariant, where $s_c = \frac{d}2-\frac{2}{p-1}$. As blow-up is associated with the small spatial scale that is when $\lambda \to \infty$ and the mass term shrinks to 0 under this scaling. Therefore, it is natural to view $s_c$ as the critical regularity of the NLKG. In general, there are two critical indices for $p$: mass-critical index $p = 1 + \frac4d$ and energy-critical index $p = 1 + \frac4{d-2}$ when $d \ge 3$.
These two indices correspond to $s_c=0$ and $s_c=1$, respectively.
On the global dynamics there are many studies: for defocusing inter-critical cases $1+\frac4d<p<1+\frac4{d-2}$ (\cite{GV,GV1,N-1}), defocusing energy-critical cases (\cite{N-2}) and focusing inter-critical and energy-critical cases (\cite{IMN,IMN1,In,KNS,NS1,NS2,NS4}).
For mass critical cases, energy scattering was studied by R. Killip, B. Stovall, and M. Visan \cite{KSV1} for the two dimensional case and recently by M. Ikeda, T. Inui, and M. Okamoto \cite{IIO} for the one dimensional case.
The two works used the concentration-compactness/rigidity method developed by Kenig-Merle \cite{KM,KM1}.

The purpose of this paper is to study the mass-critical NLKG equations and prove energy scattering in three and higher dimensions. The mass-critical NLKG equation \eqref{eq1.1} has a conservation of energy
\begin{align*}
E\left(u,\partial_t u\right) :=
  \int_{\mathbb{R}^d} \frac12 |\partial_t u(t,x)|^2 + \frac12 |\nabla u(t,x)|^2 + \frac12 |u(t,x)|^2 + \frac{\mu d}{2(d+2)} |u(t,x)|^\frac{2(d+2)}d\, \mathrm{d}x,
\end{align*}
and also a conservation of momentum
\begin{align*}
P\left(u, \partial_t u \right) : =   \int_{\mathbb{R}^d} \partial_t u \cdot \nabla u \,\mathrm{d}x.
\end{align*}
Thus a natural phase space for NLKG is the energy space $H^1\times L^2$.

In the defocusing case, the conserved energy  immediately gives us the global existence of solutions.
%It will turn out that such a solution scatters in both time directions.
%In the defocusing case, global well-posedness in energy space follows easily from the local well-posedness and the energy conservation law.
On the other hand, in the focusing case, there is a global-existence/blowup dichotomy.
The ground state solution,  a static solution $u(t,x)=Q(x)$ to the NLKG equation plays a crucial role in the dichotomy.
Here, $Q(x)\in H^1$ is a positive radial solution to the nonlinear elliptic equation
\begin{align}\label{eq1.4}
\Delta Q - Q = - Q^{1 + \frac4d}.
\end{align}
%It is easy to see $Q$ is a static solution to \eqref{eq1.1}.
Global well-posedness vs blow-up for the solutions under $E(u,u_t)< E(Q, 0 )$ was given essentially in \cite{PS}, where the threshold of $\norm{u_0}_{L^2}$
is used to discriminate the solutions.  More precisely, one has global well-posedness when $\norm{u_0}_{L^2} <  \norm{Q}_{L^2}$ and blowup when  $\norm{u_0}_{L^2}>\norm{Q}_{L^2}$.
However, in both defocusing and focusing cases, scattering needs more effort due to the mass-criticality.
The main result of this paper is to establish the scattering for the global solutions.
\begin{theorem}\label{th1.4}
Assume $(u_0,u_1) \in H_x^1(\R^d) \times L_x^2(\R^d)$, $d\geq 3$. We have

(i) if $\mu=1$, then the global solution to \eqref{eq1.1} scatters in energy space in both time directions,
that is, there exist $u_\pm \in C_t^0 H_x^1 \cap C^1_t L_x^2 $ which are the solutions of the linear Klein-Gordon equation such that
\begin{align*}
\left\| u(t) - u_\pm(t) \right\|_{H_x^1} +  \left\| \partial_t u(t) - \partial_t u_\pm(t) \right\|_{L_x^2} \to 0,\quad \text{ as } t \to \pm \infty.
\end{align*}

(ii) if $\mu=-1$, we assume further $E(u_0,u_1) < E(Q, 0)$, then the solution $u$ to \eqref{eq1.1} exists globally and scatters in the energy space when $\norm{u_0}_{L^2} <  \norm{Q}_{L^2}$; and it blows up in finite time  when $\norm{u_0}_{L^2}>\norm{Q}_{L^2}$. Also, the case when $\|u_0 \|_{L^2 } = \|Q\|_{L^2}$ is impossible. 
\end{theorem}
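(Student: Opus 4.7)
The plan is to follow the Kenig--Merle concentration-compactness/rigidity scheme as implemented in the two-dimensional case by Killip--Stovall--Visan \cite{KSV1}, adapting it to $d\ge 3$. The blow-up half of (ii) is a Payne--Sattinger-type obstruction: the condition $E(u,\partial_t u)<E(Q,0)$ is conserved, and by the variational characterization of $Q$ the sign of $\mathcal{K}_0(u(t))$ is preserved along the flow, so when $\mathcal{K}_0(u_0)<0$ a convexity argument applied to $\int|u|^2\,dx$ (or a suitable variant) yields finite-time blow-up. The core of the theorem is therefore the scattering statement in (i) and in (ii) under $\mathcal{K}_0(u_0)\ge 0$. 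I would treat these uniformly by introducing the variational energy threshold $E_c$ below which scattering holds and arguing by contradiction that $E_c=+\infty$ in the defocusing case and $E_c\ge E(Q,0)$ in the focusing case.

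The first technical layer is the small-data/perturbative theory in a scattering norm of Strichartz type, together with a long-time stability lemma that is robust enough to handle the non-algebraic power $|u|^{4/d}u$ for $d\ge 3$. With these in place, a linear profile decomposition for bounded sequences in $H_x^1\times L_x^2$ adapted to the Klein--Gordon propagator, combined with a Palais--Smale extraction at the threshold $E_c$, produces a minimal non-scattering solution whose orbit modulo space translations is precompact in the energy space.

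The decisive new step is the classification and approximation of the profiles in the decomposition. Profiles of bounded frequency and bounded scale are handled via the standard nonlinear Klein--Gordon stability theory together with the scattering assumed below $E_c$. The delicate case is the large-scale (low-frequency) profiles: after a suitable rescaling the Klein--Gordon flow enters the non-relativistic regime, and factoring out the oscillation $e^{it}$ one finds that the envelope is formally governed by the mass-critical nonlinear Schr\"odinger equation. One can then invoke Dodson's scattering theorem for mass-critical NLS to produce a global approximate solution with finite scattering norm. When $d=2$ the nonlinearity $|u|^2u$ is algebraic and this formal derivation is easy to justify; for $d\ge 3$ the power $|u|^{4/d}u$ is only H\"older continuous at the origin, so the approximation must be made rigorous through fractional chain and product rules, paraproduct decompositions, and carefully frequency-localized error estimates. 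This is exactly the novelty advertised in the abstract, and in my view the principal obstacle. A nonlinear superposition/orthogonality argument then rules out the coexistence of several profiles and reduces the minimal blow-up solution to a single bubble.

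The final rigidity step excludes this minimal element. In the defocusing case a truncated Morawetz inequality, combined with the precompactness of the orbit and conservation of energy, forces the solution to be identically zero, contradicting the non-scattering hypothesis. In the focusing case the coercivity provided by $\mathcal{K}_0\ge 0$ under $E<E(Q,0)$ plays the role of the Morawetz positivity inside a localized virial identity, producing the same contradiction. I expect the NLS limit for the non-algebraic nonlinearity, and the corresponding sharp stability lemma on the Klein--Gordon side, to be the main hurdles; the remaining steps in the profile decomposition, Palais--Smale reduction, and rigidity are by now comparatively standard once the perturbative machinery is available.
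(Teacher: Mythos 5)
Your overall architecture matches the paper's: the Kenig--Merle reduction to a minimal blow-up solution via a profile decomposition adapted to the Klein--Gordon propagator, approximation of large-scale profiles by mass-critical NLS solutions so that Dodson's theorem can be invoked, a Palais--Smale extraction at $E_c$, and a virial-type rigidity step; and the blow-up half of (ii) by Payne--Sattinger convexity. Two points are worth flagging, one minor and one substantive.

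The minor point is that the refined Strichartz estimate underlying the inverse Strichartz/profile decomposition for $d\ge 3$ is itself new input here: the paper feeds in the bilinear restriction estimate of Candy and Herr (Lemma~\ref{le4.5}) and then runs a tube decomposition (Definition~\ref{de4.2}, Corollary~\ref{co4.6}). Your proposal treats the profile decomposition as "by now comparatively standard," which glosses over where the dimensional restriction and the needed bilinear gain actually come from; the $d=1$ case uses a different bilinear estimate (Ozawa--Rogers type).

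The substantive gap is in your diagnosis of the large-scale profile approximation. You attribute the difficulty for $d\ge 3$ to the H\"older regularity of $z\mapsto |z|^{4/d}z$ at the origin and propose to fix it by fractional chain rules, paraproducts, and frequency-localized estimates. That is not the obstruction the paper addresses, and those tools would not resolve it. The actual issue is a time-oscillation averaging problem: after the ansatz $\tilde v_n(t)=e^{-it}D_{\lambda_n}w_n(t/\lambda_n^2)$, the nonlinearity becomes $f\big(\Re(e^{-it}w_n)\big)=f\big(\tfrac12(e^{-it}w_n+e^{it}\overline{w_n})\big)$, which contains a rapid phase. Because $f$ is not a polynomial, one cannot simply expand and read off the resonant term; one must average over the fast phase, and that average produces the specific constant
$C_d=\tfrac1{2^{2+4/d}\pi}\int_0^{2\pi}f(1+e^{i\theta})\,d\theta$,
which is the coefficient in the limiting mass-critical NLS (Remark~\ref{re5.1v90}). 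The remaining nonresonant piece $e_{3,n}$ must then be shown to be $o(1)$ in the Duhamel sense; the paper does this either by a generalized integration by parts in the time variable (exploiting the factor $\lambda_n^{-2}$ gained by differentiating the phase, see the treatment of $\mathcal{E}_{3,n}$ and \eqref{le6.9}), or, in Appendix~A.2, by a Fourier series expansion $|\Re u|^{4/d}\Re u=\sum_k g_{2k-1}|u|^{4/d+2-2k}u^{2k-1}$ with $g_1=C_d$ and summable tails, isolating each non-resonant frequency. Neither mechanism is captured by paraproducts or a fractional chain rule, which act in $x$ and have no handle on the $t$-phase. Without identifying $C_d$ and controlling the oscillatory remainder, the approximation by NLS -- and thus the whole argument -- does not close.
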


To prove the scattering part of the above theorem, we use the ``Kenig-Merle roadmap'' as in \cite{KM,KM1} and \cite{KSV1}.
Our main technical development lies in the linear and nonlinear profile decomposition in higher dimensions.
This is a key tool to prove  the existence of a non-scattering solution with the minimal energy, which is so-called a \emph{minimal energy blow-up solution}.

First, due to the mass-criticality, we need to establish the linear profile decomposition associated to the linear Klein-Gordon equation in higher dimensions at the $L^2$-critical level.  More precisely, we need to characterise the defect of the compactness of the Strichartz estimate
\EQ{
\normo{e^{it\jb{\nabla}}f}_{L_{t,x}^{2+\frac{4}{d}}(\R\times \R^d)}\les \norm{f}_{H^1}.
}
Since the right hand side can be replaced by the $H^{1/2}$-norm (see Remark \ref{re:H1/2}, below),
assuming bounded data in $H^1$, we can handle the high frequency easily due to the room of regularity. For the low frequency, it is much more complicated. This can be seen by the fact that the low frequency limit of Klein-Gordon equation is indeed the Schr\"odinger equation.
In fact, for any $\varphi \in H^1$, we have
 %In fact, we have for $0 < \theta \ll 1$, $\varphi \in L^2$,
\EQ{
e^{ it\lambda^2}e^{- it\lambda^2\jb{\lambda^{-1}\nabla}} \varphi  \to e^{i\frac{t}{2}\Delta} \varphi \text{ in $H^1$}, \text{ as } \lambda\to \infty,
}
by the asymptotic expansion
\begin{align}\label{eq1.4v138}
\lambda^2 \left( \langle \lambda^{-1} \xi \rangle -1 \right) = \tfrac12 |\xi|^2 + O\left( \lambda^{-2} |\xi|^4 \right), \text{ as } \lambda \to \infty.
\end{align}
Thus we have to take into account more symmetries for low frequency.
In this example, the Fourier transform concentrates at the origin.
The Lorentz boost is also involved when the Fourier transform concentrates to another point.
We will rely on some refined Strichartz estimates which are derived by the bilinear Strichartz estimates. We slightly simplify the argument in \cite{KSV1} (See Remark \ref{rem:improve}).

Second, we use the nonlinear profile decomposition to construct the minimal mass non-scattering solutions.  For this step, the mass-critical NLS
\EQ{\label{eq:mcNLS}
i\partial_t w+ \frac12 \Delta w  = \mu C_d |w|^{\frac{4}d}w
}
serves as approximate equation to that determines the long-time behaviour to the large scale nonlinear profile of the NLKG equation, where
\[
	C_d := \tfrac{\Gamma\left( \frac2d + \frac32 \right) }{  \sqrt{\pi} \Gamma\left( \frac2d + 2\right)}.
\]
The connection between \eqref{eq1.1} and \eqref{eq:mcNLS} in the scattering problem is previously studied.
K. Nakanishi \cite{N1} proved that the scattering of the NLKG implies the scattering of the corresponding NLS equation.  Conversely, R. Killip, B. Stovall, and M. Visan \cite{KSV1} used the scattering results of the mass-critical NLS equation to show the scattering of NLKG in two dimension.  This was extended to one dimensional case by M. Ikeda, T. Inui, and M. Okamoto in \cite{IIO}. Unlike the one- and two-dimensional cases, some new difficulty is caused by the fact that the power of the nonlinear term is of fractional order in the higher dimensions.  The limit NLS equation is not as obvious as  in one and two dimensional case.
The difficulty can be summarized  as specifying the constant $C_d$.
By the technique developed by the third author and his collaborators \cite{MMU,MS0,MS,MSU}, we can overcome this difficulty. In these works, they
%developed some tools to deal with the nonlinear dispersive equations with non-algebraic nonlinearity. They
introduce an expansion of homogeneous nonlinearity to pick up the resonant term from the non-algebraic nonlinear term.  By these ideas we derive the limit NLS equation and then use it to construct the minimal energy blow-up solutions.
Note that a similar technique was developed in \cite{MNO,MN,N,N1}.

Finally, in the rigidity part, we exclude the existence of the critical element by a virial type monotonicity argument.
This part is done by an argument in \cite{IMN,KSV1}. We give a proof of this part for self-containedness.

\section{Preliminary}\label{se2v90}

\subsection{Definition and notations}

We use $C$ to denote some universal constant which may change from line to line.
For $X, Y \in \mathbb{R}$, $X\lesssim Y$
means that there exists a constant $C$ such that $X\leq CY$, similarly for $X\ges Y$.  We use $\les_{A,\epsilon}$ and $\ges_{A,\epsilon}$  to indicate that the implicit constant depends on $A,\epsilon$.  $X\sim Y$ means $X\les Y$ and $X\ges Y$.  For $a\in \R$,
$a+$ (resp. $a-$) denotes  $a+\varepsilon$ (resp $a-\varepsilon$) for any sufficiently small $\varepsilon>0$, and $\langle a\rangle =(1+|a|^2)^{\frac12}$.

For a function $f\in L_{loc}^1(\R^d)$, we use $\wh f$ or $\ft(f)$ to denote the spatial Fourier transform of $f$:
$\mathcal{F}(f) (\xi) = \hat{f}(\xi) =(2 \pi)^{-\frac{d}2 } \int_{\R^d}e^{- i x \xi} f(x) \,\mathrm{d}x$.
Let $\varphi\in C_0^\infty(\R)$ be a real-valued, non-negative, even, and radially-decreasing function such that
\begin{align*}
\varphi(\xi) =
\begin{cases}
1, |\xi| \leq 1, \\
0, |\xi| \ge \frac54,
\end{cases}
\end{align*}
and define $\chi(\xi)=\varphi(\xi)-\varphi(2\xi)$. For a dyadic number $N\in 2^{\Z_+}$, we define the Littlewood-Paley projectors:
 $\widehat{P_1f}(\xi):=\varphi(\xi)\widehat{f}(\xi)$ and for $N\geq 2$,
\EQ{
\widehat{P_Nf}(\xi):=\chi\left(\frac{\xi}{N} \right)\widehat{f}(\xi), \quad \widehat{P^{\pm}_Nf}(\xi):=\chi\left(\frac{\xi}{N} \right)1_{\pm \xi\geq 0}\cdot \widehat{f}(\xi).
}
For $\Omega\subseteq \R^d$, we also define the Littlewood-Paley projector $P_\Omega =\ft^{-1}1_\Omega(\xi)\ft$.  We define the Fourier multiplier $m(\nabla)=\ft^{-1}m(\xi)\ft$. In particular, $\jb{\nabla}$ (resp. $D^s$) is the Fourier multiplier with symbol $\jb{\xi}=(1+|\xi|^2)^{\frac12}$ (resp. $|\xi|^s$).

We use $L^p$ to denote the Lebesgue space with a norm $\norm{\cdot}_{p}:=\norm{\cdot}_{L^p}$ and $L^p_tL_x^q$ to denote the mixed norm Lebesgue space with $\norm{f}_{L^p_tL_x^q}=\normb{\norm{f(t,\cdot)}_{L_x^q}}_{L_t^p}$. $H^s$ (and $\dot H^s$) denotes the standard (homogeneous) Sobolev space.

It is convenient for us to rewrite \eqref{eq1.1} into the first order. Let $v = u + i \langle \nabla \rangle^{-1} \partial_t u$, then the equation for $v$ is
\begin{align}\label{eq2.2}
\begin{cases}
i \partial_t v -  \langle \nabla \rangle v =  \mu \langle \nabla \rangle^{-1}\left(  \left|\Re v\right|^{\frac{4}d } \Re v \right),\\
v(0,x) = v_0(x) \in H^1(\mathbb{R}^d),
\end{cases}
\end{align}
We will use these two equivalent forms interchangeably.   We use $u$ to denote solution of \eqref{eq1.1} and $v$ the corresponding solution of \eqref{eq2.2}, the scattering norms and energies are defined to be
\begin{align*}
S_I(u) & = S_I(v) = \int_{I}\int_{\mathbb{R}^d} \left|\Re v(t,x)\right|^{\frac{2(d+2)}d}\,\mathrm{d}x \mathrm{d}t,\\
E(u(t)) & = E(v(t)) = \int_{\mathbb{R}^d} \frac12 \left|\langle \nabla_x  \rangle v(t,x)\right|^2 + \mu \frac{d }{2(d+2)} \left|\Re v(t,x)\right|^\frac{2(d+2)}d \,\mathrm{d}x.
\end{align*}

\subsection{Well-posedness theory}\label{se3}
In this subsection we collect some Strichartz estimates and well-posedness results that will be used in this paper.
First we recall the dispersive estimate for the Klein-Gordon propagator (see e.g. \cite{Br,GV,GV1,MSW}).
\begin{lemma} For any dyadic number $N\geq 1$, we have
\begin{align*}
\normo{e^{it\jb{\nabla}}P_N f}_{L_x^\infty(\R^d)}\les & |t|^{-\frac{d}{2}} N^{\frac{d+2}{2}}\norm{f}_{L_x^1},
\intertext{ and }
\normo{e^{it\jb{\nabla}}P_N f}_{L_x^\infty(\R^d)}\les & |t|^{-\frac{d-1}{2}} N^{\frac{d+1}{2}}\norm{f}_{L_x^1}.
\end{align*}
\end{lemma}
By the above dispersive estimate, we can get the Strichartz estimate.
The Strichartz estimate of the Klein-Gordon equation has been studied in many literatures, see \cite{Br,GV,GV1,IMN,MSW} and the references therein. In general, the Klein-Gordon propagator behaves like wave for high frequency and Sch\"odinger for low frequency.
\begin{definition}
We say that a pair $(q,r)$ is wave-admissible if
\EQ{
2 \le q, r \le \infty, \,  \frac1q \le \frac{d-1}2 \left(\frac12 - \frac1r \right), \, (q,r,d)\neq (2,\infty, 3);
}
and Schr\"odinger-admissible if
\EQ{
2 \le q, r \le \infty, \,  \frac1q \le \frac{d}2 \left(\frac12 - \frac1r \right), \, (q,r,d)\neq (2,\infty, 2).
}
If the equality holds, then we say $(q,r)$ is sharp wave (or Schr\"odinger)-admissible.
\end{definition}
\begin{lemma}[Homogeneous Strichartz estimate]
Assume $(q,r)$ is Schr\"odinger-admissible. We have
\EQ{
\normo{e^{it\jb{\nabla}}P_N f}_{L_t^qL_x^r(\R\times \R^d)}\les   & N^{\beta(q,r)}\norm{f}_{L_x^2}
}
where
\EQ{
\beta(q,r)=
\CAS{
\frac{d+2}{2} \left(\frac{1}{2}-\frac{1}{r} \right), \quad \mbox{if $(q,r)$ is sharp Schr\"odinger-admissible}, \\
\frac{d}{2}-\frac{d}{r}-\frac{1}{q}, \quad \mbox{ if $(q,r)$ is wave-admissible}.
}
}
By interpolation, we can obtain the Strichartz estimate for $(q,r)$ between wave-admissible and sharp Schr\"odinger-admissible.
\end{lemma}
\begin{remark}\label{re:H1/2}
In particular, $\left(2+\frac{4}{d-1}, 2+\frac{4}{d-1} \right)$ is sharp wave-admissible and $\left(2+\frac{4}{d}, 2+\frac{4}{d} \right)$ is sharp Schr\"odinger-admissible. We have for $d\geq 2$,
\begin{align}\label{eq2.8v161}
\normo{e^{it\jb{\nabla}}f}_{L_{t,x}^{2+\frac{4}{d-1}}(\R\times \R^d)} + \normo{e^{it\jb{\nabla}}f}_{L_{t,x}^{2+\frac{4}{d}}(\R\times \R^d)}
\les  & \norm{f}_{H^{ \frac12}(\R^d)}.
\end{align}
\end{remark}
By the duality, we have
\begin{lemma}[Inhomogeneous Strichartz estimate]\label{le2.3}
Assume $v$ and $G$ satisfy the following equations on the time interval $I\subseteq \mathbb{R}$,
\begin{align*}
i \partial_t v - \langle \nabla \rangle v = \langle \nabla \rangle^{-1} G.
\end{align*}
Then
\begin{align*}
& \left\|\langle \nabla \rangle^{ 1 + \frac{d+2}2 \left( \frac1r - \frac12 \right)}
v \right\|_{L_t^q L_x^r(I\times \mathbb{R}^d)} \lesssim \left\|\langle \nabla \rangle v(t_0) \right\|_{L^2(\mathbb{R}^d)}
+ \left\|\langle \nabla \rangle^{\frac{d+2}2 \left( \frac12 - \frac1{\tilde{r}} \right)} G \right\|_{L_t^{\tilde{q}' } L_x^{\tilde{r}'}(I \times \mathbb{R}^d)}
\end{align*}
for each $t_0 \in I$ and any sharp Schr\"odinger-admissible pairs $(q,r)$ and $\left(\tilde{q}, \tilde{r}\right)$.
\end{lemma}
For the low frequency component, the Klein-Gordon propagator behaves like Schr\"odinger equation.  By applying the bilinear restriction estimate of \cite{T1} as in \cite{KSV1} (see \cite{Killip-Visan1}), we obtain the following refined Strichartz estimate which is the same as the Schr\"odinger equation.
\begin{lemma}[Refined Strichartz] \label{co4.8}
$\forall\, f \in L_x^2(\mathbb{R}^d)$ and $supp \hat{f} \subseteq \left\{ |\xi | \le 2^d \right\}$, we have
\EQ{\label{eq2.8}
\left\|e^{-it \langle \nabla \rangle} f \right\|_{L_{t,x}^\frac{2(d+2)}d(\mathbb{R}\times \mathbb{R}^d)} \les& \|f\|_{L_x^2}^\frac{d+1}{d+2} \left( \sup\limits_{ \mathcal{C} } | \mathcal{C} |^{-\frac{d+1}{2(d^2 + 3d + 1)}} \left\|e^{-it\langle \nabla \rangle} P_{\mathcal{C}}  f \right\|_{L_{t,x}^\frac{2 \left(d^2 + 3d + 1 \right)}{d^2} (\mathbb{R}\times \mathbb{R}^d)} \right)^\frac1{d+2}.
}
where the supremum is taken over all dyadic cubes $\mathcal{C}$ with side length no more than $2^{d+1}$, and $P_{\mathcal{C}} f$ is the Fourier restriction of $f$ to $ \mathcal{C}$.

As a consequence, by interpolation, 
%\EQ{
%\norm{f}_{L_{t,x}^\frac{2 \left(d^2 + 3d + 1 \right)}{d^2} (\mathbb{R}\times \mathbb{R}^d)}\les \norm{f}^{\frac{d^2+2d}{d^2+3d+1}}_{L_{t,x}^\frac{2(d+2)}d}\cdot\norm{f}^{\frac{d+1}{d^2+3d+1}}_{L_{t,x}^\infty},
%}
we obtain
\begin{align}\label{eq2.11v165}
\left\|e^{-it \langle \nabla \rangle} f \right\|^{\frac{d^2+2d+1}{d^2+3d+1}}_{L_{t,x}^\frac{2(d+2)}d(\mathbb{R}\times \mathbb{R}^d)} \les& \|f\|_{L_x^2}^\frac{d+1}{d+2} \left( \sup\limits_{ \mathcal{C} } | \mathcal{C} |^{-\frac{d+1}{2(d^2 + 3d + 1)}} \left\|e^{-it\langle \nabla \rangle} P_{\mathcal{C}}  f \right\|_{L_{t,x}^\infty}^{\frac{d+1}{d^2+3d+1}} \right)^\frac1{d+2}.
\end{align}
\end{lemma}
\begin{remark}\label{rem:improve}
In the two dimensional case \cite{KSV1}, the combination of
the cube decomposition \eqref{eq2.8}
and a tube-type decomposition is used to obtain the inverse Strichartz estimate.
It will turn out that the decomposition \eqref{eq2.11v165} is sufficient for this purpose.
\end{remark}
By the Strichartz estimate and Picard's iteration, we can establish the well-posedness theory for \eqref{eq2.2}.
\begin{proposition}[Local well-posedness in $H^1$]\label{pr3.1}
For any $v_0 \in H_x^1(\mathbb{R}^d)$, there exists a unique maximal-lifespan solution $v: I \times \mathbb{R}^d \to \mathbb{C}$ to \eqref{eq2.2} with $v(0) = v_0$ satisfying $S_J(v)<\infty$ for any $J\Subset I$. Moreover, we have
\begin{enumerate}
\item $|I|\geq C(\norm{v_0}_{H^1})$. If $I=\R$ and $S_{\mathbb{R} }(v) < \infty$,  then $v$ scatters in $H^1$.

\item If $\|v_0\|_{H_x^1}$ is small enough, then $I=\R$ and $S_{\mathbb{R}}(v) \lesssim 1$.

\item If $J\subseteq I$ and $S_J(v) < L$, then for any $0 \le s <1 + \frac4d$, we have
\begin{align}\label{eq3.3}
\left\|\langle \nabla \rangle^{s  +  \frac{d+2}2 \left( \frac1r - \frac12 \right)} v\right\|_{L_t^q L_x^r(J\times \mathbb{R}^d)} \lesssim \left\|\langle \nabla \rangle^{s } v_0 \right\|_{L_x^2},
\end{align}
where $(q,r)$ is sharp Schr\"odinger-admissible.
\end{enumerate}
\end{proposition}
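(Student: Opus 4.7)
The plan is to recast \eqref{eq2.2} as the Duhamel fixed-point problem
\begin{equation*}
v(t) = e^{-it\langle \nabla \rangle} v_0 - i \mu \int_0^t e^{-i(t-s)\langle \nabla\rangle} \langle \nabla \rangle^{-1}\bigl(|\Re v(s)|^{4/d}\Re v(s)\bigr) \, ds,
\end{equation*}
and to run a Banach contraction argument in the resolution space
\begin{equation*}
\|v\|_{X_I} := \|v\|_{L_t^\infty H_x^1(I)} + \bigl\|\langle \nabla \rangle^{1/2} v\bigr\|_{L_{t,x}^{2(d+2)/d}(I)}.
\end{equation*}
The second norm is exactly the scattering norm produced by Lemma \ref{le2.3} with the diagonal sharp Klein--Gordon admissible pair $(q,r)=(\tilde q,\tilde r)=(\tfrac{2(d+2)}{d},\tfrac{2(d+2)}{d})$, for which the derivative loss $1+\tfrac{d+2}{2}(\tfrac1r-\tfrac12)$ equals $\tfrac12$.

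The nonlinearity is controlled by a Christ--Weinstein fractional chain rule followed by H\"older:
\begin{equation*}
\bigl\|\langle \nabla \rangle^{1/2}\bigl(|\Re v|^{4/d}\Re v\bigr)\bigr\|_{L_{t,x}^{2(d+2)/(d+4)}}
\lesssim \|\Re v\|_{L_{t,x}^{2(d+2)/d}}^{4/d}\bigl\|\langle \nabla \rangle^{1/2}\Re v\bigr\|_{L_{t,x}^{2(d+2)/d}},
\end{equation*}
which is superlinear in $\|v\|_{X_I}$. Feeding this into Lemma \ref{le2.3} makes the Duhamel map a contraction on a small ball of $X_I$ provided the free flow $e^{-it\langle \nabla\rangle}v_0$ has small scattering norm on $I$. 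For any $v_0\in H^1$ the full-line Strichartz norm of $e^{-it\langle\nabla\rangle}v_0$ is finite, so absolute continuity furnishes a short interval around $0$ on which this smallness holds; this yields local existence and uniqueness on a maximal lifespan by standard continuation.

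Parts (i) and (ii) then follow by standard Strichartz bookkeeping. For (ii), small $\|v_0\|_{H^1}$ makes the free-flow scattering norm globally small, so the contraction is global; the bound $S_\R(v)\lesssim E(v)^2$ follows from $E(v)\sim \|v\|_{H^1}^2$ (via Gagliardo--Nirenberg, Theorem \ref{th2.11}, to absorb the subcritical contribution in the focusing case, and trivially in the defocusing case) combined with Strichartz. For (i), assuming $S_\R(v)<\infty$, I partition $\R$ into finitely many intervals on each of which the scattering norm is small, iterate the nonlinear estimate to produce $\|\langle\nabla\rangle^{1/2}v\|_{L_{t,x}^{2(d+2)/d}(\R)}<\infty$, and then dominate the tail of the Duhamel integral with the same estimate to obtain Cauchy convergence of $e^{it\langle\nabla\rangle}v(t)$ in $H^1$ as $t\to\pm\infty$.

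Finally (iii) is a persistence-of-regularity bootstrap: given $S_I(v)<L$, partition $I=\bigcup_{j=1}^{N(L)} I_j$ with $\|\Re v\|_{L_{t,x}^{2(d+2)/d}(I_j)}$ as small as needed, and on each $I_j$ apply Lemma \ref{le2.3} to the $\langle\nabla\rangle^{s}$-derivative of the Duhamel formula, combined with a fractional chain rule
\begin{equation*}
\bigl\|\langle\nabla\rangle^{s+1/2}\bigl(|\Re v|^{4/d}\Re v\bigr)\bigr\|_{L_t^{\tilde q'}L_x^{\tilde r'}}
\lesssim \|\Re v\|_{L_{t,x}^{2(d+2)/d}}^{4/d}\bigl\|\langle\nabla\rangle^{s+1/2} v\bigr\|_{L_t^{q}L_x^{r}}
\end{equation*}
for a well-chosen (generally off-diagonal) sharp Klein--Gordon admissible pair. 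The bootstrap then closes subinterval by subinterval and produces \eqref{eq3.3}. The main obstacle, and the origin of the upper bound $s<\min(1+\tfrac{4}{d},\,\tfrac12+\tfrac{5d+4}{d^{2}})$, is exactly this fractional chain rule for the non-algebraic nonlinearity: for $d\ge 5$ the map $u\mapsto |u|^{4/d}u$ is merely $C^1$ with H\"older-$(4/d)$ derivative, which forces $s<1+\tfrac{4}{d}$; the second constraint $\tfrac12+\tfrac{5d+4}{d^{2}}$ arises from optimizing the choice of the dual admissible pair $(\tilde q,\tilde r)$ so that a Sobolev product estimate on $|u|^{4/d}u$ in $L_t^{\tilde q'}L_x^{\tilde r'}$ still closes, and it becomes binding precisely when the H\"older constraint is slack. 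No Leibniz identity is available to bypass this limit---it is intrinsic to the fractional power.
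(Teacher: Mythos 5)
The paper does not actually prove Proposition \ref{pr3.1}; it defers to the standard references \cite{C,Killip-Visan1,T2}, and your Duhamel--Strichartz contraction in $L^\infty_t H^1_x \cap \langle\nabla\rangle^{-1/2}L_{t,x}^{2(d+2)/d}$, together with the fractional chain rule for the $C^{1,4/d}$ nonlinearity, is exactly the standard route those references follow; the diagnosis of the two-piece constraint on $s$ in (iii) as coming from the H\"older regularity of $z\mapsto|z|^{4/d}z$ and the admissible-pair/Sobolev trade-off is also the right picture. One small discrepancy worth flagging: the closed small-data iteration gives $\|\langle\nabla\rangle^{1/2}v\|_{L_{t,x}^{2(d+2)/d}}\lesssim\|v_0\|_{H^1}$, hence $S_{\mathbb{R}}(v)\lesssim\|v_0\|_{H^1}^{2(d+2)/d}\sim E(v)^{(d+2)/d}$, and since $(d+2)/d<2$ for $d\ge3$ this does \emph{not} yield the exponent $2$ stated in (ii); that exponent is evidently inherited from the $d=2$ case of \cite{KSV1} where $(d+2)/d=2$, and is harmless since all that is ever used is that small energy forces a small (and quantitatively bounded) scattering norm.
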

In the defocusing case we can extend the local well-posedness to global well-posedness by the energy conservation. For the focusing case, we have global well-posedness under the restriction $E(u_0,u_1)< E(Q,0)$ and $\norm{u_0}_2 <  \norm{Q}_2$ (see the next subsection).  To prove the scattering, we need the following stability theorem which can be proved by the Strichartz estimates.  This theorem  is used in the proof of Theorem \ref{th6.2}(the approximation of the large scale profile) and Theorem \ref{th1.9}(the existence of the critical element).
\begin{proposition}[Stability theorem]\label{pr3.4}
Assume $\tilde{v}$ solves
\begin{align*}
i\tilde{v}_t - \langle \nabla \rangle \tilde{v} = \mu \langle \nabla \rangle^{-1} \left(|\Re \tilde{v}|^\frac4d \Re \tilde{v}\right)  + e_1 + e_2 +e_3
\end{align*}
on the time interval $I\subseteq \R$ with error terms $e_1$, $e_2$ and $e_3$, and satisfies
$\big\|\langle \nabla \rangle^\frac12 \tilde{v} \big\|_{L_t^\infty L_x^2} \le M$ and $
\left\|\Re \tilde{v} \right\|_{L_{t,x}^\frac{2(d+2)}d(I \times \mathbb{R}^d)} \le L$ for some constants $M,L > 0$.
Assume further for some $t_0 \in I$ and
$\left\|\langle \nabla \rangle^\frac12 (v_0 - \tilde{v}(t_0)) \right\|_{L^2} \le M'$
for some constant $M'> 0$.  Then there exists $\e=\epsilon(M,M',L)>0$ with the following properties: if
\begin{align*}
& \left\|e^{-i(t-t_0) \langle \nabla \rangle} (v_0 - \tilde{v}(t_0)) \right\|_{L_{t,x}^\frac{2(d+2)}d(I \times \mathbb{R}^d)} \\
& \ + \left\|e_1 \right\|_{L_t^1 H_x^\frac12} + \left\|\langle \nabla \rangle e_2 \right\|_{L_{t,x}^\frac{2(d+2)}{d+4}(I\times \mathbb{R}^d)} + \left\|\int_{t_0}^t e^{-i(t-s) \langle \nabla \rangle} e_3(s) \,\mathrm{d}s \right\|_{L_{t,x}^\frac{2(d+2)}d \cap L_t^\infty H_x^\frac12(I \times \mathbb{R}^d) } \le \epsilon,
\end{align*}
then there exists a solution $v$ to \eqref{eq2.2} on the time interval $I$ with $v(t_0) = v_0$, and $v$ satisfies
\begin{align*}
\left\|v-\tilde{v} \right\|_{L_{t,x}^\frac{2(d+2)}d(I\times \mathbb{R}^d)} \le \epsilon C(M,M',L), \text{ and }
\left\|v- \tilde{v} \right\|_{L_t^\infty H_x^\frac12(I\times \mathbb{R}^d)} \le M' C(M,M',L).
\end{align*}
\end{proposition}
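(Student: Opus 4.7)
Set $w = v - \tilde v$, so that subtracting the equation for $\tilde v$ from \eqref{eq2.2} gives $w(t_0) = v_0 - \tilde v(t_0)$ and
\begin{align*}
iw_t - \langle \nabla \rangle w = \mu \langle \nabla \rangle^{-1} N(\tilde v, w) - e_1 - e_2 - e_3, \quad N(\tilde v, w) := |\Re(\tilde v + w)|^{4/d} \Re(\tilde v+w) - |\Re\tilde v|^{4/d}\Re\tilde v.
\end{align*}
The plan is the standard Kenig--Merle / Killip--Stovall--Visan stability argument adapted to the first-order Klein--Gordon flow: partition $I$ into finitely many subintervals on which the scattering norm of $\tilde v$ is small, run a perturbative Strichartz/fixed-point argument on each using Lemma \ref{le2.3}, and then iterate across the partition with constants accumulating in a controlled fashion.

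\textbf{Subdivision and Strichartz bounds.} Fix $\eta = \eta(M, M') > 0$ small, to be chosen. Since $\|\Re\tilde v\|_{L_{t,x}^{2(d+2)/d}(I)} \le L$, partition $I = \bigcup_{j=1}^J I_j$ into $J \lesssim (L/\eta)^{2(d+2)/d}$ consecutive subintervals on which $\|\Re\tilde v\|_{L_{t,x}^{2(d+2)/d}(I_j)} \le \eta$. On $I_j = [\tau_{j-1}, \tau_j]$, introduce the perturbation norm
\[
\|w\|_{X(I_j)} := \|w\|_{L_{t,x}^{2(d+2)/d}(I_j)} + \|\langle \nabla\rangle^{1/2} w\|_{L_t^\infty L_x^2(I_j)} + \|\langle\nabla\rangle^{1/2} w\|_{L_{t,x}^{2(d+2)/d}(I_j)}.
\]
Applying Duhamel's formula together with Lemma \ref{le2.3} with the sharp admissible pair $(q, r) = \bigl(2(d+2)/d, 2(d+2)/d\bigr)$ gives
\begin{align*}
\|w\|_{X(I_j)} \lesssim \|\langle \nabla\rangle^{1/2} w(\tau_{j-1})\|_{L^2_x} + \|\langle \nabla\rangle^{1/2} N(\tilde v, w)\|_{L_{t,x}^{2(d+2)/(d+4)}(I_j)} + R_j,
\end{align*}
where $R_j$ collects the Strichartz norms of the contributions of $e_1, e_2, e_3$ and of the free evolution of $v_0 - \tilde v(t_0)$ on $I_j$; these are summable across $j$ and by hypothesis majorised by $\epsilon$ plus $M'$ for the first subinterval.

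\textbf{Fractional nonlinearity --- the main obstacle.} The principal subtlety is that for $d \ne 4$ the exponent $1 + 4/d$ is non-integer, so the Leibniz rule cannot be applied directly to $\langle \nabla \rangle^{1/2} N$. Writing $F(z) = |z|^{4/d} z$, one combines the pointwise bound $|F(a+b) - F(a)| \lesssim (|a|^{4/d} + |b|^{4/d})|b|$ with a Christ--Weinstein-type fractional chain rule, valid since $F$ is $C^1$ with Hölder derivative of order $\min(1, 4/d)$; after a Littlewood--Paley decomposition this produces
\begin{align*}
\|\langle\nabla\rangle^{1/2} N(\tilde v, w)\|_{L_{t,x}^{2(d+2)/(d+4)}(I_j)} \lesssim \bigl(\|\Re\tilde v\|^{4/d}_{L_{t,x}^{2(d+2)/d}(I_j)} + \|w\|^{4/d}_{X(I_j)}\bigr)\|w\|_{X(I_j)}.
\end{align*}
Choosing $\eta$ small enough for the $\eta^{4/d}$ prefactor to be absorbed, a standard continuity/bootstrap argument closes on $I_j$ and yields $\|w\|_{X(I_j)} \le C\bigl(\|\langle\nabla\rangle^{1/2} w(\tau_{j-1})\|_{L^2_x} + \epsilon\bigr)$. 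Iterating through the $J = J(M, M', L)$ subintervals, the constants compound geometrically as $C^J$, producing the final bound with $C(M,M',L)$; existence of $v$ on each $I_j$ follows via Banach fixed point on the ball of radius $O(C(M,M',L)\epsilon)$, and the smallness threshold $\epsilon \le \epsilon_1(M, M', L)$ is precisely what guarantees the bootstrap hypothesis remains valid throughout all $J$ steps.
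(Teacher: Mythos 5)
The paper states Proposition \ref{pr3.4} without proof, so your argument must stand on its own. The overall scheme --- set $w = v - \tilde v$, split $I$ into subintervals on which $\|\Re\tilde v\|_{L_{t,x}^{2(d+2)/d}}$ is small, run a bootstrap via Strichartz, and iterate --- is the right one. But the way you estimate the nonlinearity introduces a gap that is both unnecessary and, in high dimensions, fatal.

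\textbf{The derivative you put on $N$ should not be there.} You aim to bound $\|\langle\nabla\rangle^{1/2}N(\tilde v,w)\|_{L_{t,x}^{2(d+2)/(d+4)}}$ and then invoke a Christ--Weinstein fractional chain rule to attack it. This is where the argument breaks. Rewrite the equation for $w$ as $i\partial_t w - \langle\nabla\rangle w = \langle\nabla\rangle^{-1}G$ with $G = \mu N(\tilde v, w) - \langle\nabla\rangle(e_1+e_2+e_3)$, and apply Lemma \ref{le2.3} not at the $H^1$ level but to $\langle\nabla\rangle^{-1/2}w$ (equivalently, shift the Sobolev scale down by $1/2$). With $(q,r) = (\tilde q,\tilde r) = \left(\tfrac{2(d+2)}d, \tfrac{2(d+2)}d\right)$ the exponent $\tfrac{d+2}{2}\left(\tfrac12-\tfrac1{\tilde r}\right) = \tfrac12$ on the source term exactly cancels the $\langle\nabla\rangle^{-1/2}$ you borrowed, and you obtain
\begin{align*}
\left\|\langle\nabla\rangle^{1/2}w\right\|_{L_t^\infty L_x^2(I_j)} + \|w\|_{L_{t,x}^{2(d+2)/d}(I_j)}
\lesssim \left\|\langle\nabla\rangle^{1/2}w(\tau_{j-1})\right\|_{L_x^2} + \|N\|_{L_{t,x}^{2(d+2)/(d+4)}(I_j)} + R_j,
\end{align*}
with \emph{no derivative whatsoever on $N$}. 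The estimate on $N$ is then elementary: the pointwise bound $|N| \lesssim \left(|\Re\tilde v|^{4/d} + |\Re w|^{4/d}\right)|\Re w|$ and H\"older give
\begin{align*}
\|N\|_{L_{t,x}^{2(d+2)/(d+4)}(I_j)} \lesssim \left(\|\Re\tilde v\|_{L_{t,x}^{2(d+2)/d}(I_j)}^{4/d} + \|\Re w\|_{L_{t,x}^{2(d+2)/d}(I_j)}^{4/d}\right)\|\Re w\|_{L_{t,x}^{2(d+2)/d}(I_j)},
\end{align*}
and the $\eta^{4/d}$ smallness closes the bootstrap. This exploiting of the $\langle\nabla\rangle^{-1}$ smoothing in the first-order Klein--Gordon formulation is precisely the mechanism that makes the mass-critical KG stability theory work with the non-algebraic nonlinearity; your route sidesteps it.

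\textbf{Why your fractional chain rule step fails.} The function $F(z)=|z|^{4/d}z$ has $F'$ H\"older continuous only of order $\min(1,4/d)$. For $d\ge 9$ this is $<1/2$, and there is no fractional chain rule for differences $F(a+b)-F(a)$ that yields a $|\nabla|^{1/2}$ bound with the clean product structure you wrote down --- this is exactly the obstruction that forces Tao--Visan--Zhang and Killip--Visan to introduce exotic Strichartz norms for mass-critical NLS in high dimensions. Moreover your claimed bound omits the $\|\langle\nabla\rangle^{1/2}\tilde v\|$ factor that the chain rule would necessarily produce when the half-derivative lands on $\tilde v$; without it the estimate is simply not a consequence of any such lemma. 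Finally, your bootstrap norm $X(I_j)$ includes $\|\langle\nabla\rangle^{1/2}w\|_{L_{t,x}^{2(d+2)/d}}$, which by Lemma \ref{le2.3} requires $\|\langle\nabla\rangle w(\tau_{j-1})\|_{L^2}$ on the right-hand side --- one full derivative --- but the hypothesis of the Proposition only supplies $\|\langle\nabla\rangle^{1/2}(v_0-\tilde v(t_0))\|_{L^2}\le M'$, i.e.\ $H^{1/2}$ data. Drop that component from $X$ and use the derivative-free estimate above, and the argument closes cleanly in all dimensions.
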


\subsection{Variational estimate}
In this subsection, we collect some variational estimates which are needed when studying the focusing NLKG. The variational estimates are known in \cite{IMN} or can be proved using similar arguments.  See also \cite{JL,Z}.
For $( \alpha, \beta) \in \R^2$,
let
\EQ{
m_{\alpha,\beta}:= \inf\left\{ E(\varphi, 0): \varphi \in H^1(\mathbb{R}^d)\setminus\{0\}, \mathcal{K}_{\alpha,\beta} (\varphi) = 0\right\},
}
where
\begin{align*}
\mathcal{K}_{\alpha, \beta} (\varphi) = & \frac{\partial}{\partial \lambda}\Big|_{\lambda=0} E \left(e^{\alpha \lambda}\varphi \left(e^{\beta \lambda}x \right),0 \right)\\
=& \int_{\mathbb{R}^d} \frac{2 \alpha -(d-2) \beta}2 |\nabla \varphi|^2 + \frac{2 \alpha - d \beta}2 |\varphi|^2  - \left( \alpha -\frac{d^2 \beta}{ 2(d+2)} \right) |\varphi|^{\frac{2(d+2)}d  } \,\mathrm{d}x.
\end{align*}
Let
\begin{align*}
\mathcal{K}_{\alpha, \beta}^+ = \left\{ (u_0,u_1) \in H^1\times L^2: E(u_0,u_1) < m_{\alpha, \beta}, \mathcal{K}_{\alpha, \beta}(u_0) \ge 0\right\},\\
\mathcal{K}_{\alpha, \beta}^- = \left\{ (u_0,u_1) \in H^1\times L^2: E(u_0,u_1) < m_{\alpha, \beta}, \mathcal{K}_{\alpha, \beta}(u_0)  < 0\right\}.
\end{align*}
In particular, we will use
\begin{align*}
\mathcal{K}_{0}(\varphi) : = \mathcal{K}_{1,0}(\varphi),
\text{ and }
\mathcal{K}_1(\varphi) : = \mathcal{K}_{d, 2}(\varphi).
\end{align*}
$\mathcal{K}_{0}(\varphi)$ is convenient for the blow-up while $\mathcal{K}_1(\varphi)$ is convenient for the scattering. As a sign-functional, they play the same roles.
\begin{lemma}\label{le2.4v31}
We have $m_{1,0} =m_{d,2}= E(Q,0)> 0 $, where $Q \in H^1$ is the ground state of \eqref{eq1.4}.  Moreover,
$\mathcal{K}_{1, 0}^\pm=\mathcal{K}_{d, 2}^\pm$.
\end{lemma}
\begin{remark}
Recall that $Q$ is the unique (up to symmetry) maximizer to the following sharp Gagliardo-Nirenberg inequality:
\begin{align}\label{eq2.13v178}
 \left\|f\right\|_{L_x^\frac{2(d+2)}d}^\frac{2(d+2)}d \le \frac{d+2}d \left( \frac{\|f\|_{L_x^2}}{\|Q\|_{L_x^2}}\right)^\frac4d
  \|\nabla f\|_{L_x^2}^2.
\end{align}
As a result, we have $\mathcal{K}_{1, 0}^- = \mathcal{K}_{d, 2}^- = \left\{E(u_0,u_1) < E(Q,0): \norm{u_0}_{L^2}>\norm{Q}_{L^2}\right\}$.
\end{remark}
\begin{proposition}\label{pr2.12}
Let $u: I \times \mathbb{R}^d \to \mathbb{R}$ be a solution with maximal lifespan $I= \left(-T_*, T^* \right)$ to \eqref{eq1.1} with $(u(0), u_t(0)) \in H_x^1 \times L_x^2$,  $\mu=-1$,  and $E(u(0), u_t(0)) < E(Q, 0)$.

\begin{itemize}
\item If $\norm{u_0}_2 < \norm{Q}_2$, then $I=\R$ and for any $t \in \mathbb{R} $,
%\EQN{
\begin{align}
E\left(u(t), \partial_t u(t) \right)
\le& \frac12 \int_{\mathbb{R}^d}  |\nabla u |^2 + |u|^2 + |\partial_t u |^2   \,\mathrm{d}x
 \le \left( 1 + \frac{d}2\right) E\left(u, \partial_t u \right), \label{eq2.34} \\
\mathcal{K}_0(u(t) ) \ge \ & c \min\left( E(Q,0) - E\left(u(0),u_t(0) \right), \left\|u(t) \right\|_{H^1_x}^2  \right), \notag \\
\mathcal{K}_1(u(t) ) \ge \ & c \min\left( E(Q,0) - E\left(u(0), u_t(0) \right) , \left\|\nabla u(t) \right\|_{L_x^2}^2 \right). \notag
\end{align}

\item If $\norm{u_0}_2> \norm{Q}_2$, then $\max \left(T_*,T^* \right)<\infty$.
Moreover, we have for any $t\in I$,
\begin{align*}
\mathcal{K}_0(u(t) ) \le& - 2 \left( E(Q,0) - E\left(u(0), u_t(0) \right) \right) < 0,
\intertext{ and }
\mathcal{K}_1(u(t)) \le  & - 2 \left( E(Q,0) - E(u(0), u_t(0) ) \right) < 0.
\end{align*}
\end{itemize}
\end{proposition}
\begin{remark}
The blow-up part in the above proposition can be proven by showing the strict concavity of $\left\|u(t) \right\|_{L_x^2}^{-\frac2d}$. We will omit the details of the proof but refer to \cite{IMN,KSV1,NS2,PS} for similar argument.
\end{remark}

\section{Proof of the main theorem}\label{se7}
In this section, we prove the main theorem, assuming two crucial ingredients which will be proved in the remaining sections.
The proof follows closely Kenig-Merle's road map and the ideas in \cite{KSV1}.  Let
\begin{equation*}
\Lambda (E) = \sup   \| u \|_{L_{t,x}^{ \frac{2(d+2)}d }   (\mathbb{R} \times \mathbb{R}^d )},
\end{equation*}
where the supremum is taken over all solutions $u \in C_t^0 H_{x }^{ 1}$ of \eqref{eq1.1} obeying $ {E}(u,\partial_t u) \le E$ (and an extra assumption $\norm{u_0}_{L^2}<\norm{Q}_{L^2}$ when $\mu = -1$).

Let $E_{c} = \sup \{ E: \Lambda  (E) < \infty\}$. We have $E_c > 0$ by the small data scattering results in Proposition \ref{pr3.1}.
To prove Theorem \ref{th1.4}, we only need to show $E_{c}  = \infty $(when $\mu = 1$) and $E_c = E(Q,0) $(when $\mu = - 1$).  We will prove it by contradiction argument.

\subsection{Existence of critical element}
The main result of this subsection is
\begin{theorem}[Existence of an critical element]\label{th1.9}
Assume $E_{c}  <  \infty $(when $\mu = 1$) and $E_c <  E(Q, 0) $(when $\mu = - 1$). There exists a global solution $u_c$ to \eqref{eq1.1} with $E\left(u_c, \partial_t u_c \right) = E_c$ (and also $\norm{u_c(0)}_{L^2}<\norm{Q}_{L^2}$ when $\mu=-1$) and
$\|u_c\|_{L_{t,x}^\frac{2(d+2)}d ( \R \times \mathbb{R}^d )} = \infty$.
Furthermore, there exists $x: \mathbb{R} \to \mathbb{R}^d$
such that
\begin{align}\label{eq3.1v182}
\left\{ (u_c, \partial_t u_c)(t, x+ x(t)): t\in \mathbb{R} \right\}
\end{align}
 is pre-compact in $H^1 \times L^2$.
%
%Moreover, $x(t)$ satisfies $\limsup\limits_{|t| \to \infty} \left|\frac{x(t)}{t} \right| = 0$.
\end{theorem}
As a direct consequence of the pre-compactness of the critical element, we have
\begin{corollary}
For any $\eta >0$, there is $C: \mathbb{R}^+ \to \mathbb{R}^+$ such that
\begin{align}\label{eq3.1v161}
& \int_{|x- x(t)| \ge  C(\eta)} \left| \nabla  u_c(t,x)\right|^2 + \left|   u_c(t,x)\right|^2+ \left|\partial_t u_c(t,x)\right|^2 + \left|u_c(t,x)\right|^\frac{2(d+2)}d\, \mathrm{d}x \notag \\
& \quad + \int_{| \xi|  \le \frac1{ C(\eta)}} \left|  \widehat{u_c}(t, \xi)\right|^2
%+ \left|\widehat{\partial_t u}_c(t, \xi) \right|^2
 \,\mathrm{d} \xi < \eta, \ \forall\, t \in \mathbb{R}.
\end{align}
\end{corollary}
The proof of the above theorem relies on two ingredients.  The first one is the profile decomposition associated to the linear Klein-Gordon equation with data in $H^1$ at the $L^2$-critical level.  More precisely, due to the mass-criticality, we need to understand the defect of compactness of the following Strichartz estimate
\EQ{
\normo{e^{it\jb{\nabla}}f}_{L_{t,x}^{2+\frac{4}{d}}(\R\times \R^d)}\les& \norm{f}_{H^1}.
}
There are several non-compact groups of symmetry in the above inequality.
The first one is the spatial translation:
\EQ{f\to \left(T_y f \right)(\cdot) : = f(\cdot-y), \quad y\in \R^d.}
The second one is phase modulation:
\EQ{f\to  e^{i\theta \langle{\nabla}\rangle}f, \quad \theta \in \R.}
The third one is dilation in one direction (not a group):
\EQ{f\to  D_\lambda f:=\lambda^{-\frac{d}2}f\left( \frac{\cdot}\lambda \right), \quad \lambda \in [1,\infty).}
For $f\in H^1$, we see $\normo{D_\lambda f}_{L^2}=\norm{f}_{L^2}$ and $\normo{D_\lambda f}_{\dot H^1}\to 0$ as $\lambda \to \infty$. We introduce a Schr\"odinger dilation
\EQ{
u(t,x)\to \widetilde D_\lambda u:=\lambda^{- \frac{d}2}u \left( \frac{t}{\lambda^2}, \frac{x}\lambda \right).
}
We have $\normo{\widetilde D_\lambda u}_{L_t^qL_x^r}=\norm{u}_{L_t^q L_x^r}$ when $(q,r)$ is sharp Sch\"odinger-admissible.
Moreover,
\EQ{
\widetilde D_{\lambda^{-1}} \left[e^{it \langle{\nabla}\rangle} D_\lambda f \right] = e^{i\lambda^2 t \langle{\lambda^{-1}\nabla}\rangle} f.
}
The last one is the Lorentz transformation.  We use the version as in \cite{KSV1}. For any $\nu \in \mathbb{R}^d$, we define the Lorentz boost of the space-time:
\begin{align*}
\left(\tilde{t}, \tilde{x}\right) = L_\nu(t,x) : = \left(\langle \nu \rangle t - \nu \cdot x, x^\perp + \langle \nu \rangle x^\parallel - \nu t\right),
\end{align*}
where $x^\perp= x- \frac{( x \cdot \nu) \nu}{|\nu|^2}$ and $x^\parallel= \frac{ (x\cdot \nu) \nu}{ |\nu|^2} $. An easy computation yields
\EQ{
(t,x)=L_\nu^{-1}  \left(\tilde t, \tilde x \right)= L_{-\nu} \left(\tilde t, \tilde x \right)
= \left(\langle \nu \rangle \tilde t + \nu \cdot \tilde x, {\tilde x}^\perp + \langle \nu \rangle {\tilde x}^\parallel + \nu \tilde t\right)
}
and that if $u(t,x) = e^{-i t \langle \xi\rangle  + i x \cdot \xi}$, then
\begin{align}\label{eq2.9}
u \circ  L_\nu^{-1}\left(\tilde{t}, \tilde{x}\right) = e^{-i \tilde{t}\left\langle \tilde{\xi} \right\rangle  + i  \tilde{x} \cdot \tilde{\xi} },
\end{align}
where
\begin{align*}
\tilde{\xi} = { {l}}_\nu(\xi) : = \xi^\perp + \langle \nu \rangle \xi^\parallel - \nu \langle \xi \rangle.
\end{align*}
The action of the Lorentz boosts on the function $f$ is defined to be
\begin{align*}
\L_\nu f(x) : = \left( e^{-i \cdot \langle \nabla \rangle} f \right) \circ L_\nu (0,x).
\end{align*}
Then
\begin{align}\label{eq2.13v139}
\left(e^{-it\langle \nabla \rangle} \L_\nu^{-1} f\right)(x) : = \left(e^{-i \cdot \langle \nabla \rangle} f\right) \circ L_\nu^{-1}(t,x),
\end{align}
namely
$u $ is a solution of the linear Klein-Gordon equation if and only if $u\circ L_\nu^{-1}$ is a solution of the linear Klein-Gordon equation
with the following transformation on the initial data
\EQ{
f\mapsto
\left(\L_\nu f\right)(x) : = \left(e^{-i \cdot \langle \nabla \rangle } f\right) \circ L_\nu \left(0,x\right).
}
Moreover, direct computation yields (see \cite{KSV1})
\EQ{
\widehat{ \L_\nu^{-1} f }  \left(\tilde{\xi}\right) = \jb{\xi} \jb{\tilde{\xi}}^{-1} \hat{f}(\xi),\quad d\xi=\jb{\xi} \jb{\tilde{\xi}}^{-1} d\tilde{\xi},
}
and
\begin{align}\label{eq3.12v161}
\L_\nu^{-1} T_y e^{i\tau \langle \nabla \rangle} = T_{\tilde{y}} e^{i\tilde{\tau} \langle \nabla \rangle } \L_\nu^{-1}, \text{ where } \left(\tilde{\tau}, \tilde{y}\right) = L_\nu (\tau, y).
\end{align}
Furthermore, we have for any $s\in \mathbb{R}$,
\begin{align}\label{eq3.14v162}
\left\langle \L_\nu^{-1} f , g \right\rangle_{H^s} = \left\langle f, m_s^\nu(\nabla) \L_\nu g\right\rangle_{H^s}, \text{ with } m_s^\nu(\xi)
:= \left( \frac{\langle {\xi}\rangle}{\langle \tilde{\xi}\rangle} \right)^{1- 2s }
\end{align}
and $\left\|m_s^\nu \right\|_{L_\xi^\infty} + \left\|(m_s^\nu)^{-1} \right\|_{L_\xi^\infty} \lesssim \langle \nu \rangle^{|2s - 1|}$.

We can see the Lorentz transformation is unitary in $H^{\frac12}$, so assuming uniform boundedness in $H^1$ will require boundedness of $|\nu|$.  However, due to the mass-criticality, there is a $L^2$-dilation.  Thus the Lorentz transformation should also be taken into count for the defect of compactness.  More precisely, for $\nu_n\to \nu$ and $\lambda_n\to \infty$, as $n \to \infty$, we have for $f\in H^1$,
\EQ{
D_{\lambda_n} \L_{\nu_n}  f-\L_{\nu_n } D_{\lambda_n} f
}
is not small in $L^2$ in general.
By a direct calculation, we have
\begin{lemma}\label{le2.8}
For any $f \in L^2\setminus \{0\}$ and $\Lambda > 0$,
\begin{align*}
\mathcal{K} : = \left\{ D_\lambda^{-1} \L_\nu^{-1} m_0^\nu(\nabla)^{-1} e^{i\nu x} D_\lambda f: |\nu|\le \Lambda,
 \text{ and }
\Lambda^{-1} \le \lambda < \infty \right\}
\end{align*}
is a precompact subset of $L^2$, and $0 \notin \overline{\mathcal{K}}$. Furthermore, if $\hat{f}= \chi_{[-1,1]^d}$, we see for any $ R > 0$,
\begin{align}\label{eq2.20}
supp \, \hat{g} \subseteq \left\{ |\xi|\lesssim  \langle \Lambda \rangle \right\}, \ \|g\|_{L^2_x} \gtrsim \langle \Lambda
\rangle^{-1},
\text{ and } \int_{|x| \sim R} |g(x)|^2 \,\mathrm{d}x \lesssim \frac{\langle \Lambda \rangle}{\langle R\rangle},
\end{align}
uniformly for any $g \in \mathcal{K}$.
\end{lemma}
This result is the higher dimension extension of Lemma 2.8 in \cite{KSV1}, since the proof follows the similar argument, we omit the proof here.
We remark that $\mathcal{  K }$  is not compact, and the elements in  $\bar{\mathcal{K}} \setminus \mathcal{K} $ are characterized as follows:
$ h  \in \bar{\mathcal{ K} } \setminus \mathcal{ K } $ if and only if
$\hat{h} \left(\tilde{ \xi } \right) = \hat{f } \left( \tilde{\xi}^\perp + \langle \nu \rangle \tilde{\xi}^\parallel \right)$.

With all the symmetries above, we have
\begin{theorem}[Profile decomposition] \label{th5.1}
Assume $ \left\{ v_n \right\}_{n\ge 1} $ is a bounded sequence in $H_x^1(\mathbb{R}^d)$. Then, up to a subsequence, there exists $J_0 \in [1, \infty]$ and for each integer $1\leq j<J_0$, there also exist a non-zero function $ \phi^j \in L_x^2(\mathbb{R}^d)$, a sequence $\left\{\left(\lambda_n^j,t_n^j, x_n^j, \nu_n^j\right) \right\} \subseteq [ 1, \infty)\times \mathbb{R} \times \mathbb{R}^d \times \mathbb{R}^d$ with the following properties:
\begin{itemize}
\item either $\lambda_n^j \to \infty$ as $n\to \infty$ or $\lambda_n^j \equiv 1$;

\item either $\frac{t_n^j}{ \left(\lambda_n^j \right)^2} \to \pm \infty$ as $n\to \infty$ or $t_n^j \equiv 0$;

\item $\phi^j \in H^1$ if $\lambda_n^j \equiv 1$;

\item $\nu_n^j \to \exists \nu^j\in \mathbb{R}^d$ as $n\to \infty$, and $\nu_n^j \equiv 0$ if $\lambda_n^j \equiv 1$.
\end{itemize}
Let $P_n^j$ be the projector defined by
\begin{align}\label{eq3.17v182}
P_n^j \phi^j : =
\begin{cases}
\phi^j ,         & \text{ if } \lambda_n^j\equiv 1,\\
P_{\le (\lambda_n^j)^\theta }  \phi^j, &     \text{ if } \lambda_n^j \to \infty,
\end{cases}
\end{align}
where $0 < \theta \ll 1$.
For any $ 1\leq J <J_0 $, we have the decomposition
\begin{align}\label{eq3.17v173}
v_n = \sum\limits_{j=1}^J T_{x_n^j} e^{it_n^j \langle \nabla \rangle} \L_{\nu_n^j} D_{\lambda_n^j} P_n^j \phi^j + w_n^J,
\end{align}
with the decoupling properties
\begin{align}
 \left\|v_n \right\|_{L_x^2}^2
- \sum\limits_{j=1}^J \left\|T_{x_n^j} e^{it_n^j\langle \nabla \rangle} \L_{\nu_n^j} D_{\lambda_n^j} P_n^j \phi^j \right\|_{L_x^2}^2
 - \left\|w_n^J \right\|_{L_x^2}^2 \to 0,\label{eq3.16v162}\\
\left\|v_n\right\|_{H_x^1}^2
- \sum\limits_{j=1}^J \left\|T_{x_n^j} e^{it_n^j\langle \nabla \rangle} \L_{\nu_n^j} D_{\lambda_n^j} P_n^j \phi^j \right\|_{H_x^1}^2
- \left\|w_n^J \right\|_{H_x^1}^2 \to 0, \label{eq3.17v162}\\
 E(v_n) - \sum\limits_{j=1}^J E\left(T_{x_n^j} e^{it_n^j \langle \nabla \rangle} \L_{\nu_n^j} D_{\lambda_n^j} P_n^j \phi^j\right) - E(w_n^J) \to 0, &\text{ as } n\to \infty,\label{eq3.1v141}
\end{align}
and
\begin{align}\label{eq3.21v173}
\limsup\limits_{n\to \infty} \Big\|e^{-it\langle \nabla \rangle} w_n^J \Big\|_{L_{t,x}^\frac{2(d+2)}d(\mathbb{R}\times \mathbb{R}^d)}  \to 0, \text{ as } J \to J_0.
\end{align}
Moreover, for any $j\ne j'$, the orthogonality relation
\begin{align}\label{eq3.21v172}
\frac{\lambda_n^j}{\lambda_n^{j'}} + \frac{\lambda_n^{j'}}{\lambda_n^j} + \lambda_n^j \big|\nu_n^j - \nu_n^{j'}  \big| + \frac{ \big|s_n^{jj'} \big|}{ \big(\lambda_n^{j'} \big)^2} + \frac{ \big|y_n^{jj'} \big|} {\lambda_n^{j'}} \to \infty, \quad \text{ as } n \to \infty,
\end{align}
holds, where $\left(- s_n^{jj'}, y_n^{jj'}\right) : = L_{\nu_n^{j'}}\left(t_n^{j'} - t_n^j, x_n^{j'} - x_n^j\right)$.
\end{theorem}
The second ingredient in the proof of Theorem \ref{th1.9} is the following NLS approximation.  We will use it to construct the nonlinear profile.
\begin{theorem}[Approximation of the low frequency profile]\label{th6.2}
Assume
$\nu_n \to \nu \in \mathbb{R}^d$, $\lambda_n \to \infty$, and either $t_n \equiv 0$ or $\frac{t_n}{\lambda_n^2} \to \pm \infty$, as $n\to \infty$. Let $\phi \in L_x^2(\mathbb{R}^d)$, and also assume
\begin{equation}\label{eq:masscond}
\|\phi\|_{L^2} < \left({2C_d}\right)^{-\frac{d}4} \|Q\|_{L^2}, \text{ if } \mu = - 1.
\end{equation}
Let
\begin{align*}
\phi_n : = T_{x_n} e^{it_n \langle \nabla \rangle} \L_{\nu_n} D_{\lambda_n} P_{\le \lambda_n^\theta} \phi,
\end{align*}
where $\theta$ is some sufficiently small positive number. There exists a global solution $v_n$ of \eqref{eq2.2} with $v_n(0) = \phi_n$ for $n$ large enough satisfying
\begin{align*}
S_{\mathbb{R}}(v_n) \lesssim_{\|\phi\|_{L^2} } 1.
\end{align*}
Moreover, $\forall\, \epsilon > 0$, there exist $N_\epsilon > 0$ and $\psi_\epsilon \in C_c^\infty(\mathbb{R} \times \mathbb{R}^d)$ so that for each $n > N_\epsilon$, we have
\begin{align}\label{eq6.2}
\left\|\Re\left( v_n \circ L_{\nu_n}^{-1} \left(t + \tilde{t}_n, x + \tilde{x}_n\right)
- {\lambda_n^{- \frac{d}2 } }{e^{-it}} \psi_\epsilon \left( \frac{t}{\lambda_n^2}, \frac{x}{\lambda_n}\right) \right) \right\|_{L_{t,x}^\frac{2(d+2)}d(\mathbb{R} \times \mathbb{R}^d)} < \epsilon,
\end{align}
where $(\tilde{t}_n, \tilde{x}_n ) : = L_{\nu_n}(t_n, x_n)$.
\end{theorem}
If we assume the above two theorems hold, then by Proposition \ref{pr2.12}, Theorem \ref{th5.1}, Theorem \ref{th6.2}, and Proposition \ref{pr3.4}, with similar argument as in \cite{IMN,KSV1}, we can give the following result.
\begin{proposition}[P.S. condition modulo translations] \label{pr7.1}
Let $u_n$ be a sequence of global solutions to \eqref{eq1.1}, which satisfy
\begin{align}
\lim\limits_{n\to \infty} S_{(- \infty, 0]} (u_n) = \lim\limits_{n\to \infty} S_{[ 0, \infty) } (u_n) = \infty, \notag\\
\left\|u_n(0)\right\|_{L^2} < \| Q\|_{L^2}, \text{ when $\mu = -1$, } \label{eq7.2} \\
\intertext{ and also }
E(u_n) \nearrow E_c \quad  \text{ as } n\to \infty. \notag
\end{align}
Then $\left(u_n(0), \partial_t u_n(0)\right)$ converges in $H^1\times L^2$ modulo translations up to a subsequence.
\end{proposition}
The proposition can be shown in the same spirit as in \cite{KSV1}.
Let us give a brief outline of the proof to see how the tools we have developed by now are used.
\begin{proof}[Outline of the proof]
Let
\begin{align*}
v_n : = u_n + i \langle \nabla \rangle^{-1} \partial_t u_n,
\end{align*}
and we will show $v_n(0)$ converges in $H^1$ modulo translations after passing to a subsequence. When $\mu = -1$, by Proposition \ref{pr2.12} and \eqref{eq7.2}, we have $v_n$ satisfies
\begin{align*}
\left\|v_n(0) \right\|_{L^2}^2  \le 2 E_c < \|Q\|_{L^2}^2.
\end{align*}
Thus, for both defocusing and focusing cases, we get
\begin{align*}
\left\|v_n(0) \right\|_{H^1}^2 \lesssim E(v_n) \le E_c.
\end{align*}
We can then apply Theorem \ref{th5.1} to the sequence $v_n(0)$, and have for any $J \in [1, J_0) \cap \mathbb{N}$,
\begin{align*}
v_n(0) = \sum\limits_{j =1}^J \phi_n^j + w_n^J,
\end{align*}
with
\begin{align*}
\phi_n^j = T_{x_n^j} e^{it_n^j \langle \nabla \rangle} \L_{\nu_n^j} D_{\lambda_n^j} P_n^j \phi^j.
\end{align*}
For any $ 1 \le j \le J_0$, we can make sure that $\left\|\phi_n^j\right\|_{L^2}$ and $E(\phi_n^j)$ converge after passing to a subsequence. By \eqref{eq3.1v141}, we also have
\begin{align}\label{eq7.8}
 E_c = \lim\limits_{n\to \infty} E(v_n) = \lim\limits_{n\to \infty} \left( \sum\limits_{j = 1}^J E\left(\phi_n^j \right) + E \left(w_n^J \right) \right).
\end{align}
In the sequel, let us restrict ourselves to the case $J_0=1$. The preclusion of the case $J_0\ge 2$ is standard, see for instance \cite{Ch,KSV1}. In this case, the identity
\begin{align}\label{eq7.9}
\lim\limits_{n \to \infty} E(\phi_n^1) = E_c
\end{align}
follows also by a standard argument. By \eqref{eq7.8} and \eqref{eq2.34}, we have
\begin{align}\label{eq7.12}
v_n - \phi_n^1 = w_n^1 \to 0 \text{ in } H_x^1, \quad \text{ as } n \to \infty.
\end{align}
We now divide the analysis into the following three cases.
\begin{itemize}
\item[Case 1.] $\lambda_n^1  = 1$ and $t_n^1 = 0$;
\item[Case 2.] $\lambda_n^1 = 1$ and $t_n^1 \to \pm \infty$;
\item[Case 3.] $\lambda_n^1 \to \infty$ as $n\to\infty$.
\end{itemize}
In the first case, we have the desired conclusion.
The second case is precluded by a standard argument, see for example \cite{Ch,KSV1}.
We omit the details.

Let us show that the third case can also be precluded. We will apply Theorem \ref{th6.2}, but when $\mu = -1$, we need to verify the following result first:
\begin{lemma}\label{le7.3}
When $\mu = - 1$, if $\lim\limits_{n\to \infty} \lambda_n^1 = \infty$, we have $\left\|\phi^1 \right\|_{L^2}< \| Q\|_{L^2} $.
\end{lemma}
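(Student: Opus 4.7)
The plan is to identify $\lim_{n\to\infty} E(\phi_n^1)$ in the large-scale regime $\lambda_n^1 \to \infty$ and use the strict energy gap $E_c < E(Q,0) = \tfrac12 \|Q\|_{L^2}^2$ to extract the strict inequality $\|\phi^1\|_{L^2} < \|Q\|_{L^2}$. Setting up: the hypothesis $\|u_n(0)\|_{L^2} < \|Q\|_{L^2}$ and Proposition \ref{pr2.12} give $\|u_n(0)\|_{L^2}^2 + \|\partial_t u_n(0)\|_{L^2}^2 \le 2 E_c < \|Q\|_{L^2}^2$, whence
\[
\|v_n(0)\|_{L^2}^2 = \|u_n(0)\|_{L^2}^2 + \|\langle\nabla\rangle^{-1} \partial_t u_n(0)\|_{L^2}^2 < \|Q\|_{L^2}^2.
\]
Hence by the mass decoupling in \eqref{eq3.1v141} with $J = 1$, $\|w_n^1\|_{L^2} < \|Q\|_{L^2}$ for $n$ large, and the sharp Gagliardo-Nirenberg inequality (Theorem \ref{th2.11}), applied to $\Re w_n^1$ exactly as in the discussion following that theorem, yields $E(w_n^1) \ge 0$ for such $n$. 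Passing to the limit in the energy decoupling then gives $\limsup_{n\to\infty} E(\phi_n^1) \le E_c$.

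The main step is to compute $\lim_{n\to\infty} E(\phi_n^1)$ explicitly. Writing
\[
\phi_n^1 = T_{x_n^1} e^{it_n^1 \langle\nabla\rangle} L_{\nu_n^1} D_{\lambda_n^1} P_{\le (\lambda_n^1)^\theta} \phi^1,
\]
Lemma \ref{le2.4} together with the identity $\langle l_\nu(\xi)\rangle = \langle\nu\rangle \langle\xi\rangle - \nu \cdot \xi$ shows that $\widehat{\phi_n^1}$ is Fourier-supported in $B(\nu_n^1, r_n)$ with $r_n \lesssim \langle\nu_n^1\rangle (\lambda_n^1)^{\theta - 1} \to 0$. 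A frequency-shifted Bernstein estimate then gives
\[
\|\Re \phi_n^1\|_{L_x^{2(d+2)/d}} \lesssim r_n^{d/(d+2)} \|\phi_n^1\|_{L^2} \longrightarrow 0,
\]
so the nonlinear piece of $E(\phi_n^1)$ vanishes in the limit. For the quadratic piece, Lemma \ref{le2.4}, the Jacobian formula $|\det(\partial l_\nu/\partial\xi)| = \langle l_\nu(\xi)\rangle/\langle\xi\rangle$, and the change of variables $\zeta = \lambda_n^1 l_{\nu_n^1}(\xi)$ yield
\[
\|\langle\nabla\rangle \phi_n^1\|_{L^2}^2 = \int_{\mathbb{R}^d} \langle \zeta/\lambda_n^1\rangle \, \langle l_{-\nu_n^1}(\zeta/\lambda_n^1)\rangle \, \bigl|\widehat{P_n^1 \phi^1}(\zeta)\bigr|^2 \, d\zeta.
\]
On the Fourier support $|\zeta| \le (\lambda_n^1)^\theta$ of the integrand one has $\zeta/\lambda_n^1 \to 0$ uniformly, so the bracket converges uniformly to $\langle\nu\rangle$; combined with $\|P_n^1\phi^1\|_{L^2} \to \|\phi^1\|_{L^2}$, the integral converges to $\langle\nu\rangle \|\phi^1\|_{L^2}^2$. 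Hence $\lim_{n\to\infty} E(\phi_n^1) = \tfrac12 \langle\nu\rangle \|\phi^1\|_{L^2}^2$.

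Combining, $\tfrac12 \langle\nu\rangle \|\phi^1\|_{L^2}^2 \le E_c < \tfrac12 \|Q\|_{L^2}^2$, which gives $\|\phi^1\|_{L^2}^2 < \|Q\|_{L^2}^2/\langle\nu\rangle \le \|Q\|_{L^2}^2$ since $\langle\nu\rangle \ge 1$. The only technical step requiring care is the Fourier asymptotic for $\|\langle\nabla\rangle \phi_n^1\|_{L^2}^2$ under the composition of Lorentz boost with large-scale dilation; the rest is a routine application of the decouplings, Gagliardo-Nirenberg, and Bernstein.
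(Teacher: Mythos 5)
Your proof is correct and takes essentially the same route as the paper's: the central identity
\[
\lim_{n\to\infty}\|\langle\nabla\rangle\phi_n^1\|_{L^2}^2 = \lim_{n\to\infty}\int_{\mathbb{R}^d}\left\langle\tfrac{\zeta}{\lambda_n^1}\right\rangle\left\langle l_{-\nu_n^1}\!\left(\tfrac{\zeta}{\lambda_n^1}\right)\right\rangle\bigl|\widehat{P_n^1\phi^1}(\zeta)\bigr|^2\,d\zeta = \langle\nu\rangle\|\phi^1\|_{L^2}^2,
\]
obtained via Lemma \ref{le2.4} and the Lorentz change of variables, is exactly what the paper invokes, and combining it with $\lim E(\phi_n^1)\le E_c < E(Q,0)=\tfrac12\|Q\|_{L^2}^2$ closes the argument. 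The only difference is that you fill in two steps the paper takes for granted: you re-derive the energy bound $\limsup_n E(\phi_n^1)\le E_c$ directly from the mass/energy decouplings and the sharp Gagliardo--Nirenberg inequality (rather than citing \eqref{eq7.9}), and you explicitly justify via Bernstein that the nonlinear part of $E(\phi_n^1)$ vanishes, which is what makes $\lim 2E(\phi_n^1)$ equal to the limit of that integral.
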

\begin{proof}
Using \eqref{eq7.9} and \eqref{eq3.12v161}, we obtain
\begin{align*}
\langle \nu_\infty^1 \rangle \left\|\phi^1 \right\|_{L^2}^2
 =  & \lim\limits_{n\to \infty} \int_{\mathbb{R}^d} \left\langle
{\left( \lambda_n^1 \right)^{-1} \xi } \right\rangle \left\langle l_{- \nu_n^1} \left(
{\left( \lambda_n^1 \right)^{-1} \xi } \right) \right\rangle \left|  \left( \mathcal{F}\left(  P_{\le \left(\lambda_n^1 \right)^\theta}  \phi^1 \right) \right) (\xi) \right|^2 \,\mathrm{d}\xi \\
= &   \lim\limits_{n\to \infty} 2 E \left(\phi_n^1 \right) \le   2 E_c.
\end{align*}
This together with $2 E_c < 2 E(Q) =  \|Q\|_{L^2}^2$ implies the result.
\end{proof}
By Theorem \ref{th6.2}, $v_n^1$ with $v_n^1(0) = \phi_n^1$ is a global solution to \eqref{eq2.2} and $S_{\mathbb{R}} \left(v_n^1 \right) \lesssim_{E_c} 1$ for $n$ large enough.
Let us recall that the mass assumption of Theorem \ref{th6.2} in the focusing case is
\begin{equation}
\left\|\phi^1\right\|_{L^2}^2 < (2 C_d)^{-\frac{d}2} \|Q\|_{L^2}^2,
\end{equation}
which is fulfilled because $C_d < \frac12$. Using \eqref{eq7.12} and Proposition \ref{pr3.4}, we can conclude $S_{\mathbb{R}}(v_n) < \infty$, this is a contradiction and therefore completes the proof of Proposition \ref{pr7.1}.
\end{proof}
Once we get the (P.S.) condition modulo translation in Proposition \ref{pr7.1}, we can extract a special solution of NLKG, which is the critical element in Theorem \ref{th1.9}.

\subsection{Rigidity}
By a virial type argument, we can exclude the critical element, thus concluding the proof of Theorem \ref{th1.4} in the following theorem. We refer to \cite{IMN,KSV1} for a proof.
\begin{theorem}[Nonexistence of the critical element]\label{th8.1}
The critical element $u_c$ in Theorem \ref{th1.9} does not exist.
\end{theorem}
Before giving the proof of this theorem, we first collect some properties of the critical element. In  %we will argue by contradiction. In
 the defocusing case and also invoking \eqref{eq2.34} in the focusing case, we have
\begin{align}\label{eq8.1}
\|u_c\|_{L_t^\infty H_x^1}^2 + \|\partial_t u_c \|_{L_t^\infty L_x^2}^2 \le 4 E(u_c ).
\end{align}
By the Lorentz invariance of the NLKG and the minimality of $u_c$ as a blow-up solution, we have %must have that the momentum of $u_c$ is zero, that is
\begin{align}\label{eq8.2}
P(u_c , \partial_t u_c ) = 0.
\end{align}
%Our next step will be to use \eqref{eq8.2}
This leads to the control of $x(t)$ in \eqref{eq3.1v182}.
\begin{lemma}[Controlling $x(t)$] \label{le8.2}
The spatial center function $x(t)$ of $u_c$ satisfies
\begin{align}\label{eq3.34v182}
\left|\frac{x(t)}{t} \right| = 0,  \text{
%$|x(t)| = o(t)$,
as $|t| \to \infty$.}
\end{align}
\end{lemma}
\begin{proof}
By the spatial translation invariance, we may assume that $x(0) = 0$. Suppose \eqref{eq3.34v182} is not true,
%If the conclusion of the lemma did not hold, then
there would exist $\delta > 0$ and a sequence $t_n \to \pm \infty$ such that
\begin{align*}
|x(t_n) | > \delta |t_n|.
\end{align*}
%Without loss of generality, we
We may assume that $t_n \to \infty$.
%and that
%\begin{align*}
%|x(t)| \le |x(t_n)|, \text{ for all } 0 \le t \le t_n.
%\end{align*}
Let $\eta \ll E(u_c)$ be a sufficiently small positive constant
%be a small constant to be taken later % By \eqref{eq3.1v161}, there exists $C(\eta) > 0$ such that
%\begin{align}\label{eq8.3}
%\sup\limits_{t\in \mathbb{R}} \int_{|x- x(t)| > C(\eta)} |u_c(t,x)|^2 + |\nabla u_c (t,x)|^2
%+ |\partial_t u_c (t,x)|^2 + |u_c (t,x)|^\frac{2(d+2)}d\,\mathrm{d}x \le \eta.
%\end{align}
%We define
and define
\begin{align*}
R_n : = C(\eta) + |x(t_n)|.
\end{align*}
Let $\psi:\mathbb{R}_+ \to [0, 1]$ be a smooth cut-off function with
\begin{align}\label{eq3.35v182}
\psi(r) =
\begin{cases}
1, \text{  $ 0 \le r <  1$,} \\
 0, \text{  $r >  2$}
\end{cases}
\end{align}
 and define %an approximation to $x(t)$ by
\begin{align*}
X_{R_n}(t) : = \int_{\mathbb{R}^d} x \psi \left( \frac{|x|}{R_n} \right) e_{u_c} (t,x) \,\mathrm{d}x,
\end{align*}
where% $e_u$ denotes the energy density of $u$:
\begin{align*}
e_{u_c} : = \frac12 |u_c |^2 + \frac12 |\nabla u_c |^2 + \frac12 |\partial_t u_c |^2 +  \mu \frac{d}{2(d+2)} |u_c |^\frac{2(d+2)}d.
\end{align*}
By the triangle inequality, \eqref{eq8.1} and \eqref{eq3.1v161}, we have
\begin{align*}
|X_{R_n}(0)| \le \int_{|x| \le C(\eta)} |x| |e_{u_c} (0,x)| \,\mathrm{d}x + \int_{C(\eta) \le |x| \le 2 R_n} |x| |e_{u_c} (0,x)| \,\mathrm{d}x \lesssim C(\eta) E(u_c ) + \eta R_n.
\end{align*}
By the triangle inequality and \eqref{eq3.1v161}, %followed by \eqref{eq8.3},
we also have
\begin{align*}
|X_{R_n}(t_n)| & \ge |x(t_n)| E(u_c ) - \int_{|x- x(t_n)| \le C(\eta)} | x - x(t_n)| \psi\left( \frac{|x|}{R_n} \right) |e_{u_c} (t_n)| \,\mathrm{d}x \\
&\quad  - \int_{|x- x(t_n)| \ge C(\eta)} | x- x(t_n)|  \psi \left( \frac{ |x|}{R_n} \right) |e_{u_c} (t_n)| \,\mathrm{d}x
- |x(t_n)| \int_{\mathbb{R}^d} \left( 1- \psi\left( \frac{ |x|}{R_n} \right) \right) |e_{u_c} (t_n)| \,\mathrm{d}x \\
& \ge |x(t_n)| \left(E(u_c) - 4 \eta \right) - C(\eta) \left( 2 E(u_c) + 2 \eta \right).
\end{align*}
Thus, we get
\begin{align}\label{eq8.4}
|X_{R_n}(t_n) - X_{R_n}(0)| \gtrsim_{E(u_c )} |x(t_n)| - C(\eta).
\end{align}
%To derive a contradiction, we now seek an upper bound on the left hand side of \eqref{eq8.4}.
By a direct calculation relying on \eqref{eq8.2}, we have % shows that
\begin{align*}
 X_{R_n}'(t) = \int_{\mathbb{R}^d } \left( 1 - \psi\left( \frac{|x|}{R_n} \right) \right) \partial_t u_c \nabla u_c  \,\mathrm{d}x
- \int_{\mathbb{R}^d} \frac{x}{|x|R_n} \psi'\left( \frac{|x|}{R_n} \right) \partial_t u_c \,  x \cdot \nabla u_c  \,\mathrm{d}x.
\end{align*}
This together with \eqref{eq3.1v161} yields
\begin{align}\label{eq8.5}
|X_{R_n}'(t)| \lesssim \eta.
\end{align}
We can now derive an estimate by \eqref{eq8.4} and \eqref{eq8.5}, that is  %in the following %.  and the fundamental theorem of calculus, we obtain
\begin{align*}
\eta t_n \gtrsim |X_{R_n}(t_n) - X_{R_n}(0) | \gtrsim_{E(u_c )} |x(t_n)| - C(\eta) \gtrsim_{E(u_c )} \delta t_n - C(\eta),
\end{align*}
once taking $\eta$ small enough depending on $\delta$ and $E(u_c )$, and then taking $n$ sufficiently large, we arrive a contradiction. Therefore, we get \eqref{eq3.34v182}.
\end{proof}
We now turn to the proof of Theorem \ref{th8.1}.
%We will use a virial type argument.
Let $\eta_1 $ and $\eta_2 $ ba small positive constants to be determined later. By Lemma \ref{le8.2}, there exists $T_0 = T_0(\eta_1)>0$ such that
\begin{align}\label{eq8.6}
|x(t)| \le \eta_1 t \text{ for any } t \ge T_0.
\end{align}
%By \eqref{eq3.1v161}, there exist $C(\eta_1) >0$ and $C(\eta_2) > 0$ such that
%\begin{align}\label{eq8.7}
%\sup\limits_{t \in \mathbb{R}} \int_{|x- x(t)| > C(\eta_1)} |u_c (t,x)|^2 + |\nabla u_c (t,x)|^2 + |\partial_t u_c (t,x)|^2 + |u_c (t,x)|^\frac{2(d+2)}d \,\mathrm{d}x \le \eta_1,
%\end{align}
%and
%\begin{align}\label{eq8.8}
%\sup\limits_{t \in \mathbb{R}} \int_{| \xi| \le \frac1{C(\eta_2)}} |\widehat{u_c }(t, \xi)|^2 \,\mathrm{d}\xi \le \eta_2.
%\end{align}
By Plancherel's identity and \eqref{eq3.1v161}, we have
\begin{align}\label{eq8.9}
\int_{\mathbb{R}^d } |u_c (t,x)|^2 \,\mathrm{d}x\le \eta_2 + C(\eta_2)^2 \int_{\mathbb{R}^d} |\nabla u_c (t, x)|^2 \,\mathrm{d}x.
\end{align}
With $\psi$ defined as in \eqref{eq3.35v182} %, as in the proof of Lemma \ref{le8.2}
and $0 < \epsilon < 1 < R$ to be specified later, we define
\begin{align*}
Z_R(t) = - \int_{\mathbb{R}^d} \psi\left( \frac{|x|}{R} \right) \partial_t u_c (t,x) x \cdot \nabla u_c (t,x) \,\mathrm{d}x
- (1 - \epsilon) \int_{\mathbb{R}^d} \partial_t u_c (t,x) u_c (t,x) \,\mathrm{d}x.
\end{align*}
By the Cauchy-Schwarz inequality and \eqref{eq8.1}, we have
\begin{align}\label{eq8.10}
|Z_R(t)| \lesssim R E(u_c ) \lesssim_{u_c} R.
\end{align}
On the other hand, by direct calculation, we have % computation establishes
\begin{align*}
Z_R'(t) =  & \epsilon \left( \|u_c (t) \|_{H^1}^2 + \|\partial_t u_c \|_{L^2}^2 \right)
 + (1 - 2\epsilon) \int_{\mathbb{R}^d} |\nabla u_c (t)|^2 +  \mu  \frac{d}{ d+2}  |u_c (t)|^\frac{2(d+2)}d \,\mathrm{d}x\\
& - 2\epsilon \int_{\mathbb{R}^d} |u_c (t)|^2 \,\mathrm{d}x
- \int_{\mathbb{R}^d} \left( 1 - \psi\left(\frac{|x|}{R}\right) \right) \left( |\partial_t u_c (t)|^2
- |u_c (t)|^2 - \mu \frac{d}{d+2} |u_c (t)|^\frac{2(d+2)}d \right) \,\mathrm{d}x\\
& + \int_{\mathbb{R}^d} \frac{|x|}{2R} \psi'\left( \frac{|x|}{R}\right)
 \left( |\partial_t u_c (t)|^2 - |\nabla u_c (t)|^2 - |u_c (t)|^2 - \mu \frac{d}{d+2} |u_c (t)|^\frac{2(d+2)}d \right) \,\mathrm{d}x \\
& + \int_{\mathbb{R}^d} \frac1{ |x| R} \psi'\left( \frac{|x|}{R} \right) \left( x \cdot \nabla u_c (t)\right)^2 \, \mathrm{d}x.
\end{align*}
Then, by \eqref{eq2.13v178}, \eqref{eq3.1v161}, and \eqref{eq8.9}, we have for any $T_0 \le t \le T_1$,
%we have %find
\begin{align*}
Z_R'(t) \ge &\ \epsilon \left( \|u_c (t)\|_{H^1}^2 + \|\partial_t u_c \|_{L^2}^2 \right) - 2 \epsilon \eta_2 - 10 \eta_1 \\
& + \left(  ( 1 - 2 \epsilon) \left( 1 + \mu \frac{M(u_c (t))}{M(Q)} \right) - 2 \epsilon C(\eta_2)^2 \right) \int_{\mathbb{R}^d} |\nabla u_c (t)|^2 \,\mathrm{d}x,
\end{align*}
%for any $T_0 \le t \le T_1$ and
where
\begin{align}\label{eq3.41v182}
R = C(\eta_1) + \sup\limits_{t \in [T_0, T_1]} |x(t)|.
\end{align}

Taking $\eta_2$ sufficiently small depending on $u_c$, and $\epsilon$ small enough depending on $C(\eta_2)$, and finally $\eta_1$ sufficiently small depending on $\epsilon$ and $u_c $, we get
\begin{align}\label{eq8.11}
Z_R'(t) \gtrsim_{u_c} 1 , \quad \forall\,  T_0\le t \le T_1.  %\text{ and } R = C(\eta_1) + \sup\limits_{ t\in [T_0, T_1]} |x(t)|.
\end{align}
By  \eqref{eq8.10}, \eqref{eq8.11}, \eqref{eq3.41v182}, and \eqref{eq8.6}, we obtain
\begin{align*}
T_1 - T_0 \lesssim_{u_c} C(\eta_1) + \eta_1 T_1, \quad \forall\,  %\text{ for all }
 T_1 > T_0.
\end{align*}
%Choosing
Taking $\eta_1$ sufficiently small depending on $u_c$ and $T_1$ large enough, we get a contradiction. Thus, we have proven Theorem \ref{th8.1}.

\section{Profile decomposition: proof of Theorem \ref{th5.1}}\label{se5}

In this section, we prove Theorem \ref{th5.1}. The main tools are the refined linear and bilinear Strichartz estimates. Let us introduce a set of weak limit modulo symmetry.
\begin{definition}
For a  bounded sequence ${\bf v}=\{v_n\}_n \subseteq H^1$, we let $\mathcal{V}({\bf v})$ be the set of all functions $\phi \in L^2$ such that
there exist a number $\Lambda >0$ and sequences $\{ \lambda_n \}_n \subseteq [\Lambda^{-1},\infty)$,
$\{ \xi_n \}_n \subseteq \Lambda [-1,1)^d$, $\{ t_n \}_n \subseteq \mathbb{R}$,
and $\{ x_n \}_n \subseteq \mathbb{R}^d$ such that
\[
	D_{\lambda_n}^{-1} \L_{\xi_n}^{-1} e^{i t_n \langle \nabla \rangle}  T_{x_n}^{-1} v_n \rightharpoonup \phi \quad
	\text{weakly in } L^2
\]
along a subsequence. Further, we let $\eta ({\bf v}):= \sup\limits_{\phi \in \mathcal{V}({\bf v})} \|\phi\|_{L^2}$.
\end{definition}
For a sequence ${\bf v}$ bounded in $H^1$, the case $\eta({\bf v})=0$ corresponds to the vanishing scenario.
We give a control of $\eta({\bf v})$, which is called the inverse Strichartz estimate.
As mentioned in Remark \ref{rem:improve}, the cube decomposition \eqref{eq2.11v165} is sufficient.
\begin{lemma}[Inverse Strichartz estimate]\label{L:ISE}
Let ${\bf w}=\{w_n\}_n$ be a bounded sequence in $H^1$. For any $M>0$ and $\varepsilon>0$, there exists $\alpha=\alpha(M,\varepsilon)>0$ such that
if
\[
	\| w_n \|_{H^1} \le M
\]
and
\[
	\limsup_{n\to\infty} \left\|e^{-it \langle \nabla \rangle} w_n \right\|_{L_{t,x}^\frac{2(d+2)}d} \ge \varepsilon,
\]
then
\[
	\eta ({\bf w}) \ge \alpha.
\]
\end{lemma}
\begin{proof}
Since $\left\| P_{>N} w_n \right\|_{H^{ \frac12}} \le MN^{- \frac12}$, we see from the Strichartz estimate \eqref{eq2.8v161} that there exists $N_0=N_0(M,\varepsilon) \in 2^\mathbb{Z}$ such that
\[
	\limsup_{n\to\infty} \left\|e^{-it \langle \nabla \rangle} P_{\le N_0} w_n \right\|_{L_{t,x}^\frac{2(d+2)}d} \ge \frac{\varepsilon}2.
\]
By \eqref{eq2.11v165}, there exists a dyadic cube $\mathcal{C}_n= \xi_n + \lambda_n^{-1} [-1,1)^d$ with $|\xi_n| \lesssim_{N_0} 1$
and $\lambda_n \gtrsim_{N_0} 1$
such that
\[
 	\lambda_n^{ \frac{d}2 }  \left\| P_{\mathcal{C}_n} e^{-i t \langle \nabla \rangle} w_n \right\|_{L^\I_{t,x}} \gtrsim_{M,\varepsilon} 1.
\]
Thus, one can choose $\left(t_n, x_n \right) \in \mathbb{R}\times \mathbb{R}^d$ so that
\begin{align}\label{eq4.14}
\left|\left(P_{\mathcal{C}_n} e^{i t_n \langle \nabla \rangle} w_n \right)  (-x_n) \right| \gtrsim_{M,\varepsilon}
\lambda_n^{-\frac{d}2}.
\end{align}
With the parameters $\xi_n, \lambda_n, x_n, t_n$ given above, we define
\begin{align}\label{eq4.15}
W_n := D_{\lambda_n}^{-1} \L_{\xi_n}^{-1} e^{i t_n \langle \nabla \rangle}  T_{x_n}^{-1}  w_n.
\end{align}
Since $\xi_n$ is uniformly bounded, $W_n$ is a bounded sequence in $L_x^2$.
Hence, after passing to a subsequence, there is a weak limit ${\phi} \in \mathcal{V}({\bf w})$.
Note that $\eta({\bf w}) \ge \|\phi\|_{L^2}$ by definition of $\eta$.

It suffices to show that there exists $\beta=\beta(M,\varepsilon)>0$ such that $\| \phi \|_{L^2}\ge \beta$.
To this end, we introduce
\begin{align*}
h_n := D_{\lambda_n}^{-1} \L_{\xi_n}^{-1} m_0^{\xi_n}(\nabla)^{-1} \hat{T}_{\xi_n} D_{\lambda_n} \mathcal{F}^{-1} {\bf 1}_{[-1,1)^d},
\end{align*}
where $\hat{T}_\xi = \mathcal{F} ^{-1} T_\xi \mathcal{F} = e^{i x\cdot \xi}$ is a multiplication operator.
In light of Lemma \ref{le2.8}, we see that $h_n$ converges to a function $h \in L^2$ strongly in $L^2$ along a subsequence.
Furthermore, one has $\|h\|_{L^2} \lesssim_{N_0(M,\varepsilon)} 1$.
We remark that
\begin{align*}
	\lambda_n^{\frac{d}2}\left(P_{Q_n} e^{-i t_n \langle \nabla \rangle}    w_n \right)  (-x_n)
	=  {}  &  (2\pi)^{-\frac{d}2}  \lambda_n^{\frac{d}2}  \int_{\mathbb{R}^d}  {\bf 1}_{\xi_n + \lambda_n^{-1}[-1,1)^d}(\xi)
 e^{- ix_n \cdot \xi}\mathcal{F} \left( e^{i t_n \langle \nabla \rangle}    w_n  \right)(\xi) d \xi \\
	={}&  (2\pi)^{-\frac{d}2} \int_{\mathbb{R}^d}  {\bf 1}_{[-1,1)^d}(z) \left(D_{\lambda_n}
T_{\xi_n}^{-1} \mathcal{F} \left( T_{x_n}^{-1} e^{i t_n \langle \nabla \rangle}    w_n \right) \right)(z) d z \\
={}&  (2\pi)^{-\frac{d}2} \left\langle D_{\lambda_n} T_{\xi_n}^{-1} \mathcal{F} \left( T_{x_n}^{-1}e^{i t_n \langle \nabla \rangle}  w_n \right) ,
 {\bf 1}_{[-1,1)^d}  \right\rangle_{L^2},
\end{align*}
where we have applied the change of variable $z=\lambda_n(\xi - \xi_n)$ to obtain the second line.
Plugging the identity $D_\lambda^{-1} = \mathcal{F}^{-1}  D_\lambda \mathcal{F}$ and using the unitarity of
$\mathcal{F}^{-1}$, $D_\lambda$, and $\hat{T}_\xi$ in $L^2$ and \eqref{eq3.14v162}, one sees that the right hand side equals to
\begin{align*}
	&(2\pi)^{-\frac{d}2} \left\langle  T_{x_n}^{-1}e^{i t_n \langle \nabla \rangle}  w_n ,
\hat{T}_{\xi_n} D_{\lambda_n}  \mathcal{F}^{-1} {\bf 1}_{[-1,1)^d}  \right\rangle_{L^2}\\
	= & (2\pi)^{-\frac{d}2} \left\langle \L_{\xi_n} D_{\lambda_n} W_n ,
\hat{T}_{\xi_n} D_{\lambda_n}  \mathcal{F}^{-1} {\bf 1}_{[-1,1)^d}  \right\rangle_{L^2}
 = (2\pi)^{-\frac{d}2} \left\langle W_n , h_n \right\rangle_{L^2}.
\end{align*}
Thus, by means of \eqref{eq4.14}, one has
\begin{align*}
\| \phi \|_{L^2} \gtrsim_{M,\varepsilon}  \left| \langle \phi  , h \rangle_{L^2} \right|
={}& \lim_{n\to\infty}  \left| \left\langle W_n, h_n  \right\rangle_{L^2} \right|\\
\ge {}& (2\pi)^{\frac{d}2}\liminf_{n\to\infty} \lambda_n^{\frac{d}2} \left| \left(P_{Q_n} e^{-i t_n \langle \nabla \rangle}  w_n \right)  (-x_n)\right|
\gtrsim_{M,\varepsilon} 1.
\end{align*}
Hence, the claim is proven.
\end{proof}
We next give two more characterization of the orthogonality of the parameters.
\begin{lemma}\label{L:orthchar}
Let $ \left\{ \left(\lambda_n,t_n,x_n, \nu_n \right) \right\}_{n\ge1}$ and $ \left\{ \left(\tilde\lambda_n,\tilde{t}_n,\tilde{x}_n, \tilde{\nu}_n \right) \right\}_{n\ge1}$
be two sequences of $\R_+ \times \R \times \R^d \times \R^d$ satisfying the normalization rule in Theorem \ref{th5.1}.
Then, the following three are equivalent:
\begin{enumerate}
\item \eqref{eq3.21v172} holds;
\item For any $\phi \in L^2$,
\[
	D_{{\tilde\lambda}_n}^{-1} \L_{{\tilde\nu}_n}^{-1} e^{-i \tilde{t}_n \langle \nabla \rangle}  T_{\tilde{x}_n}^{-1}
	\left(T_{x_n} e^{it_n \langle \nabla \rangle} \L_{\nu_n} D_{\lambda_n} P_n \phi \right) \rightharpoonup 0 \text{ in $L^2$, as $n\to\infty$,}
\]
 where $P_n$ is the projector defined as in \eqref{eq3.17v182} with $\lambda_n$;
\item For any subsequence $\{n_k\}_k$ of $\{n\}_n$, there exists a sequence of functions $\{ u_k \}_k$ bounded in $H^1$ such that, along a subsequence $\{k_\ell \}_\ell$,
\[
	D_{{\tilde\lambda}_{n_{k_\ell}}}^{-1} \L_{{\tilde\nu}_{n_{k_\ell}}}^{-1} e^{-i \tilde{t}_{n_{k_\ell}} \langle \nabla \rangle}  T_{\tilde{x}_{n_{k_\ell}}}^{-1}
	u_{k_\ell} \rightharpoonup 0, \quad
	D_{{\lambda}_{n_{k_\ell}}}^{-1} \L_{{\nu}_{n_{k_\ell}}}^{-1} e^{-i {t}_{n_{k_\ell}} \langle \nabla \rangle}  T_{{x}_{n_{k_\ell}}}^{-1}
	u_{k_\ell} \rightharpoonup u_\infty \neq 0,
\]
weakly in $L^2$ as $\ell \to \infty$.
\end{enumerate}
\end{lemma}
\begin{proof}
(1) $\Rightarrow$ (2).
We omit the subscript $n$ in this part since the role is less important.
Pick two functions $\phi,\psi \in L^2$. The goal is to show that for any $\varepsilon>0$, there exists $K = K(\varepsilon) \ge 1$ such that
\[	I :=  \left\langle D_{{\tilde\lambda}}^{-1} \L_{{\tilde\nu}}^{-1} e^{-i \tilde{t} \langle \nabla \rangle}  T_{\tilde{x}}^{-1}
	\left(T_{x} e^{it \langle \nabla \rangle} \L_{\nu} D_{\lambda} P_{\lambda} \phi \right),  \psi \right\rangle_{L^2}
\]
obeys the bound $|I| \le \varepsilon$ as long as
\[
	\frac{\lambda}{\tilde{\lambda}} + \frac{\tilde{\lambda}}{\lambda} +  \lambda |\nu - \tilde{\nu} | + \frac{|s_\Delta|}{\lambda^2} + \frac{|y_\Delta|} {\lambda} \ge K,
\]
where $P_\lambda$ is a suitable frequency cutoff,
$ \left(-s_\Delta,y_\Delta \right)=L_{\tilde{\nu}} \left(\tilde{t}-t,\tilde{x}-x \right)$, and $\nu, \tilde{\nu} \in B(0,\Lambda)$ for some $\Lambda$.
By the density argument, we may suppose without loss of generality that
$\phi, \psi \in \mathcal{S}$ and $\supp \mathcal{F} \phi, \supp \mathcal{F} \psi \subseteq \overline{ B(0,R_0)} $ for some $R_0 \gg 1$. Furthermore, by this modification, we may replace $P_\lambda$ by the identity.

Let us begin with the proof when $\frac{\lambda}{\tilde{\lambda}} + \frac{\tilde{\lambda}}{\lambda}$ is large.
Note that
if  the Fourier support of a function $f$ is a subset of $B(c,R)$, then those of $D_\lambda f$ and $\L_\nu f$ are included
in $B \left(\frac{c}\lambda , \frac{R}\lambda \right)$ and $B\left(  l_{\nu}(c), 2\langle \nu \rangle R \right)$, respectively.
Hence
\begin{equation}\label{E:orthcharpf12-1}
	\supp \mathcal{F}  \left(D_{{\tilde\lambda}}^{-1} \L_{{\tilde\nu}}^{-1} e^{-i \tilde{t} \langle \nabla \rangle}  T_{\tilde{x}}^{-1}
	\left(T_{x} e^{it \langle \nabla \rangle} \L_{\nu} D_{\lambda} \phi \right) \right)
\subseteq B \left(  \tilde{\lambda} l_{-\tilde{\nu}} \left(l_{\nu} (0) \right) , 4 \langle \tilde{\nu} \rangle \langle \nu \rangle R_0 \tfrac{\tilde\lambda}{\lambda}  \right).
\end{equation}
By Bernstein's inequality and boundedness of $\nu_n$ and $\tilde{\nu}_n$,
\begin{align*}
	|I|
	&{}\le \normo{D_{{\tilde\lambda}}^{-1} \L_{{\tilde\nu}}^{-1} e^{-i \tilde{t} \langle \nabla \rangle}  T_{\tilde{x}}^{-1}
	\left(T_{x} e^{it \langle \nabla \rangle} \L_{\nu} D_{\lambda} \phi \right) }_{L_x^{\frac{2(d+2)}d} } \norm{\psi}_{L_x^{\frac{2(d+2)}{d+4}}}  \\
	&{}\lesssim_\psi
	\left(4 \langle \tilde{\nu} \rangle \langle \nu \rangle R_0 \tfrac{\tilde{\lambda}}{\lambda} \right)^{\frac{d}{d+2}}
	\normo{D_{{\tilde\lambda}}^{-1} \L_{{\tilde\nu}}^{-1} e^{-i \tilde{t} \langle \nabla \rangle}  T_{\tilde{x}}^{-1}
	\left(T_{x} e^{it \langle \nabla \rangle} \L_{\nu} D_{\lambda} \phi \right)}_{L_x^2}   \lesssim_{\Lambda,R_0}
	\left(\tfrac{\tilde{\lambda}}{\lambda}  \right)^{\frac{d}{d+2}} \norm{\phi}_{L^2} \to 0
\end{align*}
as $\frac\lambda{\tilde\lambda}\to\infty$.

Hence, there exists $K_1 = K_1(\varepsilon)$ such that we obtain the desired smallness if $\frac\lambda{\tilde\lambda} \ge K_1$.
On the other hand, in the case when $\frac{\tilde{\lambda}}{\lambda}$ is large, we use the identity
\begin{align*}
I=
\left\langle \phi, D_{{\lambda}}^{-1} \L_{{\nu}}^{-1} \left(m_0^{\nu}(\nabla)^{-1} m_0^{\tilde\nu}(\nabla) \right) e^{-i {t} \langle \nabla \rangle}  T_{{x}}^{-1}
\left(T_{\tilde{x}} e^{i\tilde{t} \langle \nabla \rangle} \L_{\tilde\nu} D_{\tilde\lambda} \psi \right) \right\rangle_{L^2}.
\end{align*}
Since $m_0^{\nu}(\nabla)^{-1} m_0^{\tilde\nu}(\nabla)$ is a multiplication by a bounded factor in the Fourier side,
we obtain
\[
\normo{D_{{\lambda}}^{-1} \L_{{\nu}}^{-1}  \left(m_0^{\nu}(\nabla)^{-1} m_0^{\tilde\nu}(\nabla) \right) e^{-i {t} \langle \nabla \rangle}  T_{{x}}^{-1}
\left(T_{\tilde{x}} e^{i\tilde{t} \langle \nabla \rangle} \L_{\tilde\nu} D_{\tilde\lambda} \psi \right)}_{L_x^{\frac{2(d+2)}d}}
\lesssim  \left(\tfrac{\lambda}{\tilde{\lambda}} \right)^{\frac{d}{d+2}} \norm{\psi}_{L_x^2} \to 0
\]
as $\frac{\tilde\lambda}\lambda\to\infty$, just as in the previous case.
Thus, replacing $K_1$ with a larger one if necessary, we obtain the desired smallness if $\frac{\tilde\lambda}\lambda \ge K_1$.

We suppose $\frac{\lambda}{\tilde{\lambda}} + \frac{\tilde{\lambda}}{\lambda}\le 2K_1$ in what follows.
Let us next consider the case $\lambda= \tilde\lambda = 1$.
In this case, we have $\nu = \tilde{\nu} = 0$ by the normalization condition.
Furthermore, $ \left(-s_\Delta,y_\Delta \right)= \left(\tilde{t}-t,\tilde{x}-x \right)$. One has
\[
I= \left\langle T_{x-\tilde{x}} e^{i \left(t- \tilde{t} \right) \langle \nabla \rangle} \phi, \psi  \right\rangle_{L^2}.
\]
By a standard argument, one sees that there exists $K_2 = K_2(\varepsilon)$ such that the modulus of
the right hand side is smaller than $\varepsilon$ if $ \left|t-\tilde{t} \right| +  \left|x-\tilde{x} \right|\ge K_2$.

Let us move to the case $\lambda \to \infty$. Note that $\tilde\lambda \ge \frac\lambda{2K_1} \to \infty$ by our assumption.
We next consider the case when $\lambda \left|\nu-\tilde{\nu} \right|$ is sufficiently large.
A computation shows
\[
	\tilde{\lambda} \left| l_{-\tilde{\nu}}  \left(l_{\nu} (0) \right) \right|
= \tilde{\lambda} \sqrt{  \left(\langle \nu \rangle \langle \tilde{\nu}\rangle
- \nu \cdot \tilde{\nu} + 1 \right) \left(\langle \nu \rangle \langle \tilde{\nu} \rangle - \nu \cdot \tilde{\nu} - 1 \right) }
	\ge \sqrt{\frac{\min  \left( \langle \nu \rangle , \langle \tilde{\nu} \rangle \right)}{\max  \left( \langle \nu \rangle , \langle \tilde{\nu} \rangle \right)}} \frac{\lambda  \left|\nu-\tilde\nu \right|}{2K_1} .
\]
Hence, there exists $K_3$ such that if $\lambda \left|\nu-\tilde{\nu} \right| \ge K_3$ then $I=0$ is deduced from the disagreement of
the Fourier support. We therefore suppose that $\lambda \left|\nu-\tilde{\nu} \right| \le K_3$. One sees from \eqref{eq3.12v161} that
\[
\L_{{\tilde\nu}}^{-1} e^{-i \tilde{t} \langle \nabla \rangle}  T_{\tilde{x}}^{-1} 	T_{x} e^{it \langle \nabla \rangle}
	= \L_{{\tilde\nu}}^{-1} T_{x-\tilde{x}} e^{i \left(t-\tilde{t} \right) \langle \nabla \rangle}
	= T_{-y_\Delta} e^{i s_\Delta \langle \nabla \rangle} \L_{{\tilde\nu}}^{-1}.
\]
Hence,
\[
	I = \left\langle T_{-{y_\Delta}/{\tilde\lambda }} e^{i  \left(s_\Delta/\tilde\lambda^2 \right) \langle \nabla \rangle}  D_{\tilde{\lambda}}^{-1}  \L_{{\tilde\nu}}^{-1} \L_\nu D_\lambda \phi ,  \psi  \right\rangle_{L^2}.
\]
One verifies that $D_{\tilde{\lambda}}^{-1}  \L_{{\tilde\nu}}^{-1} \L_\nu D_\lambda \phi$ takes value in a precompact set in $L^2$. Hence, there exists $K_4$ such that if $|s_\Delta|/\tilde\lambda^2 + |y_\Delta|/\tilde\lambda \ge K_4$, then
$|I| \le \varepsilon$.

Combining the above together, we prove the existence of the desired $K$.
\medskip

(2) $\Rightarrow$ (3).
Pick $u_\infty \in H^1$, $u_\infty \neq0$, and set $u_k := T_{x_{n_k}} e^{it_{n_k} \langle \nabla \rangle} \L_{\nu_{n_k}} D_{\lambda_{n_k}} P_{n_k} u_\infty$.
\medskip

(3) $\Rightarrow$ (1).
We prove by contradiction. Suppose that (1) fails. Then, there exists a subsequence $\{n_k\}_{k\ge1}$
such that
\[
	\nu_{n_k} \to \nu_\infty\in \R^d, \text{  as }  \tilde{\nu}_{n_k} \to \tilde\nu_\infty
\]
and
\[
	\frac{\lambda_{n_k}}{\tilde\lambda_{n_k}} \to \lambda_* \in (0,\infty), \
	\tilde\lambda_{n_k} \left|\nu_{n_k} - \tilde\nu_{n_k}  \right| \to \nu_*\in [0,\infty),  \
	\frac{s_{\Delta,n_k}}{\lambda_{n_k}^2} \to s_{*}\in \R , \
	\frac{y_{\Delta,n_k}}{\lambda_{n_k} } \to y_{*}\in \R^d, \text{ as $k\to\infty$,}
\]
 where
$ \left(-s_{\Delta,n},y_{\Delta,n} \right)=L_{\tilde{\nu}_n} \left(\tilde{t}_n-t_n,\tilde{x}_n-x_n \right)$.
Along this sequence, the operator
\[
	S_n:= D_{{\tilde\lambda}_n}^{-1} \L_{{\tilde\nu}_n}^{-1} e^{-i \tilde{t}_n \langle \nabla \rangle}  T_{\tilde{x}_n}^{-1}
	T_{x_n} e^{it_n \langle \nabla \rangle} \L_{\nu_n} D_{\lambda_n}
=  T_{-y_{\Delta,n}/{\tilde\lambda}_n} e^{i  \left(s_{\Delta,n}/{\tilde\lambda}_n^2 \right) \langle \nabla \rangle} \L_{{\tilde\nu_n}}^{-1}	
\]
converges to a bounded operator, say $S_\infty$, in the strong operator sense.

Now, we suppose that a bounded sequence $ \left\{u_k \right\}_{k\ge1} \subseteq H^1$ satisfies
\[
	D_{{\tilde\lambda}_{n_{k_\ell}}}^{-1} \L_{{\tilde\nu}_{n_{k_\ell}}}^{-1} e^{-i \tilde{t}_{n_{k_\ell}} \langle \nabla \rangle}  T_{\tilde{x}_{n_{k_\ell}}}^{-1}
	u_{k_\ell} \rightharpoonup 0, \
	D_{{\lambda}_{n_{k_\ell}}}^{-1} \L_{{\nu}_{n_{k_\ell}}}^{-1} e^{-i {t}_{n_{k_\ell}} \langle \nabla \rangle}  T_{{x}_{n_{k_\ell}}}^{-1}
	u_{k_\ell} \rightharpoonup u_\infty, \text{ in } L^2, \text{ as } \ell\to\infty.
\]
Since
\[
D_{{\tilde\lambda}_{n_{k_\ell}}}^{-1} \L_{{\tilde\nu}_{n_{k_\ell}}}^{-1} e^{-i \tilde{t}_{n_{k_\ell}} \langle \nabla \rangle}
 T_{\tilde{x}_{n_{k_\ell}}}^{-1} 	u_{k_\ell}
=S_n D_{{\lambda}_{n_{k_\ell}}}^{-1} \L_{{\nu}_{n_{k_\ell}}}^{-1} e^{-i {t}_{n_{k_\ell}} \langle \nabla \rangle}  T_{{x}_{n_{k_\ell}}}^{-1}
	u_{k_\ell},
\]
we see from the uniqueness of the weak limit that $S_\infty^* u_\infty =0$, where $S_\infty^*$ is an adjoint operator of $S_\infty$. This implies $u_\infty$ must be zero. Hence (3) fails. We complete the proof.
\end{proof}

Now, we are ready to prove Theorem \ref{th5.1}.
\begin{proof}[Proof of Theorem \ref{th5.1}]
Assume ${\bf w}^0 = \left\{v_n\right\}_{n\ge 1}$ is a bounded sequence in $H_x^1(\mathbb{R}^d)$ satisfying $\norm{v_n}_{H^1}\leq A$.
We divide the proof into five steps. In the first three steps, we construct profiles, parameters, and remainders by induction.
The fourth step is devoted to the mutual orthogonality of the parameters.
In the last step we establish the smallness of the remainders. We remark that we freely extract a subsequence of $n$, denoted again by $n$.

{\bf Step 1. Construction of the first profile and the first remainder.}
If $\eta \left({\bf w}^0 \right)=0$, then one has the desired property with the choice $J_0=1$.
Hence, we suppose $\eta \left({\bf w}^0 \right)>0$ in the sequel.

By the definition of $\eta$, there exists $\tilde{\phi}^1 \in \mathcal{V} \left({\bf w}^0 \right) $ such that
$\left\|\tilde{\phi}^1 \right\|_{L^2} \ge \frac12 \eta \left({\bf w}^0 \right)>0$.
By the definition of $\mathcal{V} \left({\bf w}^0 \right)$,
there exists $ \left(\tilde{\lambda}_n^1,\tilde{\xi}_n^1,\tilde{t}_n^1, \tilde{x}_n^1 \right)\in \R_+ \times \R^d \times \R \times \R^d$ such that
\begin{equation}\label{E:w1weaklimit}
D_{\tilde{\lambda}_n^1}^{-1} \L_{\tilde{\xi}_n^1}^{-1} e^{i \tilde{t}_n^1 \langle \nabla \rangle}  T_{\tilde{x}_n^1}^{-1} v_n \rightharpoonup \tilde{\phi}^1 \quad
\text{ weakly in } L^2
\end{equation}
along a subsequence.
Furthermore, $\tilde{\lambda}_n^1$ and $ \left|\tilde{\xi}_n^1 \right|$ are bounded by a positive constant from below and above, respectively.
Further extracting a subsequence if necessary, we have
\[
\tilde{\lambda}_n^1 \to \tilde{\lambda}_\infty^1 \in (0,\infty],\quad
\tilde{\xi}_n^1 \to \tilde{\xi}_\infty^1 \in \mathbb{R}^d, \quad
\left(\tilde{\lambda}_n^1 \right)^{-2}	\tilde{t}_n^1   \to \tilde{\tau}_\infty^1 \in [-\infty,\infty],
\quad
e^{i \tilde{t}_n^1/ \left\langle \xi_n^1  \right\rangle} \to e^{i\theta^1} \in \mathbb{C}.
\]
Now we modify the profile and parameters so that the parameter satisfies the desired property, i.e. either $\lim\limits_{n\to\infty}\lambda_n^1= \infty$ or $\lambda_n^1\equiv 1$, etc.
\begin{itemize}
\item
If $\tilde{\lambda}^1_\infty < \infty$ and $\tilde{\tau}_\infty^1\in \R$, then we take
\[
	\phi^1:=e^{i \tilde{\tau}_\infty^1  \left(\lambda_\infty^1 \right)^2 \langle \nabla \rangle}  \L_{\xi_\infty^1} D_{\lambda_\infty^1}  \tilde{\phi}^1, \quad  \lambda_n^1:=1, \quad \nu_n^1:=0, \quad t_n^1:=0, \quad x_n^1:= \tilde{x}_n^1.
\]
Note that $\tilde{t}_n^1 \to \tilde{\tau}_\infty^1 (\lambda_\infty^1)^2 \in \mathbb{R}$ as $n\to\infty$.
\item If $\tilde{\lambda}^1_\infty < \infty$ and $\tilde{\tau}_\infty^1  = \pm \infty $, then we take
\[
 \phi^1:=  \L_{\xi_\infty^1} D_{\lambda_\infty^1}  \tilde{\phi}^1, \quad  \lambda_n^1:=1, \quad \nu_n^1:=0, \quad t_n^1:=\tilde{t}_n^1,
 \quad x_n^1:= \tilde{x}_n^1.
\]
\item If $\tilde{\lambda}^1_\infty=\infty$ and $\tilde{\tau}_\infty^1\in \R$, then we take
\[
 \phi^1:=e^{-i\theta^1} e^{-i \frac{\tilde{\tau}_\infty^1 \Delta }{2 \left\langle \tilde{\xi}_\infty^1  \right\rangle}}  \tilde{\phi}^1, \quad  \lambda_n^1:=\tilde{\lambda}_n^1, \quad \nu_n^1:=   \tilde{\xi}_n^1, \quad t_n^1:=0, \quad x_n^1:=  \tilde{x}_n^1+ \frac{\tilde{\xi}_n^1}{ \left\langle \tilde{\xi}_n^1  \right\rangle} \tilde{t}_n^1.
\]
\item If $\tilde{\lambda}^1_\infty=\infty$ and
 $\tilde{\tau}_\infty^1= \pm \infty$, then we simply take
\[
 \phi^1:=\tilde{\phi}^1, \quad  \lambda_n^1:=\tilde{\lambda}_n^1, \quad \nu_n^1:=\tilde{\xi}_n^1, \quad t_n^1:=\tilde{t}_n^1, \quad x_n^1:= \tilde{x}_n^1.
\]
\end{itemize}
Note that one has
$\lim\limits_{n\to\infty} t_n^1= \tilde{\tau}_\infty^1 \in \{ \pm \infty\}$ in the second and the fourth cases.
Set $\lambda_\infty^1:= \lim\limits_{n\to\infty} \lambda_n^1 \in \{1,\infty\}$.
It follows from \eqref{eq3.14v162} that
$ \left\| \phi^1  \right\|_{L^2} \gtrsim  \left\langle \xi_\infty^1 \right\rangle^{-1}  \left\|\tilde{\phi}^1 \right\|_{L^2}$.
In particular, $\phi^1 \neq0$.
Let us now prove that
\begin{equation}\label{E:1stweaklimit}
	W_{n}^{0,1}:= D_{{\lambda}_n^1}^{-1} \L_{{\nu}_n^1}^{-1} e^{i {t}_n^1 \langle \nabla \rangle}  T_{x_n^1}^{-1} v_n \rightharpoonup {\phi}^1 \quad
	\text{weakly in } L^2
\end{equation}
along the same subsequence. In the last case, this is nothing but \eqref{E:w1weaklimit}.
Furthermore, one easily verifies it is also in the first two cases by the convergence of the parameters.
Let us consider the third case. By \eqref{eq3.12v161}, we have
\[
 \L_{\tilde{\xi}_n^1}^{-1} e^{i \tilde{t}_n^1 \langle \nabla \rangle} T_{\frac{\tilde{\xi}_n^1}{ \left\langle \tilde{\xi}_n^1 \right\rangle} \tilde{t}_n^1}
 = e^{i \frac{\tilde{t}_n^1}{ \left\langle \tilde{\xi}_n^1  \right\rangle} \langle \nabla \rangle} \L_{\tilde{\xi}_n^1}^{-1},
\]
from which we obtain
\[
	D_{\tilde{\lambda}_n^1}^{-1} \L_{\tilde{\xi}_n^1}^{-1} e^{i \tilde{t}_n^1 \langle \nabla \rangle}  T_{\tilde{x}_n^1}^{-1} v_n
= e^{i \frac{\tilde{t}_n^1}{\langle \xi_n \rangle}  \left\langle  \left(\lambda_n^1 \right)^{-1} \nabla \right\rangle}
\left( D_{{\lambda}_n^1}^{-1} \L_{\nu_n^1}^{-1} e^{i {t}_n^1 \langle \nabla \rangle} T_{x_n^1}^{-1} v_n \right).
\]
Then, one can extract a subsequence so that the strong operator convergence
\[
	e^{i \frac{\tilde{t}_n^1}{ \left\langle \tilde{\xi}_n^1  \right\rangle}  \left\langle  \left(\lambda_n^1 \right)^{-1} \nabla  \right\rangle} =
	e^{i \frac{\tilde{t}_n^1}{ \left\langle \tilde{\xi}_n^1  \right\rangle}  \left( \left\langle  \left(\lambda_n^1 \right)^{-1} \nabla  \right\rangle-1 \right)}
	e^{i \frac{\tilde{t}_n^1}{ \left\langle \tilde{\xi}_n^1  \right\rangle}}
	\to e^{i \frac{\tilde{\tau}_\infty^1 }{2 \left\langle \tilde{\xi}_\infty^1  \right\rangle}\Delta } e^{i \theta^1}
\]
holds as $n\to\infty$ as an operator from $L^2$ into itself. Since the limit operator is unitary in $L^2$, we obtain
\[
	D_{{\lambda}_n^1}^{-1} \L_{\nu_n^1}^{-1} e^{i {t}_n^1 \langle \nabla \rangle} T_{x_n^1}^{-1} v_n
	\rightharpoonup  \Big(e^{i \frac{\tilde{\tau}_\infty^1 }{2 \left\langle \tilde{\xi}_\infty^1 \right\rangle} \Delta } e^{i \theta^1}  \Big)^{-1} \tilde{\phi}^1 = \phi^1
\]
as desired. We obtain \eqref{E:1stweaklimit}.

Furthermore, if $\lambda_n^1\equiv0$, that is $\tilde{\lambda}_\infty^1<\infty$,
one sees from the boundedness of $ \left\{\nu_n^1 \right\}_n$ that
the sequence $ \left\{ D_{{\lambda}_n^1}^{-1} \L_{{\nu}_n^1}^{-1} e^{i {t}_n^1 \langle \nabla \rangle}  T_{x_n^1}^{-1} v_n \right\}_{n\ge1}$ is bounded in $H^1$. Hence, one has $\phi^1 \in H^1$ from \eqref{E:1stweaklimit}.
Furthermore, the same weak convergence as in \eqref{E:1stweaklimit} holds in the weak $H^1$ sense.

Now, we define the remainder term ${\bf w}^1 = \{w_n^1\}_{n\ge1}$ by
\[
w_n^1 := w_n^0 - T_{x_n^1} e^{- i {t}_n^1 \langle \nabla \rangle} \L_{{\nu}_n^1} D_{{\lambda}_n^1}P_n^1 \phi^1.
\]
Then, the decomposition for $J=1$ immediately follows.
Moreover, by the virtue of the presence of $P_n^1$, $ \left\{w_n^1 \right\}$ is bounded in $H^1$.
Furthermore, as $P_n^1$ converges to the identity in the strong operator sense,
we have
\begin{equation}\label{E:2ndweaklimit1}
W_{n}^{1,1}:= D_{{\lambda}_n^1}^{-1} \L_{{\nu}_n^1}^{-1} e^{i {t}_n^1 \langle \nabla \rangle}  T_{x_n^1}^{-1} w_n^1 \rightharpoonup 0
	\text{ in } L^2, \text{ as } n \to \infty.
\end{equation}
Note that if $\lambda_n^1\equiv0$ then the convergence holds weakly in $H^1$ as in \eqref{E:1stweaklimit}.

 {\bf Step 2. Proof of the decoupling identities.}
Let us claim
\EQ{\label{E:decouplingpf1}
\normo{w_n^0}_{L^2}^2={}&\normo{w_n^{1}}_{L^2}^2+\normo{T_{x^1_n} e^{-i t^1_n \langle \nabla \rangle} \L_{\xi^1_n}  D_{\lambda^1_n} P_{n}^1\phi^1}_{L^2}^2+o(1),\\
\normo{w_n^0}_{\dot H^1}^2={}&\normo{w_n^{1}}_{\dot H^1}^2+\normo{T_{x^1_n} e^{-i t^1_n \langle \nabla \rangle} \L_{\xi^1_n}  D_{\lambda^1_n} P_{n}^1 \phi^1}_{\dot H^1}^2+o(1), \text{ as } n \to \infty.
}
For $k=0,1$, we have
\begin{align*}
	\left\langle \nabla^k w_n^0, \nabla^k w_n^0 \right\rangle_{L^2}
	={}&  \left\|  w_n^1 \right\|_{\dot{H}^k}^2 +  \left\| T_{x_n^1} e^{- i {t}_n^1 \langle \nabla \rangle}  \L_{{\nu}_n^1}
	D_{{\lambda}_n^1}P_n^1 \phi^1  \right\|_{\dot{H}^k}^2 \\
	& +  2 \Re  \left\langle \nabla^k w_n^1, \nabla^k T_{x_n^1} e^{- i {t}_n^1 \langle \nabla \rangle} \L_{{\nu}_n^1}
	D_{{\lambda}_n^1}P_n^1 \phi^1   \right\rangle_{L^2}.
\end{align*}
Hence, it suffices to show that the last term of the right hand tends to zero as $n\to\infty$.
If $\lambda_n^1\equiv0$, then it is a direct consequence of $\nu_n^1 \equiv 0$ and the fact that the weak convergence \eqref{E:1stweaklimit} holds weakly in $H^1$. We consider the case $\lambda_n^1\to\infty$ as $n\to\infty$, one has
\begin{align*}
\left\langle w_n^1 , T_{x_n^1} e^{- i {t}_n^1 \langle \nabla \rangle} \L_{{\nu}_n^1}
	D_{{\lambda}_n^1}P_n^1 \phi^1  \right\rangle_{{H}_x^k}
& = \left\langle W_n^{1,1} , D_{\lambda_n}^{-1} \langle \nabla \rangle^{2k} m_k^{\nu_n^1}(\nabla)^{-1} D_{\lambda_n^1} P_{\le (\lambda_n^1)^\theta} \phi \right\rangle_{L^2}.
\end{align*}
Since
\[
D_{\lambda_n}^{-1}  \langle \nabla \rangle^{2k} m_k^{\nu_n^1}(\nabla)^{-1} D_{\lambda_n^1} P_{\le \left(\lambda_n^1 \right)^\theta} \to \langle \nu_\infty^1 \rangle^{1-2k}
\]
in the strong operator sense in $\mathcal{L} \left(L^2 \right)$, we see from \eqref{E:2ndweaklimit1} that
\[
	\Re  \left\langle \nabla^k w_n^1, \nabla^k T_{x_n^1} e^{- i {t}_n^1 \langle \nabla \rangle} \L_{{\nu}_n^1}
	D_{{\lambda}_n^1}P_n^1 \phi^1    \right\rangle_{L^2} \to 0, \text{ as $n\to\infty$.}
\]
Therefore, we obtain \eqref{E:decouplingpf1}. It also yields
\EQ{
	\limsup_{n\to\infty} \normo{w_n^1}_{H^1} \le  \limsup_{n\to\infty} \normo{w_n^0}_{H^1} = A.
}
Furthermore, mimicking the proof of the claim one also has
\begin{equation}\label{E:PDsmallnesspf1}
	\normo{w_n^0}_{H^{ \frac12}}^2=  \normo{w_n^{1}}_{H^{ \frac12}}^2+\normo{T_{x^1_n} e^{-i t^1_n \langle \nabla \rangle} \L_{\xi^1_n}  D_{\lambda^1_n} P_{n}^1 \phi^1}_{H^{ \frac12}}^2 + o(1), \text{ as $n\to\infty$.}
\end{equation}
We now turn to the energy decoupling for $J=1$.
By \eqref{E:decouplingpf1}, it is enough to prove
\begin{align}\label{E:energydcpf1}
 \left\|\Re w_n^0  \right\|_{L_x^\frac{2(d+2)}d}^\frac{2(d+2)}d -  \left\|\Re w_n^1  \right\|_{L_x^\frac{2(d+2)}d}^\frac{2(d+2)}d
 -  \left\|\Re  \left(T_{x^1_n} e^{-i t^1_n \langle \nabla \rangle} \L_{\xi^1_n}  D_{\lambda^1_n} P_{n}^1 \phi^1 \right) \right\|_{L_x^\frac{2(d+2)}d}^\frac{2(d+2)}d \to 0,
  \text{ as $n\to\infty$.}
\end{align}
When $\lambda_n^1 \equiv 1$, $\nu_n^1 \equiv 0$, $t_n^1 \equiv 0$, we see
from  \eqref{E:1stweaklimit} and \eqref{E:2ndweaklimit1} that
\begin{align*}
	& \left\|\Re w_n^0  \right\|_{L_x^\frac{2(d+2)}d} =
	\left\|\Re T_{x_n^1}^{-1}w_n^0  \right\|_{L_x^\frac{2(d+2)}d}
\to
	\left\|\Re \phi^1  \right\|_{L_x^\frac{2(d+2)}d}
\intertext{ and }
& \left\|\Re w_n^1 \right\|_{L_x^\frac{2(d+2)}d} =
	\left\|\Re T_{x_n^1}^{-1}w_n^1  \right\|_{L_x^\frac{2(d+2)}d} \to 0, \text{ as $n\to\infty$.}
\end{align*}
Together with
\[
	\left\|\Re  \left(T_{x^1_n} \phi^1 \right)  \right\|_{L_x^\frac{2(d+2)}d} =
	 \left\|\Re \phi^1 \right\|_{L_x^\frac{2(d+2)}d},
\]
we have \eqref{E:energydcpf1}.

When $\lambda_n^1 \equiv 1$, $\nu_n^1 \equiv 0$, $t_n^1 \to t_\infty^1 \in \{\pm \infty \}$, we see
from the dispersive estimate that
\[
\left\|\Re  \left(T_{x^1_n} e^{-i t^1_n \langle \nabla \rangle} \phi^1 \right)  \right\|_{L_x^\frac{2(d+2)}d}
\le  \left\|e^{-i t^1_n \langle \nabla \rangle} \phi^1  \right\|_{L_x^\frac{2(d+2)}d}
\to 0, \text{ as $n\to\infty$.}
\]
 Hence, together with the embedding $ H^1  \hookrightarrow  L^{ \frac{2(d+2)}d}  $ and
the uniform boundedness of $ \left\{w_n^0 \right\}_{n\ge1}$ and $ \left\{w_n^1 \right\}_{n \ge 1}$, one has
\begin{align*}
& \left\lvert
	\left\|\Re w_n^0 \right\|_{L_x^\frac{2(d+2)}d}^\frac{2(d+2)}d -  \left\|\Re w_n^1  \right\|_{L_x^\frac{2(d+2)}d}^\frac{2(d+2)}d -  \left\|\Re  \left(T_{x^1_n} e^{-i t^1_n \langle \nabla \rangle} \phi^1 \right)  \right\|_{L_x^\frac{2(d+2)}d}^\frac{2(d+2)}d
	\right\rvert\\
& \lesssim_d  \left(  \left\|w_n^0 \right\|_{L_x^\frac{2(d+2)}d} +  \left\|w_n^1 \right\|_{L_x^\frac{2(d+2)}d} \right)^{\frac{d+4}{d}}
	\left\|\Re  \left(T_{x^1_n} e^{-i t^1_n \langle \nabla \rangle} \phi^1 \right)  \right\|_{L_x^\frac{2(d+2)}d}
	\to 0, \text{ as $n\to\infty$.}
\end{align*}
When $\lambda_\infty^1 = \infty$, by Bernstein's inequality, \eqref{eq3.14v162},
 $\sup\limits_\xi \left|\partial_{\xi_j} l_{\nu_n^1}(\xi)\right| \lesssim  \left\langle \nu_n^1  \right\rangle$,
 and the boundedness of $\nu_n^1$, we have
\begin{align*}
&  \left\| e^{-i t^1_n \langle \nabla \rangle} \L_{\xi^1_n}  D_{\lambda^1_n} P_{n}^1 \phi^1  \right\|_{L_x^\frac{2(d+2)}d}\\
&  \lesssim \left( \diam \left(\supp \mathcal{F}\left( e^{-i t^1_n \langle \nabla \rangle} \L_{\xi^1_n}  D_{\lambda^1_n} P_{n}^1 \phi^1 \right) \right) \right)^{\frac{d}{d+2}} \left\|e^{-i t^1_n \langle \nabla \rangle} \L_{\xi^1_n}  D_{\lambda^1_n} P_{n}^1 \phi^1 \right\|_{L_x^2}\\
& =  \left( \diam \left( \supp  \mathcal{F}\left( \L_{\xi^1_n}  D_{\lambda^1_n} P_{n}^1 \phi^1 \right)\right) \right)^\frac{d}{d+2} \left\| \L_{\nu_n^1} D_{\lambda_n^1} P_n^1\phi^1 \right\|_{L_x^2} \\
&  \lesssim \left(   \left\langle \nu_n^1  \right\rangle \diam\left( \supp \left(\mathcal{F}
\left( D_{\lambda_n^1} P_n^1 \phi^1\right)\right)\right)\right)^\frac{d}{d+2} \langle \nu_n^1 \rangle  \left\|\phi^1 \right\|_{L_x^2}
 \lesssim_{\sup |\nu_n^1|}  \left(\lambda_n^1 \right)^{\frac{d(\theta - 1)}{d+2}}  \left\| \phi^1  \right\|_{L_x^2} \to 0
\end{align*}
as $n\to\infty$. Then, we obtain \eqref{E:energydcpf1} as in the previous case.

{\bf Step 3. Construction of profiles and remainders by induction.}
Let us construct the other profiles and the remainders by induction.

Suppose that $\eta({\bf w}^k)>0$ for some $k\ge1$.
By the definition of $\eta$, there exists $\tilde{\phi}^{k+1} \in \mathcal{V} \left({\bf w}^k \right) $ such that
$ \left\|\tilde{\phi}^{k+1} \right\|_{L^2} \ge \frac12 \eta \left({\bf w}^k \right)>0$.
By definition of $\mathcal{V} \left({\bf w}^k \right)$, there exists $ \left(\tilde{\lambda}_n^{k+1},\tilde{\xi}_n^{k+1},\tilde{t}_n^{k+1}, \tilde{x}_n^{k+1} \right)\in \R_+ \times \R^d \times \R \times \R^d$ such that
\begin{equation}\label{E:wkweaklimit_tmp}
	D_{\tilde{\lambda}_n^{k+1}}^{-1} \L_{\tilde{\xi}_n^{k+1}}^{-1} e^{i \tilde{t}_n^{k+1} \langle \nabla \rangle}  T_{\tilde{x}_n^{k+1}}^{-1} w_n^k \rightharpoonup \tilde{\phi}^{k+1}
	\text{ in } L^2
\end{equation}
along a subsequence. Furthermore, $\tilde{\lambda}_n^{k+1}$ and $\big|\tilde{\xi}_n^{k+1} \big|$ are bounded by a positive constant from below and above, respectively.
Mimicking the argument in Step 1, one obtains $\phi^{k+1} \in L^2$, $\phi^{k+1}\neq0$,
and the parameter $ \left(\lambda_n^{k+1},\nu_n^{k+1},t_n^{k+1},x_n^{k+1} \right)$ satisfying the property of the theorem such that
\begin{equation}\label{E:kthweaklimit1}
	W_{n}^{k,{k+1}}:= D_{{\lambda}_n^{k+1}}^{-1} \L_{{\nu}_n^{k+1}}^{-1} e^{i {t}_n^{k+1} \langle \nabla \rangle}  T_{x_n^{k+1}}^{-1} w_n^k \rightharpoonup {\phi}^{k+1}
	\text{  in } L^2
\end{equation}
along a subsequence.
If $\lambda_n^{k+1} \equiv 1$, then $\phi^{k+1} \in H^1$ and
the weak convergence \eqref{E:kthweaklimit1} holds weakly in $H^1$.
By using the parameter, we define the remainder term
${\bf w}^{k+1} = \left\{w_n^{k+1} \right\}_{n\ge1}$ by
\begin{equation}\label{E:remainderdif}
	w_n^{k+1} := w_n^k - T_{x_n^{k+1}} e^{- i {t}_n^{k+1} \langle \nabla \rangle} \L_{{\nu}_n^{k+1}}
	D_{{\lambda}_n^{k+1}}P_n^{k+1} \phi^{k+1}.
\end{equation}
This is a bounded sequence in $H^1$:
\[
\limsup_{n\to\infty} \big\|w_n^{k+1} \big\|_{H^1} \le \limsup_{n\to\infty} \big\|w_n^{k}\big\|_{H^1}.
\]
Furthermore,
\begin{equation}\label{E:k+1thweaklimit1}
	W_{n}^{{k+1},{k+1}}:= D_{{\lambda}_n^{k+1}}^{-1} \L_{{\nu}_n^{k+1}}^{-1} e^{i {t}_n^{k+1} \langle \nabla \rangle}  T_{x_n^{k+1}}^{-1} w_n^{k+1} \rightharpoonup 0 \text{ in } L^2.
\end{equation}
It holds weakly in $H^1$ if $\lambda_n^{k+1}\equiv1$.
Arguing as in Step 2, we have
\EQ{\label{E:decouplingpf2}
\big\|w_n^k\big\|_{L^2}^2={}&\big\|w_n^{k+1}\big\|_{L^2}^2+\normo{T_{x^{k+1}_n} e^{-i t^{k+1}_n \langle \nabla \rangle} \L_{\nu^{k+1}_n}  D_{\lambda^{k+1}_n} P_{n}^{k+1}\phi^{k+1}}_{L^2}^2+o(1),\\
\big\|w_n^k\big\|_{\dot H^1}^2={}&\big\|w_n^{k+1}\big\|_{\dot H^1}^2+\normo{T_{x^{k+1}_n} e^{-i t^{k+1}_n \langle \nabla \rangle} \L_{\nu^{k+1}_n}  D_{\lambda^{k+1}_n} P_{n}^{k+1} \phi^{k+1}}_{\dot H^1}^2+o(1),
}
\begin{align}\label{E:energydcpf2}
\left\|\Re w_n^k  \right\|_{L_x^\frac{2(d+2)}d}^\frac{2(d+2)}d -  \left\|\Re w_n^{k+1} \right\|_{L_x^\frac{2(d+2)}d}^\frac{2(d+2)}d -  \left\|\Re  \left(T_{x^{k+1}_n} e^{-i t^{k+1}_n \langle \nabla \rangle} \L_{\nu^{k+1}_n}  D_{\lambda^{k+1}_n} P_{n}^{k+1} \phi^{k+1} \right)
\right \|_{L_x^\frac{2(d+2)}d}^\frac{2(d+2)}d = o(1),
\end{align}
and
\begin{equation}\label{E:PDsmallnesspf2}
	\normo{w_n^k}_{H^{ \frac12}}^2=\normo{w_n^{k+1}}_{H^{\frac12}}^2+\normo{T_{x^{k+1}_n} e^{-i t^{k+1}_n \langle \nabla \rangle} \L_{\nu^{k+1}_n}  D_{\lambda^{k+1}_n} P_{n}^{k+1} \phi^{k+1}}_{H^{ \frac12}}^2 + o(1)
\end{equation}
as $n\to \infty$.

We repeat the above procedure so long as $\eta \left({\bf w}^{k} \right)>0$.
If $\eta  \left({\bf w}^{k_0} \right)=0$ holds for some $k_0\ge1$, we define $J_0=k_0+1\ge 2$.
Otherwise, let $J_0=\infty$.
Combining \eqref{E:remainderdif} and recalling $w_n^0=v_n$, we obtain the desired decomposition \eqref{eq3.17v173} for all $J \in [1,J_0-1]$.
Similarly, by \eqref{E:decouplingpf1}, \eqref{E:decouplingpf2}, \eqref{E:energydcpf1}, and \eqref{E:energydcpf2},
we have \eqref{eq3.16v162}, \eqref{eq3.17v162} and \eqref{eq3.1v141} for all $J \in [1,J_0-1]$.

 {\bf Step 4. Orthogonality of the parameters.}
Let us establish the mutual orthogonality of the parameters.
We see from \eqref{E:1stweaklimit} and \eqref{E:kthweaklimit1}
that $W_n^{k,k}\rightharpoonup 0$ in $L^2$ for $1\le k \le J_0-1$.
Now we show by induction on $a \in \Z_{>0}$ such that
\begin{equation}\label{E:orthpf1}
W_{k,k+a}:= D_{{\lambda}_n^{k+a}}^{-1}  \L_{{\nu}_n^{k+a}}^{-1} e^{i {t}_n^{k+a} \langle \nabla \rangle}  T_{x_n^{k+a}}^{-1} w_n^k \rightharpoonup \phi^{k+a} \neq0
\end{equation}
holds weakly in $L^2$ for $a\ge1$ and $0\le k \le J_0-1-a$.
If we obtain \eqref{E:orthpf1}, then by means of ``(3)$\Rightarrow$(1)'' of Lemma \ref{L:orthchar}, one obtains the desired
orthogonality of the parameters.

Let us prove \eqref{E:orthpf1}. For simplicity, we consider the case $J_0=\infty$.
The base case $a=1$ follows from \eqref{E:2ndweaklimit1} and \eqref{E:k+1thweaklimit1}.
Pick $a_0 \ge 1$ and suppose that \eqref{E:orthpf1} holds as long as $1 \le a \le a_0$.
Then, by \eqref{E:remainderdif}, we have
\begin{align*}
	W_{k,k+a_0+1}={}& W_{k+1,k+a_0+1} \\&{}+ D_{{\lambda}_n^{k+a_0+1}}^{-1} \L_{{\nu}_n^{k+a_0+1}}^{-1} e^{i {t}_n^{k+a_0+1} \langle \nabla \rangle}  T_{x_n^{k+a_0+1}}^{-1}
\left(T_{x_n^{k+1}} e^{- i {t}_n^{k+1} \langle \nabla \rangle} \L_{{\nu}_n^{k+1}}
	D_{{\lambda}_n^{k+1}}P_n^{k+1} \phi^{k+1} \right).
\end{align*}
By assumption of the induction together with (2) of Lemma \ref{L:orthchar},
one sees that
\[
W_{k+1,k+a_0+1} \rightharpoonup \phi^{k+a_0+1}
\]
and
\[
D_{{\lambda}_n^{k+a_0+1}}^{-1} \L_{{\nu}_n^{k+a_0+1}}^{-1} e^{i {t}_n^{k+a_0+1} \langle \nabla \rangle}  T_{x_n^{k+a_0+1}}^{-1}
\left(T_{x_n^{k+1}} e^{- i {t}_n^{k+1} \langle \nabla \rangle} \L_{{\nu}_n^{k+1}}
	D_{{\lambda}_n^{k+1}}P_n^{k+1} \phi^{k+1} \right)\rightharpoonup 0
\]
%hold
weakly in $L^2$ as $n\to\infty$. Since $k$ is arbitrary, we have \eqref{E:orthpf1} for $a=a_0+1$.
Thus, by induction we have \eqref{E:orthpf1} for all $a\ge1$.

 {\bf Step 5. Smallness of the remainder term.}
To complete the proof, we show \eqref{eq3.21v173}.
For this purpose, we first prove
\EQ{\label{E:LPDremainder2}
	\lim_{J\to J_0-1} \eta  \left({\bf w}^J \right) =0.
}
If $J_0$ is finite, then this is true by the definition of $J_0$. Suppose $J_0=\infty$.
Combining \eqref{E:PDsmallnesspf1} and \eqref{E:PDsmallnesspf2}, we have for $J\ge 1$,
\begin{equation}\label{E:PDsmallnesspf3}
\norm{v_n}_{H^{ \frac12}}^2
= \sum_{j=1}^J \normo{T_{x^j_n} e^{-i t^j_n \langle \nabla \rangle} \L_{\nu^j_n}  D_{\lambda^j_n} P_{n}^j \phi^j}_{H^{ \frac12}}^2 + o(1),
 \text{ as $n\to \infty$.}
\end{equation}
Let us claim
\[
\normo{T_{x^j_n} e^{-i t^j_n \langle \nabla \rangle} \L_{\nu^j_n}  D_{\lambda^j_n} P_{n}^j \phi^j}_{H^{ \frac12}} \ge \normo{\tilde{\phi}^j}_{L^2}.
\]
We prove it for $j=1$.
Recalling the definition of the parameters and using the fact that $\L_{\nu}$ is unitary in $H^{\frac12}$, we see that
\[
	\normo{T_{x^1_n} e^{-i t^1_n \langle \nabla \rangle} \L_{\nu^1_n}  D_{\lambda^1_n} P_{n}^1 \phi^1}_{H^{ \frac12}}
	= \normo{D_{\lambda^1_n} P_{n}^1 \phi^1}_{H^{ \frac12}}.
\]
If $\lambda_n^j \equiv 1$, then
\[
\normo{D_{\lambda^1_n} P_{n}^1 \phi^1}_{H^{ \frac12}}
= \normo{\phi^1}_{H^{ \frac12}} = \normo{D_{\lambda_\infty^1} \tilde{\phi}^1}_{H^{ \frac12}}
\ge \normo{D_{\lambda_\infty^1} \tilde{\phi}^1}_{L^2}
= \normo{\tilde{\phi}^1}_{L^2}
\]
If $\lambda_n^j \to \infty$ as $n\to\infty$, then
\[
\normo{D_{\lambda^j_n} P_{n}^j \phi^j}_{H^{ \frac12}} \ge \normo{D_{\lambda^j_n} P_{n}^j \phi^j}_{L^2}
= \normo{\tilde{\phi}^j}_{L^2}.
\]
Hence, the claim follows.
Thus, plugging the identity of the claim to \eqref{E:PDsmallnesspf3}, taking supremum in $n$,
and letting $J\to\infty$, one obtains
\[
\sum_{j=1}^\infty \big\|\tilde{\phi}^j \big\|_{L^2}^2 \le \sup_{n}\normo{v_n}_{H^{ \frac12}} \le A <\infty.
\]
This shows $\big\|\tilde{\phi}^j \big\|_{L^2}\to 0$ as $j\to \infty$. Hence,
\begin{equation}\label{E:PDsmallnesspf4}
	\eta  \left({\bf w}^J  \right) \le 2 \normo{ \phi^{J+1} }_{L^2} \to 0, \text{ as $J \to \infty$.}
\end{equation}
 This is \eqref{E:LPDremainder2}.

If $J_0$ is finite, then \eqref{eq3.21v173} follows from \eqref{E:LPDremainder2},
thanks to Lemma \ref{L:ISE}. Let us consider the case $J_0=\infty$.
Suppose that \eqref{eq3.21v173} fails. Then, there exist $\varepsilon_0>0$ and a sequence
$ \left\{J_k \right\}_{k\ge1}$ with $\lim\limits_{k\to\infty}J_k = \infty$ such that
\[
	\limsup_{n\to\infty} \left\|e^{-it \langle \nabla \rangle} w_n^{J_k} \right\|_{L_{t,x}^\frac{2(d+2)}d} \ge \varepsilon_0
\]
for all $k\ge1$. Together with the bound
\[
	\limsup_{n\to\infty} \normo{w_n^{J_k}}_{H^1} \le \limsup_{n\to\infty} \normo{v_n}_{H^1} \le A,
\]
we see from Lemma \ref{L:ISE} that there exits $\alpha=\alpha \left(M,\varepsilon_0 \right)>0$ such that
$\inf\limits_{k} \eta \left({\bf w}^{J_k} \right) \ge \alpha$. However, this contradicts with \eqref{E:PDsmallnesspf4}.
Thus, we obtain \eqref{eq3.21v173}.
\end{proof}

\section{Low-frequency nonlinear profile: proof of Theorem \ref{th6.2} }\label{s6}
In this section, we will prove Theorem \ref{th6.2}. We study the large scale profile, and using the solution of the mass-critical nonlinear Schr\"odinger equation to approximate the large scale profile. Throughout this section, we write $f(z) = |z|^\frac4d z$.
Before presenting the main result in this section, we first review the global well-posedness and scattering result of the mass-critical nonlinear Schr\"odinger equation
\begin{align}\label{eq6.1}
i\partial_t w+ \frac12 \Delta w  = \mu C_d f(w),
\end{align}
where $\mu = \pm 1$ and the constant $C_d$ is the well-known Wallis integral
\begin{align}\label{eq4.2v91}
C_d : = \frac1{2^{2+ \frac4d}\pi} \int_0^{2\pi} f\left(1 + e^{i\theta}  \right) \,\mathrm{d}\theta
= \frac{\Gamma\left( \frac2d + \frac32 \right) }{  \sqrt{\pi} \Gamma\left( \frac2d + 2\right)} < \frac12.
\end{align}
In particular, we see $C_1 = \frac5{16}$, and $C_2 = \frac38$. For reader's convenience,
we give the computation of \eqref{eq4.2v91} in Appendix A.1. When $\mu = -1$, the ground state solution associated to \eqref{eq6.1} is
\begin{align*}
w_Q(t,x) : = e^{it } \left( \frac1{C_d}\right)^\frac{d}4 Q\left(\sqrt{2}x\right),
\end{align*}
with
\begin{align*}
\left\|w_Q \right\|_{L_x^2}  & =  \left(2C_d \right)^{-\frac{d}4} \| Q\|_{L_x^2},
\end{align*}
where $Q$ is the ground state of \eqref{eq1.4}. For the mass-critical nonlinear Schr\"odinger equation, we have the following result:
\begin{theorem}[Global well-posedness and scattering of the mass-critical NLS, \cite{D3,D1,D2,D4,KTV,KVZ,TVZ0,TVZ}]\label{co6.1}
For any $w_0 \in L_x^2(\mathbb{R}^d)$ and when $\mu = -1$,
we also assume $ \left\|w_0 \right\|_{L_x^2} < \left({2C_d} \right)^{-\frac{d}4}  \|Q\|_{L_x^2} $,
there exists a unique global solution $w$ to \eqref{eq6.1} with $w(0) = w_0$, and
\begin{align*}
\|w\|_{L_{t,x}^\frac{2(d+2)}d(\mathbb{R} \times \mathbb{R}^d)} \le C\left(\|w_0\|_{L_x^2}\right),
\end{align*}
for some continuous function $C$. Moreover, $w$ scatters in $L^2$,
\end{theorem}
We now turn to the proof of Theorem \ref{th6.2}.

\textit{Proof.} By \eqref{eq3.12v161}, we have
\begin{align*}
\phi_n = \L_{\nu_n} T_{\tilde{x}_n} e^{i\tilde{t}_n \langle \nabla \rangle} D_{\lambda_n} P_{\le \lambda_n^\theta} \phi.
\end{align*}
We will take $x_n = \frac{ \nu_n t_n}{ \langle \nu_n \rangle}$ by the spatial translation invariance,
and this leads to $\tilde{x}_n = 0$ and $\tilde{t}_n = \frac{t_n}{ \langle \nu_n \rangle}$.

\textit{Case I. $\nu_n = 0$. }
To show \eqref{eq6.2}, we only need to show
\begin{align}\label{eq6.3}
\left\|v_n(t + t_n, x) -  {e^{-it}}{\lambda_n^{- \frac{d}2}} \psi_\epsilon \left( {\lambda_n^{-2} }{t}, {\lambda_n^{-1} }{x} \right) \right\|_{L_{t,x}^\frac{2(d+2)}d(\mathbb{R}\times \mathbb{R}^d)} < \epsilon.
\end{align}
Before giving the approximate solutions to \eqref{eq2.2}, we first define the solutions to \eqref{eq6.1}, which will be the building block.

When $t_n = 0$, let $w_n$ be the solution to \eqref{eq6.1} with $w_n(0) = P_{\le \lambda_n^\theta} \phi$,
and correspondingly, we let $w_\infty$ be the solution to \eqref{eq6.1} with $w_\infty(0) = \phi$.

In the case when $\frac{t_n}{\lambda_n^2} \to \infty$ (respectively $\frac{t_n}{\lambda_n^2} \to -\infty$), we denote by $w_n$ the solutions to \eqref{eq6.1}, that scatter backward (respectively forward) in time to $e^{it \frac\Delta2} P_{\le \lambda_n^\theta} \phi$. In the same time, we define $w_\infty$ to be the solution to \eqref{eq6.1} that scatters backward (respectively forward) in time to $e^{it \frac\Delta2} \phi$.
By Theorem \ref{co6.1}, we have
\begin{align*}
S_{\mathbb{R}}(w_n) + S_{\mathbb{R}}(w_\infty) \lesssim_{\|\phi\|_{L^2}} 1.
\end{align*}
We also have the following space-time boundedness of the sequence $w_n$ by direct computation, which will be useful later in this section.
\begin{lemma}[Boundedness of the Strichartz norms]\label{le6.4}
The solutions $w_n$ satisfy
\begin{align}\label{eq6.4}
\big\| |\nabla |^s w_n \big\|_{L_t^\infty L_x^2 \cap L_{t,x}^\frac{2(d+2)}d} \lesssim_{\|\phi\|_{L^2}} \lambda_n^{s \theta},
\end{align}
for any $ 0 \le s < 1+ \frac4d$ and
\begin{align}\label{eq5.4v185}
\left\|\langle \nabla \rangle^s \partial_t w_n \right\|_{L_{t,x}^\frac{2(d+2)}d} \lesssim_{\|\phi\|_{L^2}} \lambda_n^{(2+s) \theta}
\end{align}
for any $0 \le s < \frac4d$.
Moreover, we also have the approximation
\begin{align}\label{eq6.6}
\left\|w_n - w_\infty\right\|_{L_t^\infty L_x^2 \cap L_{t,x}^\frac{2(d+2)}d} + \left\|D_{\lambda_n} (w_n - P_{\le \lambda_n^\theta} w_\infty) \right\|_{L_t^\infty H_x^\frac12} \to 0, \text{as } n\to \infty.
\end{align}
\end{lemma}
We can now construct the following approximate solutions to \eqref{eq2.2}:
\begin{align*}
\tilde{v}_n(t) : =
\begin{cases}
e^{-it } D_{\lambda_n}  \left(P_{\le \lambda_n^{2\theta} }w_n \right)\left( \frac{t}{ \lambda_n^2} \right), &  \text{ if } |t| \le  \lambda_n^2 T ,\\
e^{-i \left(t-  \lambda_n^2 T \right)  \langle \nabla \rangle} \tilde{v}_n \left( \lambda_n^2 T \right), & \text{ if } t >  \lambda_n^2 T ,\\
e^{-i \left(t +  \lambda_n^2 T \right) \langle \nabla \rangle} \tilde{v}_n \left(-  \lambda_n^2 T\right), &  \text{ if } t < -  \lambda_n^2 T,
\end{cases}
\end{align*}
where $T$ is a sufficiently large positive number to be specified later. We will show this sequence approximately solves \eqref{eq2.2}, and by invoking Proposition \ref{pr3.4} to deduce that the resulting solutions $v_n$ obey \eqref{eq6.2}. By the Strichartz estimate and Lemma \ref{le6.4}, we have
\begin{align}
\|\tilde{v}_n \|_{L_t^\infty H_x^\frac12 \cap L_{t,x}^\frac{2(d+2)}d}
& \lesssim \|D_{\lambda_n} w_n \|_{L_t^\infty H_x^\frac12} + \left\|D_{\lambda_n} w_n\left(\frac{t}{ \lambda_n^2} \right) \right\|_{L_{t,x}^\frac{2(d+2)}d} \notag \\
& \lesssim_{\|\phi\|_{L_x^2} } 1 + \lambda_n^{- \frac12 } \left\| |\nabla |^\frac12 w_n \right\|_{L_t^\infty L_x^2}
 \lesssim_{ \|\phi\|_{L_x^2} } 1 + \lambda_n^{- \frac{1 - \theta}2   } \lesssim_{ \|\phi\|_{L_x^2} } 1. \notag
\end{align}
By the definition of $\phi_n$ and also \eqref{eq6.6}, we can get
\begin{lemma}[Approximation of the initial data]\label{le6.5}
\begin{align*}
\limsup\limits_{n \to \infty} \left\|\tilde{v}_n(- t_n) - \phi_n \right\|_{H_x^\frac12} \to 0, \text{ as } {T \to \infty}.
\end{align*}
\end{lemma}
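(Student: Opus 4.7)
The plan is to split into two cases depending on whether $t_n = 0$ or $t_n/\lambda_n^2 \to \pm\infty$. When $t_n = 0$, the NLS data $w_n(0) = P_{\le \lambda_n^\theta}\phi$ is already Fourier-localized in $\{|\xi|\le \lambda_n^\theta\}$, so $P_{\le \lambda_n^{2\theta}} w_n(0) = w_n(0)$ and therefore $\tilde{v}_n(0) = D_{\lambda_n} P_{\le \lambda_n^\theta}\phi$ coincides exactly with $\phi_n = D_{\lambda_n} P_{\le \lambda_n^\theta}\phi$; there is nothing to prove. In the unbounded regime, I may assume $t_n/\lambda_n^2 \to +\infty$ by symmetry, so $-t_n < -T\lambda_n^2$ for all sufficiently large $n$. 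Unfolding the definition of $\tilde{v}_n$ and peeling off the unitary factor $e^{it_n\langle\nabla\rangle}$, the intertwining identity $e^{-iT\lambda_n^2(\langle\nabla\rangle - 1)} D_{\lambda_n} = D_{\lambda_n}\, e^{-iT\lambda_n^2(\langle\lambda_n^{-1}\nabla\rangle - 1)}$ (a Fourier substitution) reduces the bound to
\[
\bigl\| D_{\lambda_n}\bigl[ e^{-iT\lambda_n^2 (\langle \lambda_n^{-1}\nabla\rangle - 1)} P_{\le \lambda_n^{2\theta}} w_n(-T) - P_{\le \lambda_n^\theta}\phi \bigr] \bigr\|_{H^{1/2}}.
\]

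From the scaling inequality $\|D_{\lambda_n} g\|_{H^{1/2}}^2 \lesssim \|g\|_{L^2}^2 + \lambda_n^{-1} \|g\|_{\dot{H}^{1/2}}^2$ and the Bernstein bound $\|h\|_{\dot{H}^{1/2}} \lesssim \lambda_n^\theta \|h\|_{L^2}$ on the Fourier support $\{|\xi|\le \lambda_n^{2\theta}\}$, the homogeneous piece contributes at most $O(\lambda_n^{\theta - 1/2})$, which vanishes as $n\to \infty$ for any $\theta < 1/2$. The heart of the argument is therefore the $L^2$ bound on the bracketed expression. I would triangulate by inserting $e^{-iT\Delta/2} P_{\le \lambda_n^\theta}\phi$, splitting the total error into a dispersion-relation error comparing the Klein--Gordon and Schr\"odinger propagators, and an NLS-scattering error comparing $w_n(-T)$ with its linear approximant $e^{-iT\Delta/2} P_{\le \lambda_n^\theta}\phi$.

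For the dispersion error, the composite Fourier multiplier $e^{-iT\lambda_n^2(\langle\lambda_n^{-1}\nabla\rangle - 1)}\,e^{-iT\Delta/2} - 1$ has phase equal to $-T\bigl[\lambda_n^2(\langle \lambda_n^{-1}\xi\rangle - 1) - |\xi|^2/2\bigr]$, which by \eqref{eq1.4v138} is $O(T\lambda_n^{-2}|\xi|^4)$. On the support $\{|\xi|\le \lambda_n^\theta\}$ this yields $|e^{i(\mathrm{phase})}-1| \lesssim T\lambda_n^{-2+4\theta}$, so the dispersion error contributes $O(T\lambda_n^{-2+4\theta})\|\phi\|_{L^2}$, which tends to zero as $n\to\infty$ for each fixed $T$ provided $\theta < 1/2$. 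For the scattering error, I would pass through the fixed limiting profile $w_\infty$ via
\[
\|w_n(-T) - e^{-iT\Delta/2} P_{\le \lambda_n^\theta}\phi\|_{L^2} \le \|w_n - w_\infty\|_{L_t^\infty L_x^2} + \|w_\infty(-T) - e^{-iT\Delta/2}\phi\|_{L^2} + \|\phi - P_{\le \lambda_n^\theta}\phi\|_{L^2}.
\]
The first and third summands vanish as $n\to\infty$ for each fixed $T$ by \eqref{eq6.6} and dominated convergence, and the middle summand is independent of $n$ and vanishes as $T\to\infty$ by the $L^2$ scattering of $w_\infty$ guaranteed by Theorem \ref{co6.1}. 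Taking $\limsup_n$ first and then letting $T\to\infty$ closes the estimate.

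The main subtlety, and what I expect to be the only real obstacle, is the ordering of limits: the dispersion approximation requires $n\to\infty$ with $T$ fixed (to activate the non-relativistic expansion \eqref{eq1.4v138}), while the NLS scattering only happens in the limit $T\to\infty$. Uniformity in $n$ at the nonlinear level is not automatic and is precisely what the convergence $w_n \to w_\infty$ in \eqref{eq6.6} supplies, by reducing the $n$-dependent scattering statement to a single scattering statement for the fixed profile $w_\infty$. The case $t_n/\lambda_n^2 \to -\infty$ is handled identically, with $w_n$ and $w_\infty$ taken to scatter forward in time.
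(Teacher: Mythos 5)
Your proof is correct and is essentially the argument the paper intends: the paper does not write out a proof of Lemma~\ref{le6.5}, only remarking that it follows from ``the definition of $\phi_n$ and also \eqref{eq6.6}'', and the ingredients you deploy --- the exact match in the $t_n=0$ case, the intertwining of $D_{\lambda_n}$ with Fourier multipliers, reduction to an $L^2$ estimate via scaling plus Bernstein on the support $\{|\xi|\lesssim\lambda_n^{2\theta}\}$, the non-relativistic expansion \eqref{eq1.4v138} for the dispersion error at fixed $T$, and the chain $w_n\to w_\infty$ (Lemma~\ref{le6.4}, \eqref{eq6.6}) plus $L^2$-scattering of $w_\infty$ (Theorem~\ref{co6.1}) for the $T\to\infty$ limit --- are exactly the ones the paper points to. Your remark about the ordering of limits ($n\to\infty$ first, then $T\to\infty$) identifies the right pressure point; one cosmetic note is that after inserting the comparator one still carries the cutoff $P_{\le\lambda_n^{2\theta}}$ in front of $w_n(-T)$, but since $e^{-iT\Delta/2}P_{\le\lambda_n^\theta}\phi$ already lies in the range of $P_{\le\lambda_n^{2\theta}}$ for large $n$, the cutoff can be pulled onto the whole difference and discarded by $L^2$-boundedness, so no new term arises.
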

Arguing as in \cite{KSV1}, we have $\tilde{v}_n$ are approximate solutions to \eqref{eq2.2} on the large time intervals, by using the solution of the free Schr\"odinger equation to approximate the nonlinear solutions $w_n$ and also the free first order Klein-Gordon propagator is asymptotic small in the Strichartz space. We refer to \cite{KSV1} for similar argument.
\begin{proposition}[Asymptotic small on the large time intervals] \label{pr6.6}
\begin{align*}
& \limsup\limits_{n \to \infty}
 \left( \left\|e^{-i \left(t- \lambda_n^2 T \right) \langle \nabla \rangle } \tilde{v}_n \left( \lambda_n^2 T \right) \right\|_{L_{t,x}^\frac{2(d+2)}d
 \left( \left(\lambda_n^2 T, \infty \right) \times \mathbb{R}^d \right)}
 +  \left\|e^{-i \left(t +  \lambda_n^2 T \right) \langle \nabla \rangle } \tilde{v}_n  \left(-  \lambda_n^2 T \right) \right\|_{L_{t,x}^\frac{2(d+2)}d \left( \left(- \infty, - \lambda_n^2 T \right) \times \mathbb{R}^d \right)} \right)\\
 &    \to 0, \text{ as } T\to \infty.
\end{align*}
\end{proposition}
We now turn to the middle time interval. On the middle time interval, we see $\tilde{v}_n$ satisfies
\begin{align*}
\left(- i \partial_t + \langle \nabla \rangle \right) \tilde{v}_n + \mu \langle \nabla \rangle^{-1}  f\left(\Re \tilde{v}_n \right)
  = e_{1,n} + (e_{2,1,n} +e_{2,2,n}+e_{2,3,n})+e_{3,n},
\end{align*}
where
\begin{align*}
e_{1,n} &:  = e^{-it}
{ \lambda_n^{- \frac{d}2} }\left( P_{\le \lambda_n^{2\theta} } \left( \langle \lambda_n^{-1} \nabla \rangle - 1 + \frac1 {2 \lambda_n^2} \Delta \right)w_n\right)\left( \frac{t}{ \lambda_n^2}, \frac{x}{ \lambda_n}\right),\\
e_{2,1,n}&:   =  \mu \left( \langle \nabla \rangle^{-1} - 1\right) \left( e^{-it}  {C_d }
 P_{\le \lambda_n^{2\theta-1} }
 \left( f\left(w_n
 \left( \frac{t}{\lambda_n^2}, \frac{x}{\lambda_n} \right) \right)\right) \lambda_n^{ -\frac{d}2 - 2} \right),\\
 e_{2,2,n}&:  =  - \mu {C_d  } \lambda_n^{ -\frac{d}2 - 2} e^{-it} \langle \nabla \rangle^{-1}
 (P_{\le \lambda_n^{2\theta-1} }-1) \left(f\left(w_n
 \left( \frac{t}{\lambda_n^2}, \frac{x}{\lambda_n} \right) \right)\right), \\
 e_{2,3,n}&: = - \mu {C_d } \lambda_n^{ -\frac{d}2 - 2} e^{-it} \langle \nabla \rangle^{-1}
 \left( f \left(w_n \left( \frac{t}{\lambda_n^2}, \frac{x}{\lambda_n} \right)\right) - f\left( \left( P_{\le \lambda_n^{2\theta}} w_n \right)
 \left( \frac{t}{\lambda_n^2}, \frac{x}{\lambda_n} \right) \right) \right), \\
e_{3,n}&:  =  \mu \lambda_n^{ -\frac{d}2 - 2} \langle \nabla \rangle^{-1}
\bigg(  f\left(\Re \left( e^{-it} \left( P_{\le \lambda_n^{2\theta}} w_n \right)\left( \frac{t}{\lambda_n^2} , \frac{x}{\lambda_n} \right) \right)  \right)
 - e^{-it}  {C_d }f\left( \left( P_{\le \lambda_n^{2\theta}} w_n \right)\left( \frac{t}{\lambda_n^2}, \frac{x}{\lambda_n} \right) \right)\bigg).
\end{align*}
\begin{remark}
The above decomposition is slightly different from that is used in the previous result \cite{KSV1}.
The point is that we have the factor $\langle \nabla \rangle^{-1}$ in the term $e_{3,n}$, which is crucial when we consider high dimensions.
\end{remark}
By Plancherel's identity, \eqref{eq1.4v138},
 H\"older's inequality and \eqref{eq6.4}, we have
\begin{align}\label{eq6.14}
\left\|e_{1,n} \right\|_{L_t^1 H_x^\frac12 \left( \left[ - \lambda_n^2 T, \lambda_n^2 T \right] \times \mathbb{R}^d \right)}
& = \lambda_n^2 \left\| \langle \lambda_n^{-1} \xi \rangle^\frac12 \left( \langle \lambda_n^{-1} \xi \rangle - 1 - \frac{ |\xi|^2}{2 \lambda_n^2} \right) \widehat{P_{\le \lambda_n^{2\theta}}w_n}(t,\xi) \right\|_{L_t^1 L_\xi^2( [-T,T]\times \mathbb{R}^d)}   \\
& \lesssim \lambda_n^2  \left\| \langle \lambda_n^{-1} \xi \rangle^\frac12 \frac{ |\xi|^4}{ \lambda_n^4} \widehat{P_{\le \lambda_n^{2\theta}}w_n}(t,\xi) \right\|_{L_t^1 L_\xi^2([-T,T] \times \mathbb{R}^d)} \notag\\
& \lesssim T \lambda_n^{-2+8\theta}  \left\|  w_n \right\|_{L_t^\infty L_x^2} \to 0, \text{ as } n \to \infty.\notag
\end{align}
By the Mikhlin multiplier theorem, we obtain
\begin{align}\label{eq6.15}
\quad \left\|\langle \nabla \rangle e_{2,1,n} \right\|_{L_{t,x}^\frac{2(d+2)}{d+4}  \left( \left[ - \lambda_n^2 T, \lambda_n^2 T \right] \times \mathbb{R}^d \right)}
& \lesssim \lambda_n^{ -\frac{d}2 - 2} \left\| \nabla \left( f\left( w_n \left( \frac{t}{\lambda_n^2}, \frac{x}{\lambda_n} \right)\right)
\right) \right\|_{L_{t,x}^\frac{2(d+2)}{d+4} \left( \left[ - \lambda_n^2 T, \lambda_n^2 T \right] \times \mathbb{R}^d \right)}  \\
& \lesssim \lambda_n^{-1}  \|w_n\|_{L_{t,x}^\frac{2(d+2)}d ([-T, T] \times \mathbb{R}^d )}^\frac4d \|\nabla w_n\|_{L_{t,x}^\frac{2(d+2)}d ([-T, T] \times \mathbb{R}^d )}\notag\\
& \lesssim_{\|\phi\|_{L_x^2} } \lambda_n^{-1 + \theta} \to 0, \text{ as } n\to \infty.\notag
\end{align}
Similarly, by the Bernstein inequality, one has
\begin{align}\label{eq:e22n}
\left\|\langle \nabla \rangle e_{2,2,n} \right\|_{L_{t,x}^\frac{2(d+2)}{d+4} \left( \left[ - \lambda_n^2 T, \lambda_n^2 T \right] \times \mathbb{R}^d \right)}
& \lesssim \lambda_n^{ -\frac{d}2 - 1-2\theta} \left\| \nabla \left( f\left( w_n \left( \frac{t}{\lambda_n^2}, \frac{x}{\lambda_n} \right)\right)
\right) \right\|_{L_{t,x}^\frac{2(d+2)}{d+4} \left( \left[ - \lambda_n^2 T, \lambda_n^2 T \right] \times \mathbb{R}^d \right)} \\
&\lesssim_{\|\phi\|_{L_x^2} } \lambda_n^{-\theta} \to 0, \text{ as } n\to \infty\notag
\end{align}
and
\begin{align}\label{eq:e23n}
\left\|\langle \nabla \rangle e_{2,3,n} \right\|_{L_{t,x}^\frac{2(d+2)}{d+4} \left( \left[ - \lambda_n^2 T, \lambda_n^2 T \right] \times \mathbb{R}^d \right)}
& \lesssim  \|w_n\|_{L_{t,x}^\frac{2(d+2)}d([-T, T]\times \mathbb{R}^d )}^\frac4d \|P_{> \lambda_n^{2\theta}} w_n\|_{L_{t,x}^\frac{2(d+2)}d([-T, T]\times \mathbb{R}^d )} \\
& \lesssim \lambda_n^{-2\theta} \|w_n\|_{L_{t,x}^\frac{2(d+2)}d([-T, T] \times \mathbb{R}^d )}^\frac4d \| \nabla w_n\|_{L_{t,x}^\frac{2(d+2)}d([-T, T] \times \mathbb{R}^d )} \notag \\
&\lesssim_{ \|\phi\|_{L_x^2} } \lambda_n^{-\theta} \to 0, \text{ as } n\to \infty. \notag
\end{align}
We now turn to $e_{3,n}$, and show
\begin{align}\label{eq4.12v107}
\left\| \int_0^t e^{-i(t-s) \langle \nabla \rangle} e_{3,n}(s) \,\mathrm{d}s \right\|_{L_t^\infty H_x^\frac12 \cap L_{t,x}^\frac{2(d+2)}d \left( \left[-\lambda_n^2 T, \lambda_n^2 T \right] \times \mathbb{R}^d \right) } \lesssim_T \lambda_n^{-1+8\theta} \to 0, \text{  as $ n \to \infty $. }
\end{align}
For simplicity, we denote $P_{\le \lambda_n^{2\theta}} w_n$ by $w_n$ in what follows.
This will not cause any difference because we do not use the equation for $w_n$ to show \eqref{eq4.12v107}.
We would point out that we do not need the upper bounds on the regularity parameter $s$ in the bounds \eqref{eq6.4} and \eqref{eq5.4v185} any more as long as $\theta$ is replaced by $2\theta$. We have the Fourier series expansion
\begin{equation}\label{E:5.10altpf1}
|\Re u|^\frac4d \Re u = \sum_{k \in \mathbb{Z}} g_{2k-1} |u|^{\frac4d +2 - 2k} u^{2k-1},
\end{equation}
where $g_1 = C_d$ and
\[
	g_{2k-1} := \frac1{2\pi} \int_{-\pi}^\pi |\cos \theta|^{\frac4d} \cos \theta \cos ((2k-1) \theta) d\theta.
\]
By \cite[Proposition A.1]{MMU}, we have
\[
	g_{2k-1} = \frac{ (-1)^{k-1} \Gamma  \left(\frac32 + \frac2d \right) \Gamma \left(k-1-\frac2d \right) }{ \sqrt{\pi} \Gamma \left(-\frac2d \right) \Gamma  \left( k + 1+ \frac2d \right) }
= O \left(|k|^{-\frac4d - 2} \right), \text{ as $|k|\to \infty$. }
\]
The expansion \eqref{E:5.10altpf1} yields another formula for the error term
\[ e_{3,n} = \sum\limits_{k\in \mathbb{Z},\, k\ne 1} e_{3,k,n},
\]
where
\begin{align*}
e_{3,k,n} = \mu g_{2k-1} \lambda_n^{-\frac{d}2-2} e^{-i (2k-1)t} \langle \nabla \rangle^{-1} \left( \left|w_n\left({\lambda_n^{-2}}{t}, {\lambda_n^{-1}} {x} \right)\right|^{\frac4d + 2-2k} w_n\left( {\lambda_n^{-2}}{t},{\lambda_n^{-1}} {x} \right)^{2k-1}
\right).
\end{align*}
Let us introduce $f_{k,n}$ defined by
\begin{align*}
f_{k,n}(t) = -i \int_0^t e^{-i(t-s) \langle \nabla \rangle} e_{3,k,n}(s) \,\mathrm{d}s.
\end{align*}
Remark that what we want to estimate is the ${L_t^\infty H_x^\frac12 \cap L_{t,x}^\frac{2(d+2)}d \left( \left[-\lambda_n^2 T, \lambda_n^2 T \right] \times \mathbb{R}^d \right) }$ norm of $f_{n} := \sum\limits_{k\ne 1} f_{k,n}$.
A computation shows that
\[
	\left(-i \partial_t + \langle \nabla \rangle \right) f_{k,n} = - e_{3,k,n}
\]
and
\begin{align*}
& (-i \partial_t + \langle \nabla \rangle) e_{3,k,n} \\
& = -2(k-1)e_{3,k,n} -i \mu g_{2k-1} \lambda_n^{-\frac{d}2 - 4} e^{-i(2k-1)t} \left( \langle \lambda_n^{-1} \nabla \rangle^{-1} \partial_t \left( |w_n|^{\frac4d + 2-2k} w_n^{2k-1} \right)\right)\left(\frac{t}{\lambda_n^2}, \frac{x}{\lambda_n}\right )\\
& \quad + \mu g_{2k-1} \lambda_n^{- \frac{d}2 - 2} e^{-i(2k-1)t} \left( \langle \lambda_n^{-1} \nabla \rangle^{-1} \left( \langle \lambda_n^{-1} \nabla \rangle - 1\right)\left(|w_n|^{\frac4d + 2 - 2k } w_n^{2k-1} \right)\right)\left(\frac{t}{\lambda_n^2}, \frac{x}{\lambda_n}\right).
\end{align*}
Combining these two identities, one obtains
\begin{align*}
& (-i \partial_t + \langle \nabla \rangle) \left(f_{k,n}- \frac1{2(k-1)} e_{3,k,n}\right) \\
& = \frac{i \mu g_{2k-1}}{2(k-1)} \lambda_n^{-\frac{d}2 - 4} e^{-i(2k-1)t} \left( \langle \lambda_n^{-1} \nabla \rangle^{-1} \partial_t \left( |w_n|^{\frac4d + 2-2k} w_n^{2k-1} \right)\right)\left(\frac{t}{\lambda_n^2}, \frac{x}{\lambda_n}\right )\\
& \quad - \frac{\mu g_{2k-1}}{2(k-1)} \lambda_n^{- \frac{d}2 - 2} e^{-i(2k-1)t} \left( \langle \lambda_n^{-1} \nabla \rangle^{-1} \left( \langle \lambda_n^{-1} \nabla \rangle - 1\right)\left(|w_n|^{\frac4d + 2 - 2k } w_n^{2k-1} \right)\right)\left(\frac{t}{\lambda_n^2}, \frac{x}{\lambda_n}\right).
\end{align*}
By the Strichartz estimate, one has the desired estimate
\begin{align*}
\|f_{n} \|_{L_t^\infty H^\frac12_x \cap L_{t,x}^\frac{2(d+2)}d \left( \left[- \lambda_n^2 T, \lambda_n^2 T \right] \times \mathbb{R}^d \right) } \lesssim_T \lambda_n^{-1+6\theta},
\end{align*}
which is exactly \eqref{eq4.12v107}, from the following four estimates:
\begin{align*}
 \|e_{3,k,n} \|_{L^\infty_t   H^\frac12_x \left( \left[-\lambda_n^2 T, \lambda_n^2 T \right] \times \mathbb{R}^d \right)}
 &{}\lesssim |g_{2k-1}| \lambda_n^{-\frac{d}2-2} \left\| |w_n|^{1+\frac4d}(\cdot, \lambda_n^{-1} \cdot) \right\|_{L_t^\infty L_x^2(\mathbb{R}\times \mathbb{R}^d)} \\
 &{}= |g_{2k-1}| \lambda_n^{-2} \left\| w_n \right\|_{L_t^\infty L_x^{2(1+\frac4d)}([-T,T]\times \mathbb{R}^d)}^{1+\frac4d} \\
 &{}\lesssim
\langle k \rangle^{- \frac4d - 2} \lambda_n^{-2} \|w_n\|_{L_t^\infty H_x^{\frac{2d}{d+4}}([-T,T]\times \mathbb{R}^d)}^{1 + \frac4d} \\
&{}\lesssim \langle k \rangle^{- \frac4d - 2} \lambda_n^{-2+4\theta},
\end{align*}
\begin{align*}
 \|e_{3,k,n} \|_{L_{t,x}^\frac{2(d+2)}d ( [-\lambda_n^2 T, \lambda_n^2 T ] \times \mathbb{R}^d )}
 & \lesssim |g_{2k-1} | \lambda_n^{-\frac{d}2-2} \left\| |w_n|^{1+\frac4d} (\lambda_n^{-2} \cdot, \lambda_n^{-1} \cdot) \right\|_{L_{t,x}^\frac{2(d+2)}d ([-\lambda_n^2 T, \lambda_n^2 T ] \times \mathbb{R}^d) } \\
 & = |g_{2k-1} | \lambda_n^{-2} \left\| w_n\right\|_{L_{t,x}^{\frac{2(d+2)(d+4)}{d^2}} ([-T,T]\times \mathbb{R}^d)}^{1+\frac4d} \\
 & \lesssim \langle k \rangle^{-\frac4d -2 } \lambda_n^{-2} T^{\frac{d}{2(d+2)}} \|w_n\|_{L^\infty_t H_x^{\frac{d(3d+4)}{(d+2)(d+4)}}([-T,T]\times \mathbb{R}^d)}^{1+\frac4d }\\
 &{}\lesssim_T \langle k \rangle^{- \frac4d - 2} \lambda_n^{-2+6\theta},
\end{align*}
\begin{align*}
& \left\|\frac{i \mu g_{2k-1} }{2(k-1)} \lambda_n^{-\frac{d}2 - 4} e^{-i (2k-1)t} \langle \nabla \rangle^{-1} \left(  \partial_t \left(|w_n|^{\frac4d + 2 - 2k} w_n^{2k-1} \right)\right)\left( \frac{\cdot}{\lambda_n^2}, \frac{\cdot}{\lambda_n} \right) \right\|_{L^1_t L^2_x( [-\lambda_n^2 T, \lambda_n^2 T ] \times \mathbb{R}^d )}\\
&\quad \lesssim |g_{2k-1} | \lambda_n^{-2} \left\||w_n|^\frac4d |\partial_t w_n| \right\|_{L^1_t L^2_x([-T,T]\times \mathbb{R}^d)} \\
&\quad
\lesssim \langle k \rangle^{-\frac4d - 2} \lambda_n^{-2} \|w_n\|_{L^\infty_t H^{\frac{2d}{d+4}}_x([-T,T]\times \mathbb{R}^d)}^{ \frac4d ([-T,T]\times \mathbb{R}^d)} \|\partial _t w_n\|_{L^\infty_t H^{\frac{2d}{d+4}}_x([-T,T]\times \mathbb{R}^d)}\\
&\quad
\lesssim \langle k \rangle^{-\frac4d - 2} \lambda_n^{-2+6\theta},
\end{align*}
and
\begin{align*}
& \left\|\frac{\mu g_{2k-1}}{2(k-1)} \lambda_n^{-\frac{d}2 - 2} e^{-i (2k-1)t} \left( \left( \langle \lambda_n^{-1} \nabla \rangle - 1\right)\left ( |w_n|^{\frac4d + 2-2k} w_n^{2k-1}\right)\right) \left( \frac{\cdot}{\lambda_n^2}, \frac{\cdot}{\lambda_n}\right ) \right\|_{L^1_t L^2_x( [-\lambda_n^2 T, \lambda_n^2 T ] \times \mathbb{R}^d )} \\
& \quad \lesssim \frac{ |g_{2k-1} |}{ |k-1|} \left\| \lambda_n^{-1} \nabla \left( |w_n|^{\frac4d + 2-2k} w_n^{2k - 1}\right)  \right\|_{L^1_t L^2_x([-T,T]\times \mathbb{R}^d)}\\
&\quad \lesssim \langle k \rangle^{-\frac4d - 2} \lambda_n^{-1} \|w_n\|_{L^\infty_t H^{\frac{2d}{d+4}}_x([-T,T]\times \mathbb{R}^d)}^{ \frac4d} \|\nabla w_n\|_{L^\infty_t H^{\frac{2d}{d+4}}_x([-T,T]\times \mathbb{R}^d)} \\
&\quad \lesssim \langle k \rangle^{-\frac4d - 2} \lambda_n^{-1+6\theta}.
\end{align*}
We have used the elementary estimate $|\frac{d}{dz} (|z|^{\frac4d + 2 - 2k} z^{2k-1})| \lesssim \langle k\rangle |z|^{\frac4d}$ to obtain the third and fourth estimates.
Notice that the decay in $k$ is enough to sum up.
Therefore, \eqref{eq4.12v107} follows. After the above computation, we have
\begin{proposition}\label{pr6.10}
For any $\epsilon > 0$, there exist sufficiently large positive constants $T$ and $N$, such that for any $n \ge N$, $\tilde{v}_n$ satisfy
\begin{align*}
\left(-i \partial_t + \langle \nabla \rangle \right)  {\tilde{v}}_n = -  \mu \langle \nabla \rangle^{-1} f\left(\Re  {\tilde{v}}_n  \right)
+ \tilde{e}_{1,n} + \tilde{e}_{2,n} +  \tilde{e}_{3,n},
\end{align*}
with the error terms $\tilde{e}_{1,n}$, $ \tilde{e}_{2,n}$, $ \tilde{e}_{3,n}$ small in the sense that
\begin{align*}
\left\|\tilde{e}_{1,n} \right\|_{L_t^1 H_x^\frac12(\mathbb{R} \times \mathbb{R}^d) } + \left\|\langle \nabla \rangle
\tilde{e}_{2,n} \right\|_{L_{t,x}^\frac{2(d+2)}{d+4}(\mathbb{R} \times \mathbb{R}^d)} + \left\|\int_0^t e^{-i(t-s) \langle \nabla \rangle} \tilde{e}_{3,n}(s) \,\mathrm{d}s \right\|_{L_t^\infty H_x^\frac12 \cap L_{t,x}^\frac{2(d+2)}d(\mathbb{R}  \times \mathbb{R}^d)}  \le \epsilon.
\end{align*}
\end{proposition}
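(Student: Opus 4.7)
The plan is to decompose the time axis into the middle interval $|t| \le T \lambda_n^2$, where the five bounds \eqref{eq6.14}, \eqref{eq6.15}, \eqref{eq:e22n}, \eqref{eq:e23n}, and \eqref{le6.9} have already been established, and the two outer intervals $\pm t > T \lambda_n^2$, where by construction $(-i\partial_t + \langle \nabla \rangle)\tilde{v}_n = 0$ and the entire nonlinearity $\mu \langle \nabla \rangle^{-1} f(\Re \tilde{v}_n)$ plays the role of error. Concretely I would take
\[
\tilde{e}_{1,n} = \chi_{\{|t|\le T\lambda_n^2\}}\, e_{1,n}, \qquad \tilde{e}_{3,n} = \chi_{\{|t|\le T\lambda_n^2\}}\, e_{3,n},
\]
and $\tilde{e}_{2,n}$ equal to $e_{2,1,n} + e_{2,2,n} + e_{2,3,n}$ on the middle interval and $\mu \langle \nabla \rangle^{-1} f(\Re \tilde{v}_n)$ on the outer intervals; with this assignment the equation in the statement holds on all of $\mathbb{R} \times \mathbb{R}^d$.

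For the middle interval, the five bounds already proved give, for fixed $T$,
\[
\|\tilde{e}_{1,n}\|_{L_t^1 H_x^{\frac12}} + \|\langle \nabla \rangle (\tilde{e}_{2,n}\chi_{\text{mid}})\|_{L_{t,x}^{\frac{2(d+2)}{d+4}}} + \Big\|\int_0^t e^{-i(t-s)\langle \nabla \rangle}\tilde{e}_{3,n}(s)\, \mathrm{d}s\Big\|_{L_t^\infty H_x^{\frac12} \cap L_{t,x}^{\frac{2(d+2)}{d}}} \lesssim_T \lambda_n^{-\delta}
\]
for some $\delta > 0$, provided $\theta$ has been fixed small enough; in particular this quantity is $o(1)$ as $n \to \infty$ for each fixed $T$.

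For the outer contribution to $\tilde{e}_{2,n}$, since $\langle \nabla \rangle$ cancels $\langle \nabla \rangle^{-1}$, H\"older yields
\[
\|\langle \nabla \rangle (\tilde{e}_{2,n}\chi_{\text{out}})\|_{L_{t,x}^{\frac{2(d+2)}{d+4}}} = \|f(\Re \tilde{v}_n)\|_{L_{t,x}^{\frac{2(d+2)}{d+4}}(\{|t|>T\lambda_n^2\})} \lesssim \|\Re \tilde{v}_n\|_{L_{t,x}^{\frac{2(d+2)}{d}}(\{|t|>T\lambda_n^2\})}^{1 + \frac{4}{d}},
\]
and Proposition \ref{pr6.6} drives the right-hand side to zero as $T \to \infty$ after taking $\limsup$ in $n$. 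The Duhamel integrals of $\tilde{e}_{3,n}$ are by construction supported in the middle interval, so on the outer intervals they reduce to free propagations of a single vector whose $H_x^{\frac12}$ norm is controlled by the middle-interval bound and whose $L_{t,x}^{\frac{2(d+2)}{d}}$ contribution is recovered by Strichartz.

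Given $\epsilon > 0$, I would first pick $T = T(\epsilon)$ so that the outer contribution to $\langle \nabla \rangle \tilde{e}_{2,n}$ is less than $\epsilon/2$, and then choose $N = N(\epsilon, T)$ so that the middle-interval bounds fall below $\epsilon/2$ for all $n \ge N$. There is no genuine obstacle at this stage: the two delicate inputs, namely the integration-by-parts control of $\int_0^t e^{-i(t-s)\langle\nabla\rangle} e_{3,n}(s)\, \mathrm{d}s$ in \eqref{le6.9} and the smallness of the free first-order Klein-Gordon propagator supplied by Proposition \ref{pr6.6}, are already in hand, so the only remaining task is to combine the middle and outer pieces and to order the limits in $T$ and $n$ correctly.
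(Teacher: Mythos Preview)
Your proposal is correct and follows essentially the same route as the paper. The only difference is bookkeeping: on the outer intervals $|t|>T\lambda_n^2$ the paper places the entire nonlinearity $\mu\langle\nabla\rangle^{-1}f(\Re\tilde{v}_n)$ into $\tilde{e}_{3,n}$ (with $\tilde{e}_{1,n}=\tilde{e}_{2,n}=0$ there) and controls its Duhamel integral by the inhomogeneous Strichartz estimate combined with Proposition~\ref{pr6.6}, whereas you place it into $\tilde{e}_{2,n}$ and bound $\|\langle\nabla\rangle\tilde{e}_{2,n}\|_{L_{t,x}^{\frac{2(d+2)}{d+4}}}=\|\Re\tilde{v}_n\|_{L_{t,x}^{\frac{2(d+2)}{d}}}^{1+\frac4d}$ directly. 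Your assignment is arguably cleaner, since it keeps $\tilde{e}_{3,n}$ supported in the middle interval and makes the outer Duhamel contribution a pure free propagation controlled by \eqref{le6.9} and Strichartz, exactly as you observe; but the two arguments are equivalent in substance and difficulty.
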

\begin{proof}
On the interval $ \left[ - \lambda_n^2 T, \lambda_n^2 T \right]$, we can take
\begin{align*}
\tilde{e}_{1,n} = e_{1,n}, \quad \tilde{e}_{2,n} = e_{2,1,n}+ e_{2,2,n} + e_{2,3,n},
\quad
\tilde{e}_{3,n} = e_{3,n}.
\end{align*}
By \eqref{eq6.14}, \eqref{eq6.15}, \eqref{eq:e22n} and \eqref{eq:e23n}, we have
\begin{align*}
\left\|\tilde{e}_{1,n} \right\|_{L_t^1 H_x^\frac12 \left( \left[ - \lambda_n^2 T, \lambda_n^2 T \right] \times \mathbb{R}^d \right)} + \left\|\langle \nabla \rangle \tilde{e}_{2,n} \right\|_{L_{t,x}^\frac{2(d+2)}{d+4}  \left( \left[ - \lambda_n^2 T, \lambda_n^2 T \right] \times \mathbb{R}^d \right)} \lesssim_T \lambda_n^{- 2 + 8 \theta} + \lambda_n^{-1+ \theta}+ \lambda_n^{- \theta}.
\end{align*}
Together with \eqref{eq4.12v107}, $\forall\, T > 0$, we can take $N$ large enough, such that for each $n \ge N$,
\begin{align*}
& \left\|\tilde{e}_{1,n} \right\|_{L_t^1 H_x^\frac12 \left( \left[ - \lambda_n^2 T, \lambda_n^2 T \right] \times \mathbb{R}^d \right)}+ \left\|\tilde{e}_{2,n} \right\|_{L_{t,x}^\frac{2(d+2)}{d+4} \left( \left[ - \lambda_n^2 T, \lambda_n^2 T \right]\times \mathbb{R}^d \right)} \\
 & \quad + \left\|\int_0^t e^{-i(t-s) \langle \nabla \rangle} \tilde{e}_{3,n}(s) \,\mathrm{d}s \right\|_{L_t^\infty H_x^\frac12 \cap
L_{t,x}^\frac{2(d+2)}d \left( \left[ - \lambda_n^2 T, \lambda_n^2 T \right] \times \mathbb{R}^d \right)}
\le \frac\epsilon2.
\end{align*}
We now turn to the time intervals $ \left(-\infty, - \lambda_n^2 T \right) \cup  \left(\lambda_n^2 T, \infty \right)$. In this case, we choose $\tilde{e}_{1,n} = \tilde{e}_{2,n} = 0$ and $\tilde{e}_{3,n} = \mu \langle \nabla \rangle^{-1} f\left( \Re  {\tilde{v}}_n  \right)$. By Proposition \ref{pr6.6}, \eqref{eq4.12v107} and the Strichartz estimate, for $T$ and $n$ sufficiently large, one has
\begin{align*}
\left\|\int_0^t e^{-i(t-s) \langle \nabla \rangle} \tilde{e}_{3,n}(s) \,\mathrm{d}s \right\|_{L_t^\infty H_x^\frac12 \cap
L_{t,x}^\frac{2(d+2)}d \left(|t| \ge \lambda_n^2 T \right) }
\lesssim \left\| \tilde{v}_n \right\|_{L_{t,x}^\frac{2(d+2)}d \left(|t| \ge T\lambda_n^2 \right)}^{\frac4d + 1} \le \frac\epsilon2.
\end{align*}
This completes the proof of the Proposition.
\end{proof}
By Lemma \ref{le6.5}, Proposition \ref{pr6.10}, and Proposition \ref{pr3.4}, we can obtain a solution $v_n$ to \eqref{eq2.2} with $v_n(0) = \phi_n$, for $n$ large enough. Moreover,
\begin{align}\label{eq6.19}
 \left\|v_n(t) - \tilde{v}_n(t-t_n) \right\|_{L_t^\infty H_x^\frac12 \cap L_{t,x}^\frac{2(d+2)}d}
\to 0, \text{ as } n\to \infty.
\end{align}
We now turn to the proof of \eqref{eq6.3}. By density, we can take $\psi_\epsilon \in C_c^\infty ( \mathbb{R} \times \mathbb{R}^d) $ such that
\begin{align}\label{eq5.13v84}
\left\|e^{-it} D_{\lambda_n} \left( \psi_\epsilon  \left( \lambda_n^{-2} t \right) - w_\infty( \lambda_n^{-2} t) \right) \right\|_{L_{t,x}^\frac{2(d+2)}d}
= \left\|\psi_\epsilon - w_\infty \right\|_{L_{t,x}^\frac{2(d+2)}d} < \frac\epsilon2.
\end{align}
By the definition of $\tilde{v}_n$, the triangle inequality, Proposition \ref{pr6.6}, \eqref{eq6.6}, the dominated convergence theorem, we have by taking $T$ sufficiently large and $n$ large enough,
\begin{align*}
& \left\|\tilde{v}_n(t)  - e^{-it} D_{\lambda_n} w_\infty \left( \lambda_n^{-2} t \right) \right\|_{L_{t,x}^\frac{2(d+2)}d}\\
\lesssim  & \left\|\tilde{v}_n \right\|_{L_{t,x}^\frac{2(d+2)}d \left( \left\{ |t| > T \lambda_n^2 \right\} \times \mathbb{R}^d \right)}
+ \left\|w_n - w_\infty \right\|_{L_{t,x}^\frac{2(d+2)}d} + \left\|w_\infty \right\|_{L_{t,x}^\frac{2(d+2)}d
\left( \left\{|t|>T \right\} \times \mathbb{R}^d \right)} < \frac\epsilon2.
\end{align*}
Combining this with \eqref{eq6.19} and \eqref{eq5.13v84}, we get \eqref{eq6.2} when $\nu_n = 0$.

\textit{Case II. $\nu_n \to \nu \in \mathbb{R}^d$, as $n\to \infty$.}

By the proof in \textit{Case I}, there is a global solution $v_n^0$ to \eqref{eq2.2} with
\begin{align*}
v_n^0(0) = T_{\tilde{x}_n} e^{i\tilde{t}_n \langle \nabla \rangle} D_{\lambda_n} P_{\le \lambda_n^\theta} \phi,
\end{align*}
for $n$ large enough.
Moreover, $S_{\mathbb{R}}(v_n^0) \lesssim_{ \|\phi\|_{L_x^2} } 1$ and for any $\epsilon > 0$,
there exists $\psi_\epsilon^0 \in C_c^\infty(\mathbb{R} \times \mathbb{R}^d)$ and $N_\epsilon^0$ so that
\begin{align}\label{eq6.22}
\left\|\Re \left( v_n^0\left(t + \tilde{t}_n, x + \tilde{x}_n\right) - { \lambda_n^{- \frac{d}2} }{e^{-it}} \psi_\epsilon^0\left( { \lambda_n^{-2}}{t}, { \lambda_n^{-1} }{x} \right) \right) \right\|_{L_{t,x}^\frac{2(d+2)}d} < \epsilon,
\end{align}
when $n \ge N_\epsilon^0$. Before continuing, we recall the following results, which are extension of the results in \cite{KSV1} in higher dimensions.
Arguing as in \cite{KSV1}, by the finite speed of propagation, we have
\begin{lemma}\label{le3.5}
For any $(u_0,u_1) \in H_x^1 \times L_x^2$, there exist sufficiently small constant $\epsilon > 0$ and a local solution $u$ defined in $\Omega = \left\{(t,x)\in \mathbb{R} \times \mathbb{R}^d: |t| - \epsilon |x| < \epsilon\right\}$ to \eqref{eq1.1} with $(u(0), \partial_t u(0)) = (u_0,u_1)$. In addition, the solution $u$ satisfies
\begin{align}\label{eq3.4}
 \sup\limits_{|t| < \epsilon R} \int_{|x| > R} \left( | \partial_t u(t,x)|^2 + |\nabla u(t,x)|^2 + |u(t,x)|^2\right) \,\mathrm{d}x \to 0, \text{ as }  {R\to \infty}.
\end{align}
\end{lemma}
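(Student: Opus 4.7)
The argument combines the local well-posedness from Proposition \ref{pr3.1} with finite speed of propagation for the second-order NLKG equation \eqref{eq1.1}, applied to a small tail of the data. First, Proposition \ref{pr3.1} supplies a local solution $u$ on some maximal interval $(-T_*, T^*) \times \mathbb{R}^d$ with $T_*, T^* > 0$ depending on $\|(u_0, u_1)\|_{H^1 \times L^2}$. In parallel, fix a smooth cutoff $\chi$ with $\chi \equiv 1$ on $|x| \ge 1$ and $\chi \equiv 0$ on $|x| \le 1/2$, and set $\chi_R(x) = \chi(x/R)$. Since $(u_0, u_1) \in H^1 \times L^2$, one has $\|(\chi_R u_0, \chi_R u_1)\|_{H^1 \times L^2} \to 0$ as $R \to \infty$. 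Choose $R_0$ large enough that this norm falls below the small-data threshold of Proposition \ref{pr3.1}(ii), producing a global solution $\tilde u$ of \eqref{eq1.1} with data $(\chi_{R_0} u_0, \chi_{R_0} u_1)$ whose $L^\infty_t(H^1 \times L^2)$ norm is likewise small.

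Next, fix $\epsilon > 0$ so small that $\epsilon(R_0 + 1)/(1 - \epsilon) < \min(T_*, T^*)$. Finite speed of propagation for \eqref{eq1.1} (by the standard energy identity for $-\partial_t^2 + \Delta - 1$ on truncated backward light cones) implies that $u$ and $\tilde u$ coincide on $\{(t,x) : |x| > R_0 + |t|\}$ throughout the common existence interval, because the two pairs of initial data agree on $\{|x| \ge R_0\}$. For any $(t, x) \in \Omega$, if $|t| \le \epsilon(R_0 + 1)/(1 - \epsilon)$ then $|t| < \min(T_*, T^*)$ and we take $u$ from the local solution; otherwise, the defining condition $|x| > (|t| - \epsilon)/\epsilon$ of $\Omega$ forces $|x| > R_0 + |t|$, and we define $u := \tilde u$ there. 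These assignments agree on overlaps, extending $u$ consistently to all of $\Omega$.

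For the decay \eqref{eq3.4}, repeat the tail construction at ever finer scales. Given $\eta > 0$, pick $R_\eta \ge R_0$ large enough that $\|(\chi_{R_\eta} u_0, \chi_{R_\eta} u_1)\|_{H^1 \times L^2} < \eta$, and let $\tilde u^{(\eta)}$ denote the corresponding global solution, satisfying $\|(\tilde u^{(\eta)}, \partial_t \tilde u^{(\eta)})\|_{L^\infty_t(H^1 \times L^2)} \lesssim \eta$. For $R > R_\eta/(1 - \epsilon)$ and $|t| < \epsilon R$, one has $R > R_\eta + |t|$, so finite speed of propagation gives $u \equiv \tilde u^{(\eta)}$ on $\{|x| > R\}$ for such $t$, and hence the left-hand side of \eqref{eq3.4} is bounded by a constant times $\eta^2$. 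Letting first $R \to \infty$ and then $\eta \to 0$ yields \eqref{eq3.4}. The only delicate ingredient is finite speed of propagation for the nonlinear equation itself, a classical hyperbolic fact established by integrating the local energy identity on truncated backward light cones and using the uniform $H^1 \times L^2$ bound on the local solution; the remainder of the argument is bookkeeping in the choice of scales.
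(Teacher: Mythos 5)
Your proposal is correct and follows essentially the same route the paper indicates: the paper does not write out a proof but states that the lemma follows ``arguing as in \cite{KSV1}, by the finite speed of propagation,'' which is exactly your strategy of combining the local solution from Proposition \ref{pr3.1}, the small-data global theory applied to the tail $(\chi_R u_0, \chi_R u_1)$, and the finite-speed/domain-of-dependence principle for \eqref{eq1.1} to patch and to obtain the uniform decay \eqref{eq3.4}. One small point worth making explicit: in the decay step you compare the patched $u$ on $\Omega$ directly with $\tilde u^{(\eta)}$, but since $u$ is only defined on $\Omega$ (whose walls have slope $1/\epsilon > 1$, so backward light cones from points of $\Omega$ can exit $\Omega$), the clean way to justify $u \equiv \tilde u^{(\eta)}$ on $\{|x|>R_\eta+|t|\}\cap\Omega$ is the chain $u=\tilde u$ on $\{|x|>R_0+|t|\}\cap\Omega$ (from your patching) and $\tilde u=\tilde u^{(\eta)}$ on $\{|x|>R_\eta+|t|\}$ (finite speed for two globally defined solutions with data agreeing on $\{|x|\ge R_\eta\}$), using $R_\eta\ge R_0$; this is what you implicitly rely on, and stating it removes any ambiguity.
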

\begin{lemma}\label{co3.7}
Given $\left(u(0), \partial_t u(0)\right)  \in H^1\times L^2$ and $\frac{|\nu |}{\langle \nu \rangle } < \epsilon$ for some $\epsilon > 0$, we have
$u \circ L_\nu$ is a solution to \eqref{eq1.1} on $(- \epsilon, \epsilon) \times \mathbb{R}^d$ and $\left(u \circ L_\nu(0,x),
(u \circ L_\nu)_t(0,x) \right)\in H^1 \times L^2$ is continuous with respect to $\nu$.
\end{lemma}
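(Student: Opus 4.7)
The proof combines two ingredients already in the text. First, Lemma~\ref{le3.5} provides, for data $(u_0, u_1) \in H^1 \times L^2$, a local solution $u$ on the fat spacelike slab $\Omega = \{(t,x):|t| - \epsilon |x| < \epsilon\}$ for some $\epsilon > 0$ depending on the data, with the spatial decay \eqref{eq3.4} and the natural regularity $u \in C_t^0 H_x^1$, $\partial_t u \in C_t^0 L_x^2$ on the relevant time slices of $\Omega$. Second, the NLKG equation is Lorentz invariant, so if $u$ solves~\eqref{eq1.1} on $D$, then $u \circ L_\nu$ solves~\eqref{eq1.1} on $L_\nu^{-1}(D)$. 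The plan is therefore to (i) verify the geometric inclusion $L_\nu\bigl((-\epsilon,\epsilon) \times \R^d\bigr) \subseteq \Omega$ whenever $|\nu|/\langle\nu\rangle < \epsilon$, and (ii) verify that the resulting initial data at $t = 0$ lies in $H^1 \times L^2$ and depends continuously on $\nu$.

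To carry out (i), set $(\tilde t,\tilde x) = L_\nu(t,x)$ with $\tilde t = \langle\nu\rangle t - \nu \cdot x$ and $\tilde x = x^\perp + \langle\nu\rangle x^\parallel - \nu t$, and expand the inequality $|\tilde t|^2 < \epsilon^2 (1 + |\tilde x|)^2$ as a quadratic expression in $(t, |x^\parallel|, |x^\perp|)$. A direct computation shows that the coefficient of $|x^\perp|^2$ is $-\epsilon^2 < 0$ and the coefficient of $|x^\parallel|^2$ equals $|\nu|^2(1-\epsilon^2) - \epsilon^2$. The assumption $|\nu|/\langle\nu\rangle < \epsilon$ is algebraically equivalent to $|\nu|^2(1 - \epsilon^2) < \epsilon^2$, so the latter coefficient is also negative. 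The remaining contributions involve $|t| < \epsilon$ and linear-in-$x$ cross terms, which can be absorbed by a harmless further shrinkage of $\epsilon$. Once the inclusion is established, Lorentz invariance directly yields that $u \circ L_\nu$ solves~\eqref{eq1.1} on $(-\epsilon,\epsilon)\times\R^d$.

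For (ii), the initial datum $(u \circ L_\nu)(0, x) = u\bigl(-\nu \cdot x, \, x^\perp + \langle\nu\rangle x^\parallel\bigr)$ is the trace of $u$ along the spacelike hyperplane $\{t = (\nu \cdot y)/\langle\nu\rangle\}$, after the linear substitution $y = x^\perp + \langle\nu\rangle x^\parallel$ with Jacobian $\langle\nu\rangle$. The chain rule then expresses $\nabla_x\bigl[(u \circ L_\nu)(0,\cdot)\bigr]$ and $(u \circ L_\nu)_t(0,\cdot)$ as $\nu$-dependent linear combinations of $\nabla u$ and $\partial_t u$ evaluated on this hyperplane, and the $L^2$-membership follows from $u \in C_t^0 H_x^1$ and $\partial_t u \in C_t^0 L_x^2$, while the decay~\eqref{eq3.4} suppresses any contribution at spatial infinity. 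Continuity in $\nu$ follows from the continuous dependence of the Lorentz-boost formulas on $\nu$, together with the time-continuity of $u$ and $\partial_t u$ in their respective function spaces.

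The main obstacle is the geometric verification in step (i): the Lorentz boost shears the time axis so that $|\tilde t|$ can grow linearly in $|x|$, and it is precisely the hypothesis $|\nu|/\langle\nu\rangle < \epsilon$ (rather than the naive $|\nu| < \epsilon$) that makes the critical quadratic coefficient strictly negative and forces the image of the thin slab $(-\epsilon,\epsilon) \times \R^d$ back into the tilted slab $\Omega$ at every spatial scale. The chain-rule bookkeeping in (ii) is essentially routine once (i) is in hand and the regularity of $u$ on $\Omega$ is available.
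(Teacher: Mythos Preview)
Your proposal is essentially correct and follows the natural approach (the paper itself gives no proof, referring to \cite{KSV1}). Two remarks on the execution. First, the ``harmless further shrinkage of $\epsilon$'' in step~(i) is unnecessary: writing $\beta = |\nu|/\langle\nu\rangle$ and using the identity $\tilde t + \beta(\hat\nu\cdot\tilde x) = t/\langle\nu\rangle$, one gets directly
\[
|\tilde t| \le \frac{|t|}{\langle\nu\rangle} + \beta\,|\hat\nu\cdot\tilde x| < \frac{\epsilon}{\langle\nu\rangle} + \beta\,|\tilde x| \le \epsilon + \epsilon|\tilde x|,
\]
so $L_\nu\bigl((-\epsilon,\epsilon)\times\R^d\bigr)\subset\Omega$ holds exactly under $\beta<\epsilon$. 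Your quadratic-coefficient computation reaches the same conclusion but by a longer route, and the suggestion that cross terms force a shrinkage is a small inaccuracy. Second, in step~(ii) the phrase ``the $L^2$-membership follows from $u\in C_t^0 H_x^1$'' glosses over the fact that $C_t^0 H_x^1$ regularity on horizontal slices does not by itself give an $H^1$ trace on a tilted hyperplane; the decay~\eqref{eq3.4} (or equivalently a stress-energy-tensor flux argument across the region between $\{t=0\}$ and $\{\tilde t=0\}$, as the paper does later in Proposition~\ref{pr6.11}) is what actually closes this step and should be invoked more explicitly.
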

We can now return to the proof when $\nu_n \to \nu \in \mathbb{R}^d$, as $n\to \infty$, we have the following extension of Proposition 6.11 in \cite{KSV1} in higher dimensional case. Although the proof is a slight modification of Proposition 6.11 of \cite{KSV1}, we present the proof for self-contained.
\begin{proposition}[Matching initial data]\label{pr6.11}
For $n$ large enough, the global solution
\begin{align*}
v_n^1:= \left( 1 + i \langle \nabla \rangle^{-1} \partial_t \right) \Re \left( v_n^0 \circ L_{\nu_n}\right)
\end{align*}
of \eqref{eq2.2} satisfies $\sup\limits_n S_{\mathbb{R}}(v_n^1) \lesssim_{ \|\phi\|_{L_x^2} } 1$ and
\begin{align}\label{eq5.21v140}
\left\|v_n^1(0) - \phi_n \right\|_{H_x^1} \to 0, \text{ as } n\to \infty.
\end{align}
\end{proposition}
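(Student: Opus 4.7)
My plan is to verify that $v_n^1$ is a global NLKG solution with bounded Strichartz norm using Lorentz invariance, and then to pin down $v_n^1(0) - \phi_n$ by splitting $v_n^0$ into its linear evolution plus a Duhamel correction.

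For the first claim, since $\Re v_n^0$ solves the second-order NLKG and this equation is Lorentz invariant, $U_n := \Re v_n^0 \circ L_{\nu_n} = \Re(v_n^0 \circ L_{\nu_n})$ is again a real-valued second-order NLKG solution. Therefore $v_n^1 = (1 + i\langle\nabla\rangle^{-1}\partial_t) U_n$ is a global solution of \eqref{eq2.2}, and the identity $S_{\mathbb{R}}(v_n^1) = S_{\mathbb{R}}(v_n^0) \lesssim_{\|\phi\|_{L^2}} 1$ follows from the change of variables $(t,x)\mapsto L_{\nu_n}(t,x)$ having Jacobian one on spacetime.

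For the second claim, set $\psi_n := v_n^0(0) = e^{i\tilde t_n\langle\nabla\rangle}D_{\lambda_n}P_{\le\lambda_n^\theta}\phi$ and consider the linear evolution $V_n(t) := e^{-it\langle\nabla\rangle}\psi_n$. The intertwining identity \eqref{eq2.13v139} gives $V_n(L_{\nu_n}(t,x)) = (e^{-it\langle\nabla\rangle}L_{\nu_n}\psi_n)(x)$, while \eqref{eq2.15} together with the normalizations $\tilde x_n = 0$, $\tilde t_n = t_n/\langle\nu_n\rangle$ identifies $L_{\nu_n}\psi_n = T_{x_n}e^{it_n\langle\nabla\rangle}L_{\nu_n}D_{\lambda_n}P_{\le\lambda_n^\theta}\phi = \phi_n$. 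Taking real parts and applying $(1+i\langle\nabla\rangle^{-1}\partial_t)$ at $t=0$ would return $\phi_n$ exactly if $v_n^0$ were $V_n$. Writing $v_n^0 = V_n + N_n$ with the Duhamel remainder $N_n(t) = -i\mu\int_0^t e^{-i(t-s)\langle\nabla\rangle}\langle\nabla\rangle^{-1}f(\Re v_n^0)(s)\,ds$, linearity yields
\[
v_n^1(0) - \phi_n = \bigl[(1+i\langle\nabla\rangle^{-1}\partial_t)(\Re N_n \circ L_{\nu_n})\bigr]_{t=0}.
\]

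Bounding the right-hand side in $H^1$ is the essential step. The guiding observation is a scale matching: since $\phi_n$ is concentrated at spatial scale $|x| \sim \lambda_n$, the corresponding boosted time $\tau = -\nu_n\cdot x$ is of order $\lambda_n$, which in the Schr\"odinger frame $s = \tau/\lambda_n^2$ is of order $\lambda_n^{-1}$ and tends to zero. In this small-$s$ regime, since $N_n(0)=0$ and the Case~I analysis identifies $v_n^0 - V_n$ with (up to vanishing errors) the rescaled NLS Duhamel integral, one has $O(|\tau|/\lambda_n^2) = O(\lambda_n^{-1})$ smallness of $N_n$ in the relevant norms. Combined with Lemma~\ref{le2.4}, which makes $L_{\nu_n}$ an isometry on $H^{1/2}$ and continuous on $H^1$ with bounded norm $\langle\nu_n\rangle \to \langle\nu\rangle$, plus a trace/co-area inequality on the slanted hyperplane $\{t = -\nu_n\cdot x\}$, this yields the required $H^1$-smallness for the contribution $|x|\lesssim\lambda_n$; the complementary region is handled by the spatial decay of $\phi_n$ and the linear scattering of $v_n^0$.

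The main obstacle lies exactly in this last step, because the Lorentz boost mixes spatial and temporal derivatives and is only an isometry in $H^{1/2}$. Controlling the $H^1$ trace on a tilted hyperplane requires both the uniform bound $\langle \nu_n\rangle \lesssim 1$ and the $O(\lambda_n^{-1})$-smallness of $N_n$ on the relevant time window $|\tau|\lesssim\lambda_n$. I expect this step to reduce to estimates close in spirit to Proposition~\ref{pr6.10}, performed in spacetime Fourier variables where Lorentz acts by a smooth bi-Lipschitz substitution with bounded Jacobian.
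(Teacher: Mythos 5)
Your reduction of \eqref{eq5.21v140} to controlling the Duhamel remainder on the slanted hyperplane is exactly what the paper does (with $V_n$ playing the role of its $u_n^{0,l}$ and $N_n$ of its $\tilde u_n^0$), and the scale-matching heuristic $\tau\sim\lambda_n \Leftrightarrow s\sim\lambda_n^{-1}$ is the right intuition for why the remainder is small near the boosted slice. But the crucial step — extracting an $H^1_x$ estimate for the trace of $\tilde u_n^0$ on $\{t=-\nu_n\cdot x\}$ from that heuristic — is exactly where your argument stops being a proof. You invoke ``a trace/co-area inequality on the slanted hyperplane'' together with Lemma~\ref{le2.4}, but Lemma~\ref{le2.4} concerns the operator $L_\nu$ acting on initial data (the $H^{1/2}$-unitarity there is a statement about the Lorentz action on free waves), not the restriction of an arbitrary spacetime function to a tilted hyperplane; no trace inequality is stated or proved in the paper, and it is not clear what inequality you have in mind. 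Likewise, the suggestion to carry out the estimate ``in spacetime Fourier variables where Lorentz acts by a smooth bi-Lipschitz substitution'' does not by itself give control of a fixed-time $H^1$ norm on a spacelike slice.

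The paper closes this gap by a Friedrichs/Morawetz-type energy flux argument: it introduces the stress-energy tensor $\mathcal T$ of $\tilde u_n^0$, forms the momentum flux $\mathbf p_n = \langle\nu_n\rangle\mathcal T^{0\cdot}+\nu_{n,j}\mathcal T^{j\cdot}$, and notes that the $H^1_x$ norm of $\tilde u_n^0\circ L_{\nu_n}(0,\cdot)$ is the surface integral of $\mathbf p_n\cdot d\mathbf S$ over the slanted hyperplane. Since $\tilde u_n^0$ and $\partial_t\tilde u_n^0$ vanish on $\{t=0\}$, the divergence theorem over the wedge $\Omega_n$ between the two hyperplanes (with the spatial decay of Lemma~\ref{le3.5} killing the boundary terms at infinity) reduces the task to bounding $\iint_{\Omega_n}|\nabla_{t,x}\cdot\mathbf p_n|$, and the divergence is explicitly the nonlinearity times $\nabla_{t,x}\tilde u_n^0$. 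This integral is then estimated by $\|\Re v_n^0\|_{L_{t,x}^{2(d+2)/d}(\Omega_n)}^{(d+4)/d}\,\|\nabla_{t,x}\tilde u_n^0\|_{L_{t,x}^{2(d+2)/d}}$; the second factor is $O(1)$ by the higher-regularity persistence \eqref{eq3.3}, and the first tends to zero because the compactly supported approximant from \eqref{eq6.22} has $L_{t,x}^{2(d+2)/d}(\Omega_n)$ norm $O(\lambda_n^{-d/(2(d+2))})$. That flux/divergence identity is the concrete mechanism your proposal is missing; without it, the claimed $H^1$ trace bound remains unjustified.
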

\begin{proof}
We have the decomposition
\begin{align*}
\Re v_n^0= u_n^{0, l } + \tilde{u}_n^0,
\end{align*}
where $u_n^{0, l }$ is the solution of the free Klein-Gordon equation with
\begin{align*}
\left(\left( 1 +i \langle \nabla \rangle^{-1} \partial_t \right) u_n^{0,l }\right) (0) = v_n(0) = L_{\nu_n}^{-1} \phi_n.
\end{align*}
By \eqref{eq2.13v139}, we have
\begin{align*}
\left(\left( 1 + i \langle \nabla \rangle^{-1} \partial_t \right) \left( u_n^{0, l } \circ L_{\nu_n} \right)\right) (0) = \phi_n,
\end{align*}
we can then obtain $\left\|v_n^1(0) - \phi_n \right\|_{H_x^1}  = \left\|\tilde{u}_n^0 \circ L_{\nu_n}(0, \cdot) \right\|_{H_x^1}$.

By direct calculation, we see $\tilde{u}_n^0$ obeys
\begin{align*}
\begin{cases}
\partial_t^2 \tilde{u}_n^0 - \Delta \tilde{u}_n^0 + \tilde{u}_n^0 = - \mu | \Re v_n^0|^\frac4d \Re v_n^0,\\
 \tilde{u}_n^0(0,x) = \partial_t \tilde{u}_n^0(0,x) = 0.
\end{cases}
\end{align*}
On the space-time domain $\Omega = \left\{(t,x)\in \mathbb{R} \times \mathbb{R}^d: |t| - \epsilon |x| < \epsilon\right\}$, by Lemma \ref{le3.5} and the Strichartz estimate, we have
\begin{align*}
\left\|\tilde{u}_n^0 \right\|_{L_t^q L_x^r(\Omega)} + \left\|\nabla_{t,x} \tilde{u}_n^0 \right\|_{L_t^\infty L_x^2(\Omega)} < \infty, \text{ for any sharp Schr\"odinger admissible pair } (q,r).
\end{align*}
Since $\Re v_n^0 $ satisfies \eqref{eq3.4}, and the analogous estimate for $u_n^{0,l }$ follows from finite speed of propagation and energy conservation, this yields
\begin{align}\label{eq3.10}
 \sup\limits_{|t| \le \epsilon R} \int_{|x| > R}   \left|\partial_t \tilde{u}_n^0(t,x) \right|^2 +  \left|\nabla \tilde{u}_n^0(t,x) \right|^2 + \left|\tilde{u}_n^0(t,x) \right|^2   \,\mathrm{d}x
\to 0, \text{ as } R\to \infty.
\end{align}
Let $\mathcal{T}$ be the stress energy tensor of $\tilde{u}_n^0$, its components are
\begin{align*}
& \mathcal{T}^{00} = \frac12 \left| \partial_t \tilde{u}_n^0 \right|^2 + \frac12 \left|\nabla \tilde{u}_n^0 \right|^2 + \frac12 \left| \tilde{u}_n^0 \right|^2,
 \ \mathcal{T}^{0j} = \mathcal{T}^{j0} = - \partial_t \tilde{u}_n^0 \partial_j \tilde{u}_n^0,\\
\text{ and } &
 \mathcal{T}^{jk} = \partial_j \tilde{u}_n^0  \partial_k \tilde{u}_n^0 - \delta_{jk} \left(\mathcal{T}^{00} -  \left|\partial_t \tilde{u}_n^0  \right|^2 \right),
\end{align*}
where $j, k \in \{1, \cdots, d\}$. Let the vector $\mathbf{p}_n$ with components defined by
\begin{align*}
{p}_n^\alpha = \langle \nu_n \rangle \mathcal{T}^{0\alpha} + \nu_{n,1} \mathcal{T}^{1 \alpha} + \nu_{n,2} \mathcal{T}^{2 \alpha} + \cdots + \nu_{n,d} \mathcal{T}^{d \alpha} , \ \alpha \in \{0,1,2, \cdots, d\}.
\end{align*}
By direct computation, we have
\begin{align}\label{eq2.31v77}
\nabla_{t,x} \cdot \mathbf{p}_n = - \mu  \left|\Re v_n^0 \right|^\frac4d \Re v_n^0  \left( \langle \nu_n \rangle \partial_t \tilde{u}_n^0 - \nu_n \cdot \nabla_x \tilde{u}_n^0 \right),
\end{align}
and by Gauss' formula,
\begin{align*}
\int_{L_{\nu_n} (t, \mathbb{R}^d)} \mathbf{p}_n \cdot \mathrm{d}\mathbf{S}
& =\int_{\mathbb{R}^d} \Big( \langle \nu_n \rangle p_n^0 +  \sum\limits_{j = 1}^d \nu_{n,j } p_n^j\Big) \circ L_{\nu_n} (t,x) \,\mathrm{d}x\\
& = \frac12 \int_{\mathbb{R}^d} \left|\partial_t \left( \tilde{u}_n^0 \circ L_{\nu_n}  \right)\right|^2 + \left|\nabla  \left(\tilde{u}_n^0 \circ L_{\nu_n} \right)\right|^2 + \left|\tilde{u}_n^0 \circ L_{\nu_n} \right|^2 \,\mathrm{d}x,
\end{align*}
where $\mathrm{d} \mathbf{S}$ is the surface measure times the unit normal vector.

We now consider the estimate of the nonlinearity in
\begin{align*}
\Omega_n = \left\{ (t,x): \left( \langle \nu_n \rangle t + \nu_n \cdot x \right) t < 0\right\}.
\end{align*}
%Let $\phi: \mathbb{R}_+ \to [0,1]$ be a cut-off function with
%\begin{equation*}
%\phi(r) =
%\begin{cases}
%1, \ 0\le r < 1,\\
%0, \ r > 2.
%\end{cases}
%\end{equation*}
For any $(t,x) \in \mathbb{R} \times \mathbb{R}^d$, denote
\begin{align*}
\psi_R(t,x) = \phi\left( \frac{|t|+ |x|}R\right),
\end{align*}
where $\psi$ is the cut-off function defined in \eqref{eq3.35v182},
by applying the divergence theorem together with \eqref{eq3.10}, \eqref{eq2.31v77}, and Lemma \ref{le3.5}, we have
\begin{align}\label{eq5.25v140}
\frac12 \left\|\tilde{u}_n^0 \circ L_{\nu_n}(0, \cdot) \right\|_{H_x^1}^2
\le & \lim\limits_{R\to \infty} \frac12 \int_{\mathbb{R}^d} \left(\left|\partial_t (\tilde{u}_n^0 \circ L_{\nu_n} )\right|^2 + |\nabla \left( \tilde{u}_n^0 \circ L_{\nu_n} \right)|^2 + \left|\tilde{u}_n^0 \circ L_{\nu_n}  \right|^2 \right) \psi_R \,\mathrm{d}x \notag \\
\le & \limsup\limits_{R\to \infty} \iint_{\Omega_{t,\nu_n }} \left|\psi_R \nabla_{t,x} \cdot \mathbf{p}_n \right| + \left|\mathbf{p}_n \cdot \nabla_{s,y} \psi_R\right|\,\mathrm{d}y \mathrm{d}s \notag \\
\le & \iint_{\Omega_{t,\nu_n }} \left|\nabla_{t,x} \cdot \mathbf{p}_n \right| + \limsup\limits_{R\to \infty} \frac1R \int_{-\epsilon R}^{\epsilon R} \int_{|x|\sim R} \left|\langle \nabla_{t,x} \rangle \tilde{u}_n^0 \right|^2 \,\mathrm{d}x \mathrm{d}t \notag\\
 = &  \iint_{\Omega_n}  \left|\nabla_{t,x} \cdot \mathbf{p}_n \right| \,\mathrm{d}x \mathrm{d}t
 \lesssim   \left\|\Re v_n^0 \right\|_{L_{t,x}^\frac{2(d+2)}d(\Omega_n)}^\frac{d+4}d \left\|\nabla_{t,x} \tilde{u}_n^0 \right\|_{L_{t,x}^\frac{2(d+2)}d(\mathbb{R}\times \mathbb{R}^d)},
\end{align}
where
$\Omega_{t, \nu_n} := \left\{ (s,y): \left( \langle \nu_n \rangle^{-1}  \left(t- \nu_n \cdot y \right) - s \right) s > 0\right\}$.

We now estimate the right hand side of \eqref{eq5.25v140}. We can see $\forall\, \psi \in C_c^\infty$,
\begin{align*}
\int_{\Omega_n} \left| \lambda_n^{- \frac{d}2} \psi\left( \frac{t- \tilde{t}_n}{ \lambda_n^2}, \frac{x - \tilde{x}_n} { \lambda_n} \right) \right|^\frac{2(d+2)}d \,\mathrm{d}x \mathrm{d}t
\lesssim \lambda_n^{-1}\|\psi\|_{L_{t,x}^\infty} \to 0, \text{ as } n\to \infty.
\end{align*}
This together with \eqref{eq6.22} and the triangle inequality yields for $n$ sufficiently large,
\begin{align}\label{eq5.27v140}
\left\|\Re v_n^0\right\|_{L_{t,x}^\frac{2(d+2)}d ( \Omega_n)} \to 0, \text{ as } n\to \infty.
\end{align}
By the triangle inequality, \eqref{eq3.3}, $S_{\mathbb{R}}(v_n^0) \lesssim_{ \|\phi\|_{L_x^2} } 1$, and Strichartz, we get
\begin{align}\label{eq5.26v140}
 \left\|\nabla_{t,x} \tilde{u}_n^0 \right\|_{L_{t,x}^\frac{2(d+2)}d} & \le \left\|\nabla_{t,x} \Re v_n^0 \right\|_{L_{t,x}^\frac{2(d+2)}d} + \left\|\nabla_{t,x} u_n^{0, l } \right\|_{L_{t,x}^\frac{2(d+2)}d}\\
 & \lesssim_{ \|\phi \|_{L_x^2} }
  \left\| \langle \nabla \rangle^\frac32 D_{\lambda_n} P_{\le \lambda_n^\theta} \phi\right\|_{L_x^2} + \left\|v_n^0(0) \right\|_{H_x^\frac32}
\lesssim_{ \|\phi \|_{L_x^2} } 1. \notag
\end{align}
By \eqref{eq5.25v140}, \eqref{eq5.26v140}, and \eqref{eq5.27v140}, we can finish the proof of \eqref{eq5.21v140}.
\end{proof}
Since $v_n^0$ is a solution of \eqref{eq2.2}, $ \Re \left( v_n^0\circ L_{\nu_n}\right)$ solves \eqref{eq1.1} by Lemma \ref{co3.7}. In general, $v_n^0 \circ L_{\nu_n}$ is not a solution of \eqref{eq2.2}, and also
\begin{align*}
v_n^1:= \left( 1 + i \langle \nabla \rangle^{-1} \partial_t \right) \Re \left( v_n^0 \circ L_{\nu_n}\right)
\end{align*}
solves \eqref{eq2.2} with $S_{\mathbb{R}}(v_n^1) = S_{\mathbb{R}}(v_n^0)$, which equals to $v_n^0 \circ L_{\nu_n}$ only when $\nu_n = 0$. Thus it is necessary to pass through real solutions here. By Proposition \ref{pr6.11}, the difference between $v_n^1(0)$ and $v_n(0)$ is small. By Proposition \ref{pr6.11} and Proposition \ref{pr3.4}, there exists a global solution $v_n$ to \eqref{eq2.2} with $v_n(0) = \phi_n$ and $S_{\mathbb{R}}(v_n) \lesssim_{\|\phi\|_{L_x^2} } 1$ for $n$ large enough. Moreover,
\begin{align*}
\left\|\Re \left( v_n - v_n^1 \right) \right\|_{L_{t,x}^\frac{2(d+2)}d} \to 0, \text{ as } n\to \infty.
\end{align*}
This together with $\Re v_n^0 = \Re \left( v_n^1 \circ L_{\nu_n}^{-1}\right) $ and \eqref{eq6.22} shows \eqref{eq6.2}.

\appendix
\section{}
In this appendix, we give the detail of the calculation of \eqref{eq4.2v91} and another proof of the important estimate \eqref{eq4.12v107}.
\subsection{The calculation of \eqref{eq4.2v91}}
We now compute the integral
\begin{align*}
C_d = \frac1{\pi 2^{2 + \frac4d}} \int_0^{2\pi} \left| 1 +e^{i \theta} \right|^\frac4d \left( 1 + e^{i \theta} \right) \,\mathrm{d} \theta.
\end{align*}
By easy calculation, we have
\begin{align*}
  \frac1{ \pi 2^{2+ \frac4d}} \int_0^{2\pi} \left| 1 + e^{i \theta} \right|^\frac4d \left( 1 + e^{i\theta}\right) \,\mathrm{d}\theta
=  & \frac{2^\frac2d} { \pi 2^{2 + \frac4d}} \int_{-\pi}^{\pi} ( 1 + \cos\theta)^\frac2d ( 1 + \cos\theta) \,\mathrm{d}\theta\\
= & \frac{1} {2\pi } \int_{0}^{2\pi}  \left| \cos \left(\frac\theta 2\right) \right|^\frac4d \cos^2 \left( \frac\theta 2\right) \,\mathrm{d}\theta,
\end{align*}
where we have used the fact that an integral of an odd function on the interval $[-\pi,\pi]$ is zero.
We have
\begin{align*}
 \frac{1} { 2\pi } \int_{0}^{2\pi}  \left|\cos \left(\frac\theta 2\right)\right|^\frac4d \cos^2 \left(\frac\theta 2\right) \,\mathrm{d}\theta
= &  \frac1{\pi} \int_{0}^{\pi} \left|\cos \theta \right|^\frac4d \cos^2 \theta \,\mathrm{d}\theta \\
= & \frac1{2\pi} \int_{-\pi}^\pi \left|\cos \theta\right|^{ ( \frac4d + 1) - 1} \cos\theta\cos \theta \,\mathrm{d}\theta
= \frac1{\sqrt{\pi}} \frac{\Gamma \left( \frac2d + \frac32 \right)  }{ \Gamma \left( \frac2d + 2  \right)},
\end{align*}
where we use the Proposition A. 1 in \cite{MMU}. Therefore,
\begin{align*}
C_d = \frac1{\sqrt{\pi}} \frac{\Gamma \left( \frac2d + \frac32 \right)  }{ \Gamma \left( \frac2d + 2  \right)}.
\end{align*}

\subsection{Another proof of \eqref{eq4.12v107}}
In this subsection, we give another proof of \eqref{eq4.12v107} in Theorem \ref{th6.2} by the argument in \cite{MN}. By direct calculation, we have
\begin{align*}
 \frac1{2\pi} \int_0^{2\pi}  f\left(w + e^{i \theta} \bar{w} \right) \,\mathrm{d} \theta = 2^{1 + \frac4d} C_d f(w).
\end{align*}
Then, we have
\[	e_{3,n} (t,x) = \mu 2^{-\frac4d - 1} \lambda_n^{-\frac{d}2-2}(\langle \lambda_n^{-1} \nabla \rangle^{-1} \mathcal{E}_{3,n}) \left( \frac{t}{\lambda_n^2}, \frac{x}{\lambda_n} \right),\]
where
\begin{align*}
\mathcal{E}_{3,n} (\tau, y)	&{}=  e^{-i\lambda_n^2 \tau} \left( f \left( w_n(\tau,y) + e^{2i\lambda_n^2 \tau}\overline{w_n(\tau,y)} \right)
	- \frac1{2\pi} \int_0^{2\pi} f \left( w_n(\tau,y) + e^{i \theta }\overline{w_n(\tau,y)} \right) d\theta \right).
\end{align*}
By changing of variables and by the  $L^2$-unitary property of $e^{-it \langle \lambda_n^{-1} \nabla \rangle}$, we have
\begin{align*}
  & \left\|\int_0^t e^{-i(t-s) \langle \nabla \rangle} e_{3,n}(s) \,\mathrm{d}s \right\|_{L_t^\infty H_x^\frac12 \left( \left[-\lambda_n^2 T, \lambda_n^2 T \right] \times \mathbb{R}^d \right)}\\
 =  &   2^{-\frac4d - 1}  \Bigg\| \langle \lambda_n^{-1} \nabla \rangle^{-\frac12 }  \int_0^t e^{i\lambda_n^2 \tau \langle \lambda_n^{-1} \nabla \rangle }
\mathcal{E}_{3,n}(\tau) \,\mathrm{d}\tau \Bigg\|_{L_t^\infty L_x^2( [-  T,  T] \times \mathbb{R}^d )} .
\end{align*}
A computation gives us
\begin{align*}
\mathcal{E}_{3,n} (\tau)&{}=  e^{-i\lambda_n^2 \tau}  \int_0^{1} f \left( w_n(\tau) + e^{2\pi i \frac{\lambda_n^2 }{\pi} \tau }\overline{w_n(\tau)} \right)
-  f \left( w_n(\tau) + e^{2\pi i \left(\theta +\frac{\lambda_n^2}{\pi} \tau\right)}\overline{w_n(\tau)} \right) d\theta \\
&{}=-  e^{-i\lambda_n^2 \tau} \int_0^{1} \int_0^\theta \partial_\eta  \left( f \left( w_n(\tau) + e^{2\pi i \left(\eta +\frac{\lambda_n^2}{\pi} \tau \right)}\overline{w_n(\tau)} \right)\right)  d\eta d\theta \\
&{}=-  e^{-i\lambda_n^2 \tau} \int_0^{1} (1-\eta ) \partial_\eta  \left( f \left( w_n(\tau) + e^{2\pi i \left(\eta +\frac{\lambda_n^2}{\pi} \tau\right)}\overline{w_n(\tau)} \right) \right) d\eta .
\end{align*}
Combining the above identities, we have
\begin{align*}
& \left\|\int_0^t e^{-i(t-s) \langle \nabla \rangle} e_{3,n}(s) \,\mathrm{d}s \right\|_{L_t^\infty H_x^\frac12 \left( \left[-\lambda_n^2 T, \lambda_n^2 T \right] \times \mathbb{R}^d \right)}\\
 = &   {2^{-\frac4d - 1}} \bigg\| \int_{\mathbb{R}} \int_0^1 ( 1- \eta)\langle \lambda_n^{-1} \nabla \rangle^{-\frac12}  \partial_\eta \left( g \left( \tau, \frac{\lambda_n^2}\pi \tau + \eta \right)\right) \,\mathrm{d}\eta \mathrm{d}\tau \bigg\|_{L_t^\infty L_x^2( [-  T,  T] \times \mathbb{R}^d )},
\end{align*}
where
\begin{align*}
g(\tau, \theta) = \chi_{[0,t]}(\tau) e^{i \lambda_n^2 \tau \left( \langle \lambda_n^{-1} \nabla \rangle -1 \right)}  f\left(w_n(\tau) + e^{2\pi i \theta} \overline{w_n(\tau)}\right).
\end{align*}
We now use the following identity
\[	\partial_\eta \left( g \left( \tau, \frac{\lambda_n^2}\pi \tau + \eta\right)\right)
	= \frac{\pi}{\lambda_n^2} \partial_\tau \left( g \left( \tau, \frac{\lambda_n^2}\pi \tau + \eta\right)\right)
	- \frac{\pi}{\lambda_n^2} (\partial_\tau g) \left( \tau, \frac{\lambda_n^2}\pi \tau + \eta\right)\]
to get the estimate
\begin{equation}\label{eq:e3n1}
\begin{aligned}
& \left\|\int_0^t e^{-i(t-s) \langle \nabla \rangle} e_{3,n}(s) \,\mathrm{d}s \right\|_{L_t^\infty H_x^\frac12 \left( \left[-\lambda_n^2 T, \lambda_n^2 T \right] \times \mathbb{R}^d  \right)}\\
 &\lesssim \lambda_n^{-2} \bigg\| \int_{\mathbb{R}} \int_0^1 ( 1- \eta)\langle \lambda_n^{-1} \nabla \rangle^{-\frac12} g_\tau \left( \tau, \frac{\lambda_n^2}\pi \tau + \eta \right) \,\mathrm{d}\eta \mathrm{d}\tau \bigg\|_{L_t^\infty L_x^2( [-  T,  T] \times \mathbb{R}^d )}\\
&\lesssim \lambda_n^{-2} \sup_{\theta \in \mathbb{R}} \bigg\| \int_{\mathbb{R}} \left| g_\tau ( \tau, \theta ) \right| \, \mathrm{d}\tau \bigg\|_{L_t^\infty L_x^2( [-  T,  T] \times \mathbb{R}^d )},
\end{aligned}
\end{equation}
where we have used the Minkowski inequality and the uniform boundedness of $\langle \lambda_n^{-1}\nabla \rangle^{- \frac12}$ in $L^2$
to obtain the last line. By direct computation, we have
\begin{equation}\label{eq:gtau}
\begin{aligned}
g_\tau(\tau, \theta)  = &\left( \delta(\tau) - \delta(\tau -t)\right) e^{i \lambda_n^2 \tau \left( \langle \lambda_n^{-1} \nabla \rangle - 1 \right)} f\left(w_n(\tau) + e^{2\pi i \theta} \overline{w_n(\tau)}\right) \\
&\  + i\lambda_n^2 \chi_{[0,t]}(\tau)   \left( \langle \lambda_n^{-1} \nabla \rangle - 1\right)  e^{i\lambda_n^2 \tau(
 \langle \lambda_n^{-1} \nabla \rangle - 1)} f\left(w_n(\tau) + e^{2\pi i \theta} \overline{w_n(\tau)}\right) \\
& \ + \chi_{[0,t]}(\tau) e^{i\lambda_n^2  \tau ( \langle \lambda_n^{-1} \nabla \rangle - 1)}
 \left( (\partial_z f)\left( w_n(\tau) + e^{2\pi i \theta} \overline{w_n(\tau)}\right) \left(\partial_\tau {w}_n(\tau) + e^{2\pi i \theta} \overline{ \partial_\tau {w}_n(\tau)}\right)\right)\\
& \ + \chi_{[0,t]}(\tau) e^{i\lambda_n^2  \tau  \left( \langle \lambda_n^{-1} \nabla \rangle - 1 \right)}
 \left( (\partial_{\overline{z}} f)\left( w_n(\tau) + e^{2\pi i \theta} \overline{w_n(\tau)}\right) \left(\overline{\partial_\tau {w}_n(\tau) }+ e^{-2\pi i \theta}  \partial_\tau {w}_n(\tau)\right)\right).
\end{aligned}
\end{equation}
Thus, by H\"older, the estimate $ \left|\lambda_n^{2} \left(\langle\lambda_n^{-1}\xi\rangle-1 \right) \right|\le \lambda_n |\xi|$, the fact that $w_n$ stands for $P_{\le \lambda_n^{2\theta}} w_n$ which satisfies \eqref{eq6.4} and \eqref{eq5.4v185} for all $s\ge0$ with the doubled $\theta$, and Sobolev, we finally obtain
\begin{align*}
& \lambda_n^{-2} \sup_{\theta \in \mathbb{R}} \bigg\| \int_{\mathbb{R}} \left| g_\tau ( \tau, \theta )  \right| \, \mathrm{d}\tau \bigg\|_{L_t^\infty L_x^2( [-  T,  T] \times \mathbb{R}^d )} \\
& \lesssim \lambda_n^{-2} \left\| f\left(w_n(0,x) + e^{2\pi i \theta} \overline{w_n(0,x)}\right) \right\|_{L_\theta^\infty L_x^2}
+ \lambda_n^{-2} \left\| f\left(w_n(t) + e^{2\pi i \theta} \overline{w_n(t)}\right) \right\|_{L_{\theta, t}^\infty L_x^2}\\
& \quad + T \lambda_n^{-1} \left\| \nabla \left( f\left(w_n(\tau) + e^{2\pi i \theta} \overline{w_n(\tau)}\right) \right)\right\|_{L_{\theta, \tau}^\infty L_x^2}\\
& \quad + T \lambda_n^{-2} \left\|  \left| w_n(\tau) + e^{2\pi i \theta} \overline{w_n(\tau)}\right|^{\frac4d} \left| \partial_\tau {w}_n(\tau) + e^{2\pi i \theta} \overline{\partial_\tau {w}_n(\tau)}\right| \right\|_{L_{\tau, \theta}^\infty L_x^2}\\
& \lesssim \lambda_n^{-2} \|w_n\|_{L_t^\infty H_x^{\frac{2d}{d+4} }}^{1 + \frac4d}
  + \lambda_n^{-1} \|w_n\|_{L_\tau^\infty H_x^{\frac{2d}{d+4} }}^\frac4d \left\|\nabla w_n(\tau) \right\|_{L_\tau^\infty H_x^{\frac{2d}{d+4} }}
+ \lambda_n^{-2}\|w_n \|_{L_{\tau}^\infty H_x^{\frac{2d}{d+4}} }^\frac4d \|\partial_\tau w_n\|_{L_\tau^\infty H_x^{\frac{2d}{d+4}} } \\
& \lesssim \lambda_n^{-1 + 8\theta}.
\end{align*}
Arguing as in \eqref{eq:e3n1}, we also have
\begin{align*}
  &  \left\|\int_0^t e^{-i(t-s) \langle \nabla \rangle} e_{3,n}(s) \,\mathrm{d}s \right\|_{L_{t,x}^\frac{2(d+2)}d \left( \left[- \lambda_n^2 T, \lambda_n^2 T \right] \times \mathbb{R}^d \right)}\\
\lesssim  & \lambda_n^{-2} \bigg\| \langle \lambda_n^{-1} \nabla \rangle^{-1}   \int_{\mathbb{R}} \int_0^1 ( 1- \theta) e^{-i \lambda_n^2 t \langle \lambda_n^{-1} \nabla \rangle} g_\tau\left( \tau, \frac{\lambda_n^2}\pi \tau + \theta\right) \,\mathrm{d}\theta \mathrm{d}\tau
\bigg\|_{L_{t,x}^\frac{2(d+2)}d([-T,T] \times \mathbb{R}^d)}.
\end{align*}
We now estimate each term contributed by $g_\tau$.
By the Strichartz estimate and Sobolev embedding, the contribution from the first line of the right hand side of \eqref{eq:gtau} is bounded by
\begin{align*}
& \lambda_n^{-2}\left\|f\left(w_n(0,x) + e^{2 \pi i \theta } \overline{w_n(0,x)}  \right)  \right\|_{L_{\theta}^\infty L_x^2}
+ \lambda_n^{-2}\left\| f\left(w_n(t,x) + e^{2\pi i \left( \theta + \frac{\lambda_n^2}\pi t\right)} \overline{w_n(t)}  \right) \right\|_{L_\theta^\infty L_{t,x}^\frac{2(d+2)}d([-T,T])} \\
& \lesssim \lambda_n^{-2} \|w_n(0) \|_{H_x^{\frac{2d}{d+4 }}}^{ \frac4d + 1} + \lambda_n^{-2}T^{\frac{d}{2(d+2)}} \| w_n\|_{L_{t}^\infty H_x^{\frac{d(3d+4)}{(d+2)(d+4)}}}^{\frac4d + 1}
\lesssim_T \lambda_n^{-2+4\theta} + \lambda_n^{-2+\frac{2(3d+4)}{d+2}\theta} \lesssim \lambda_n^{-2+6\theta}.
\end{align*}
We now turn to the contribution from the other part of $g_\tau$. Remark that one can apply inhomogeneous Strichartz estimate and then the estimate becomes essentially same as the previous case: it is bounded by
\begin{align*}
& T \lambda_n^{-1} \left\| \nabla \left( f\left(w_n(\tau) + e^{2\pi i \theta} \overline{w_n(\tau)}\right) \right)\right\|_{L_{\theta, \tau}^\infty L_x^2}\\
& \quad + T \lambda_n^{-2} \left\|  \left| w_n(\tau) + e^{2\pi i \theta} \overline{w_n(\tau)}\right|^{\frac4d} \left| \partial_\tau {w}_n(\tau) + e^{2\pi i \theta} \overline{\partial_\tau {w}_n(\tau)}\right| \right\|_{L_{\tau, \theta}^\infty L_x^2}\\
& \lesssim_T \lambda_n^{-1} \|w_n\|_{L_\tau^\infty H_x^{\frac{2d}{d+4} }}^\frac4d \left\|\nabla w_n(\tau) \right\|_{L_\tau^\infty H_x^{\frac{2d}{d+4} }}
+ \lambda_n^{-2}\|w_n \|_{L_{\tau}^\infty H_x^{\frac{2d}{d+4}} }^\frac4d \|\partial_\tau w_n\|_{L_\tau^\infty H_x^{\frac{2d}{d+4}} }
\lesssim \lambda_n^{-1 + 8\theta}.
\end{align*}
Thus, we have finished the proof of \eqref{eq4.12v107}.

\noindent \textbf{Acknowledgments.}
The authors are grateful to Professor Nakanishi for helpful discussion. Xing Cheng wish to thank Professor Stefanov for the interest he has taken in this work and for fruitful discussions.


\begin{thebibliography}{99}
%\bibitem{BV} P. B\'egout and A. Vargas, \emph{Mass concentration phenomena for the $L^2$-critical nonlinear Schr\"odinger
%equation}, Trans. Amer. Math. Soc. {\bf359} (2007), no. 11, 5257-5282.

%\bibitem{Bo1} J. Bourgain, \emph{Refinements of Strichartz inequality and applications to 2d-NLS with critical nonlinearity}, Int. Math. Res. Not. (1998), 253-283.

\bibitem{Br} P. Brenner, \emph{On space-time means and everywhere defined scattering operators for nonlinear Klein-Gordon equations}, Math. Z. {\bf 186}: 3(1984), 383-391.

%\bibitem{CK} R. Carles and S. Keraani, \emph{On the role of quadratic oscillations in nonlinear Schr\"odinger equations. II. The $L^2$-critical case}, Trans. Amer. Math. Soc. {\bf 359} (2007), no. 1, 33-62.
%
%
\bibitem{Ch} X. Cheng, \emph{Scattering for the mass super-critical perturbations of the mass critical nolinear Schr\"odinger equations}, Illinois J. Math. {\bf 64 } (2020), no. 1, 21-48.

\bibitem{D3} B. Dodson, \emph{Global well-posedness and scattering for the defocusing, $L^2$-critical nonlinear Schr\"odinger equation when $d\ge 3$}, J. Amer. Math. Soc. {\bf 25 } (2012), no. 2, 429-463.

\bibitem{D1} B. Dodson, \emph{Global well-posedness and scattering for the defocusing, $L^2$-critical, nonlinear Schr\"odinger equation when $d=2$}, Duke Math. J. {\bf165} (2016), no. 18, 3435-3516.

\bibitem{D2} B. Dodson, \emph{Global well-posedness and scattering for the defocusing, $L^2$-critical, nonlinear Schr\"odinger equation when $d = 1$},  Amer. J. Math. {\bf 138} (2016), no. 2, 531-569.

\bibitem{D4} B. Dodson, \emph{Global well-posedness and scattering for the mass critical nonlinear Schr\"{o}dinger equation with mass below the mass of the ground state},  Adv. Math. {\bf 285} (2015), 1589-1618.

\bibitem{GV} J. Ginibre and G. Velo, \emph{The global Cauchy problem for the nonlinear Klein-Gordon equation}, Math. Z. {\bf 189}: 4(1985), 487-505.

\bibitem{GV1} J. Ginibre and G. Velo, \emph{Time decay of finite energy solutions of the nonlinear Klein-Gordon and Schr\"odinger equations}, Ann. Inst. H. Poincar\'e Phys. Th\'eor. {\bf 43}:4 (1985), 399-442.

\bibitem{IMN} S. Ibrahim, N. Masmoudi, and K. Nakanishi, \emph{Scattering threshold for the focusing nonlinear Klein-Gordon equation}, Anal. PDE {\bf4} (2011), no. 3, 405-460.

\bibitem{IMN1} S. Ibrahim, N. Masmoudi, and K. Nakanishi, \emph{Threshold solutions in the case of mass-shift for the critical Klein-Gordon equation}, Trans. Amer. Math. Soc. {\bf366} (2014), no. 11, 5653-5669.

\bibitem{IIO} M. Ikeda, T. Inui, and M. Okamoto, \emph{Scattering for the one-dimensional Klein-Gordon equation with exponential nonlinearity}, J. Hyperbolic Differ. Equ. {\bf 17 } (2020), no. 2, 295-354.

\bibitem{In} T. Inui, \emph{Scattering and blow-up for the focusing nonlinear Klein-Gordon equation with complex-valued data}, Ann. Henri Poincar\'e {\bf18} (2017), no. 1, 307-343.

\bibitem{JL} {L. Jeanjean and S. Le Coz, \emph{Instability for standing waves of nonlinear Klein-Gordon equations via mountain-pass arguments}, Trans. Amer. Math. Soc. {\bf 361 } (2009), no. 10, 5401-5416}

\bibitem{KM} C. E. Kenig and F. Merle, \emph{Global well-posedness, scattering and blow-up for the energy-critical, focusing, non-linear Schr\"odinger equation in the radial case}, Invent. Math. {\bf 166} (2006), no. 3, 645-675.

\bibitem{KM1} C. E. Kenig and F. Merle, \emph{Global well-posedness, scattering and blow-up for the energy-critical, focusing, nonlinear wave equation}, Acta. Math. {\bf201} (2008), 147-212.

\bibitem{KSV1} R. Killip, B. Stovall, and M. Visan, \emph{Scattering for the cubic Klein-Gordon equation in two space dimensions}, Trans. Amer. Math. Soc. {\bf364} (2012), no. 3, 1571-1631.

\bibitem{KTV} {R. Killip, T. Tao, and M. Visan, \emph{The cubic nonlinear Schr\"odinger equation in two dimensions with radial data}, J. Eur. Math. Soc. (JEMS) {\bf 11} (2009), no. 6, 1203-1258. }

\bibitem{Killip-Visan1} R. Killip and M. Visan, \emph{Nonlinear Schr\"odinger equations at critical regularity}. Proceedings for the Clay summer school ``Evolution Equations'', Eidgen\"ossische technische Hochschule, Z\"urich, 2008.

\bibitem{KVZ}  {R. Killip, M. Visan, and X. Zhang, \emph{The mass-critical nonlinear Schr\"odinger equation with radial data in dimensions three and higher}, Anal. PDE {\bf 1} (2008), no. 2, 229-266.}

\bibitem{KNS} J. Krieger, N. Nakanishi, and W. Schlag, \emph{Global dynamics above the ground state energy for the one-dimensional NLKG equation}, Math. Z. {\bf272} (2012), no. 1-2, 297-316.

\bibitem{MNO} S. Machihara, K. Nakanishi, and T. Ozawa, \emph{Nonrelativistic limit in the energy space for nonlinear Klein-Gordon equations}, Math. Ann. {\bf322} (2002), no. 3, 603-621.

\bibitem{MSW} B. Marshall, W. Strauss, and S. Wainger, \emph{$L^p-L^q$ estimates for the Klein-Gordon equation}, J. Math. Pures Appl. (9) {\bf 59 } (1980), no. 4, 417-440.

\bibitem{MMU} S. Masaki, H. Miyazaki, and K. Uriya, \emph{Long-range scattering for nonlinear Schr\"odinger equations with critical homogeneous nonlinearity in three space dimensions}, Trans. Amer. Math. Soc. {\bf 371} (2019), no. 11, 7925-7947.

\bibitem{MS0} S. Masaki and J.-I. Segata, \emph{Modified scattering for the Klein-Gordon equation with the critical nonlinearity in three dimensions}, Commun. Pure Appl. Anal. {\bf 17 } (2018), no. 4, 1595-1611.

\bibitem{MS}  {S. Masaki and J.-I. Segata, \emph{Modified scattering for the quadratic nonlinear Klein-Gordon equation in two dimensions}, Trans. Amer. Math. Soc. {\bf370} (2018), no. 11, 8155-8170.}

\bibitem{MSU} S. Masaki, J.-I. Segata, and K. Uriya, \emph{Long range scattering for the complex-valued Klein-Gordon equation with quadratic nonlinearity in two dimensions}, J. Math. Pures Appl. (9) {\bf 139 } (2020), 177-203.

\bibitem{MN} N. Masmoudi and K. Nakanishi, \emph{From nonlinear Klein-Gordon equation to a system of coupled nonlinear Schr\"odinger equations}, Math. Ann. {\bf324} (2002), no. 2, 359-389.

%\bibitem{MV} F. Merle and L. Vega, \emph{Compactness at blow-up time for $L^2$ solutions of the critical nonlinear Schr\"odinger equation in 2D}, Internat. Math. Res. Notices(1998), no. 8, 399-425.

\bibitem{N-2} K. Nakanishi, \emph{Scattering theory for the nonlinear Klein-Gordon equation with Sobolev critical power}, Internat. Math. Res. Notices 1999, no. 1, 31-60.

\bibitem{N-1} K. Nakanishi, \emph{Energy scattering for nonlinear Klein-Gordon and Schr\"odinger equations in spatial dimensions 1 and 2}, J. Funct. Anal. {\bf169} (1999), no. 1, 201-225.

\bibitem{N} K. Nakanishi, \emph{Nonrelativistic limit of scattering theory for nonlinear Klein-Gordon equations}, J. Differential Equations {\bf180} (2002), no. 2, 453-470.

\bibitem{N1} K. Nakanishi, \emph{Transfer of global wellposedness from nonlinear Klein-Gordon equation to nonlinear Schr\"odinger equation}, Hokkaido Math. J. {\bf37} (2008), no. 4, 749-771.

\bibitem{NS1} K. Nakanishi and W. Schlag, \emph{Global dynamics above the ground state energy for the focusing nonlinear Klein-Gordon equation}, J. Differential Equations {\bf250} (2011), no. 5, 2299-2333.

\bibitem{NS2} K. Nakanishi and W. Schlag, \emph{Invariant manifolds and dispersive Hamiltonian evolution equations}, Zurich Lectures in Advanced Mathematics. European Mathematical Society (EMS), Z\"urich, 2011. vi+253 pp. ISBN: 978-3-03719-095-1.

\bibitem{NS4} K. Nakanishi and W. Schlag, \emph{Global dynamics above the ground state for the nonlinear Klein-Gordon equation without a radial assumption}, Arch. Ration. Mech. Anal. {\bf203} (2012), no. 3, 809-851.


\bibitem{PS} L. E. Payne and D. H. Sattinger, \emph{Saddle points and instability of nonlinear hyperbolic equations}, Israel J. Math. {\bf22} (1975), no. 3-4, 273-303.

%\bibitem{SS} J. Shatah and M. Struwe, \emph{Geometric wave equations}, vol. 2 of Courant Lecture Notes in Mathematics, New York University Courant Institute of Mathematical Sciences, New York, 2000.

\bibitem{Str}  W. Strauss, \emph{Nonlinear wave equations}, CBMS Regional Conference Series in Mathematics, {\bf 73}. Published for the Conference Board of the Mathematical Sciences, Washington, DC; by the American Mathematical Society, Providence, RI, 1989. x+91 pp. ISBN: 0-8218-0725-0.


\bibitem{T1} T. Tao, \emph{A sharp bilinear restriction estimate for paraboloids}, Geom. Funct. Anal. {\bf 13}(2003), 1359-1384.

\bibitem{T2} T. Tao, \emph{Nonlinear dispersive equations: local and global analysis}, CBMS Regional Conference Series in Mathematics, 106. American Mathematical Society, Providence, R.I., 2006.

\bibitem{TVZ0} T. Tao, M. Visan, and X. Zhang, \emph{Global well-posedness and scattering for the defocusing mass-critical nonlinear Schr\"odinger equation for radial data in high dimensions}, Duke Math. J. {\bf 140} (2007), no. 1, 165-202.

\bibitem{TVZ} {T. Tao, M. Visan, and X. Zhang,  \emph{Minimal-mass blowup solutions of the mass-critical NLS}, Forum Math. {\bf 20} (2008), no. 5, 881-919}

\bibitem{Z} {J. Zhang, \emph{Sharp conditions of global existence for nonlinear Schr\"odinger and Klein-Gordon equations}, Nonlinear Anal. {\bf 48 } (2002), no. 2, Ser. A: Theory Methods, 191-207.}
\end{thebibliography}
\end{document}